\ifpdf \DeclareGraphicsRule{*}{eps}{*}{} \fi
\theoremstyle{definition}
\newtheorem{definition}{Definition}[section]
\theoremstyle{remark}
\newtheorem{remark}[definition]{Remark}
\newtheoremstyle{theorem}{2mm}{2mm}{\slshape}{}{\bfseries}{}{ }{\thmname{#1} \thmnumber{#2}. \thmnote{ #3}}
\theoremstyle{theorem}
\newtheorem{theorem}[definition]{Theorem}
\newtheorem{corollary}[definition]{Corollary}
\newtheorem{lemma}[definition]{Lemma}
\newtheorem{proposition}[definition]{Proposition}
\newtheoremstyle{mystyle}{2mm}{2mm}{}{}{\bfseries}{}{ }{\thmnumber{#2}.\thmnote{ #3}}
\theoremstyle{mystyle}
\newtheorem{fact}[definition]{}
\newcommand{\sv}[1]{\mbox{\fontsize{9}{11}\selectfont $#1$}}
\newcommand{\sa}[1]{\mbox{\fontsize{7}{7}\selectfont $#1$}}
\begin{document}
\title{Factorizable enriched categories and applications}
\author{Aura B\^{a}rde\c{s}}
\author{Drago\c{s} \c{S}tefan}
\address{University of Bucharest, Faculty of Mathematics and Informatics,
Bucharest, 14 Academiei Street, Ro-010014, Romania.}
\email{aura\_bardes@yahoo.com}
\address{University of Bucharest, Faculty of Mathematics and Informatics,
Bucharest, 14 Academiei Street, Ro-010014, Romania.}
\email{drgstf@gmail.com}
\subjclass[2000]{Primary 18D20; Secondary 18D10; 16Sxx}
\date{}

\begin{abstract}
We define the twisted tensor product of two enriched categories, which
generalizes various sorts of `products' of algebraic structures, including
the bicrossed product of groups, the twisted tensor product of (co)algebras
and the double cross product of bialgebras. The key ingredient in the
definition is the notion of simple twisting systems between two enriched
categories. To give examples of simple twisted tensor products we introduce
matched pairs of enriched categories. Several other examples related to
ordinary categories, posets and groupoids are also discussed.
\end{abstract}

\keywords{Enriched category, twisting system, twisted tensor product,
matched pair, bicrossed product.}
\maketitle
\tableofcontents

\section*{Introduction}

The most convenient way to explain what we mean by the factorization problem
of an algebraic structure is to consider a concrete example. Chronologically
speaking, the first problem of this type was studied for groups, see for
instance \cite{Mai,Ore,Za,Sze,Tak}. Let $G$ be a group. Let $H$ and $K$
denote two subgroups of $G.$ One says that $G$ factorizes through $H$ and $K$
if $G=HK$ and $H\cap K=1.$ Therefore, the factorization problem for groups
means to find necessary and sufficient conditions which ensure that $G$
factorizes through the given subgroups $H$ and $K$. Note that, if $G$
factorizes through $H$ and $K$ then the multiplication induces a canonical
bijective map $\varphi :H\times K\rightarrow G,$ which can be used to
transport the group structure of $G$ on the Cartesian product of $H$ and $K.$
We shall call the resulting group structure the bicrossed product of $H$ and
$K$, and we shall denote it by $H\Join K$. The identity element of $H\Join K$
is $(1,1)$, and its group law is uniquely determined by the `twisting' map
\begin{equation*}
R:K\times H\rightarrow H\times K,\quad R(k,h):=\varphi ^{-1}(kh).
\end{equation*}%
Obviously, $R$ is induced by a couple of functions $\triangleright :K\times
H\rightarrow H$ and $\triangleleft :K\times H\rightarrow K$ such that $%
R(k,h)=(k\triangleright h,k\triangleleft h).$ Using this notation the
multiplication on $H\Join K$ can be written as
\begin{equation*}
(h,k)\cdot (h^{\prime },k^{\prime })=\left( h(k\triangleright h^{\prime
}),(k\triangleleft h^{\prime })k^{\prime }\right) .
\end{equation*}%
The group axioms easily imply that $(H,K,\triangleright ,\triangleleft )$ is
a matched pair of groups, in the sense of \cite{Tak}. Conversely, any
bicrossed product $H\Join K$ factorizes through $H$ and $K.$ In conclusion,
a group $G$ factorizes through $H$ and $K$ if and only if it is isomorphic
to the bicrossed product $H\Join K$ associated to a certain matched pair $%
(H,K,\triangleright ,\triangleleft ).$

Similar `products' are known in the literature for many other algebraic
structures. In \cite{Be}, for a distributive law $\lambda:G\circ
F\rightarrow F\circ G$ between two monads, Jon Beck defined a monad
structure on $F\circ G,$ which can be regarded as a sort of bicrossed
product of $F$ and $G$ with respect to the twisting natural transformation $%
\lambda.$

The twisted tensor product of two $\mathbb{K}$-algebras $A$ and $B$ with
respect to a $\mathbb{K}$-linear twisting map $R:B\otimes _{\mathbb{K}%
}A\rightarrow A\otimes _{\mathbb{K}}B$ was investigated for instance in \cite%
{Ma1}, \cite{Tam}, \cite{CSV}, \cite{CIMZ}, \cite{LPvO} and \cite{JLPvO}. It
is the analogous in the category of associative and unital algebras of the
bicrossed product of groups. The classical tensor product of two algebras,
the graded tensor product of two graded algebras, skew algebras, smash
products, Ore extensions, generalized quaternion algebras, quantum affine
spaces and quantum tori are all examples of twisted tensor products.

Another class of examples, including the Drinfeld double and the double
crossed product of a matched pair of bialgebras, comes from the theory of
Hopf algebras, see \cite{Ma2}. Some of these constructions have been
generalized for bialgebras in monoidal categories \cite{BD} and bimonads
\cite{BV}.

Enriched categories have been playing an increasingly important role not
only in Algebra, but also in Algebraic Topology and Mathematical Physics,
for instance. They generalize usual categories, linear categories, Hopf
module categories and Hopf comodule categories. Monoids, algebras,
coalgebras and bialgebras may be regarded as enriched categories with one
object.

Our aim in this paper is to `categorify' the factorization problem, i.e. to
answer the question when an enriched category factorizes through a couple of
enriched subcategories. Finding a solution at this level of generality would
allow us to approach in an unifying way all factorization problems that we
have already mentioned. Moreover, it would also provide a general method for
producing new non-trivial examples of enriched categories.

In order to define factorizable enriched categories, we need some notation.
Let $\boldsymbol{C}$ be a small enriched category over a monoidal category $(%
\boldsymbol{M},\otimes ,\mathbf{1}).$ Let $S$ denote the set of objects in $%
\boldsymbol{C}$. For the hom-objects in $\boldsymbol{C}$ we use the notation
$_{x}C_{y}.$ The composition of morphisms and the identity morphisms in $%
\boldsymbol{C}$ are defined by the maps $_{x}c_{z}^{y}:{}_{x}C_{y}\otimes
{}_{y}C_{z}\rightarrow {}_{x}C_{z}$ and $1_{x}:\mathbf{1}\rightarrow
{}_{x}C_{x},$ respectively. For details, the reader is referred to the next
section. We assume that $\boldsymbol{A}$ and $\boldsymbol{B}$ are two
enriched subcategories of $\boldsymbol{C}$. The inclusion functor $%
\boldsymbol{\alpha} :\boldsymbol{A}\rightarrow \boldsymbol{C}$ is given by a
family $\{_{x}\alpha _{y}\}_{x,y\in S}$ of morphisms in $\boldsymbol{M},$
where $_{x}\alpha _{y}:{}_{x}{A}_{y}\rightarrow {}${}$_{x}C_{y}.$ If $%
\boldsymbol{\beta }$ is the inclusion of $\boldsymbol{B}$ in $\boldsymbol{C}$%
, then for $x,$ $y$ and $u$ in $S$ we define%
\begin{equation*}
_{x}\varphi _{y}^{u}:{}_{x}A_{u}\otimes {}_{u}B{_{y}\rightarrow {}_{x}{C}%
_{y},\quad }_{x}\varphi _{z}^{y}={}_{x}c_{y}^{u}\circ (_{x}{\alpha }%
_{u}\otimes {}_{u}{\beta }_{y}).
\end{equation*}%
{Assuming that all }$S$-indexed families of objects in $\boldsymbol{M}$ have
a coproduct it follows that the maps $\{_{x}\varphi _{y}^{u}\}_{u\in S}$
yield a unique morphism
\begin{equation*}
_{x}\varphi _{y}:{}\textstyle\bigoplus\limits_{u\in S}{}_{x}A_{u}\otimes
{}_{u}B{_{y}\rightarrow {}_{x}{C}_{y}.}
\end{equation*}%
We say that $\boldsymbol{C}$ {factorizes} {through} $\boldsymbol{A} $ {and} $%
\boldsymbol{B}$ if and only if all $_{x}\varphi _{y}$ are invertible. An
enriched category $\boldsymbol{C}$ is called {factorizable} if it factorizes
through $\boldsymbol{A}$ and $\boldsymbol{B},$ for some $\boldsymbol{A}$ and
$\boldsymbol{B}$.

In Theorem \ref{teo1}, our first main result, under the additional
assumption that the tensor product on $\boldsymbol{M}$ is distributive over
the direct sum, we show that to every $\boldsymbol{M}$-category $\boldsymbol{%
C}$ that factorizes through $\boldsymbol{A} $ and $\boldsymbol{B}$
corresponds a twisting system between $\boldsymbol{B}$ and $\boldsymbol{A},$
that is a family ${R}:=\{_{x}R_{z}^{y}\}_{x,y,z\in S}$ of morphisms
\begin{equation*}
_{x}R_{z}^{y}:{}_{x}B_{y}\otimes {}_{y}A{_{z}\rightarrow } \textstyle%
\bigoplus\limits_{u\in S}{}_{x}A_{u}\otimes {}_{u}B{_{z}}
\end{equation*}%
which are compatible with the composition and identity maps in $\boldsymbol{A%
} $ and $\boldsymbol{B}$ in a certain sense.

Trying to associate to a twisting system ${R}:=\{_{x}R_{z}^{y}\}_{x,y,z\in
S} $ an $\boldsymbol{M}$-category we encountered some difficulties due to
the fact that, in general, the image of $_{x}R_{z}^{y}$ is too big.
Consequently, in this paper we focus on the particular class of twisting
systems for which there is a function $\left\vert \cdots \right\vert
:S\times S\times S\rightarrow S$ such that the image of $_{x}R_{z}^{y}$ is
included into $_{x}A_{\left\vert xyz\right\vert }\otimes {}_{|xyz|}B{_{z},}$
for every $x,y,z\in S$. These twisting systems are characterized in
Proposition \ref{thm R simplu}. A more precise description of them is given
in Corollary \ref{cor: R simplu1}, provided that $\boldsymbol{M}$ satisfies
an additional condition (\dag ), see \S \ref{fa:dag}. A similar result is
obtained in Corollary \ref{cor R simplu2} for a linear monoidal category.

In this way we are led in \S \ref{STS} to the definition of simple twisting
systems. For such a twisting system ${R}$ between $\boldsymbol{B}$ and $%
\boldsymbol{A},$ in Theorem \ref{thm:TTP} we construct an $\boldsymbol{M}$%
-category $\boldsymbol{A}\otimes _{{R}}\boldsymbol{B}$ which factorizes
through $\boldsymbol{A}$ and $\boldsymbol{B}.$ Since it generalizes the
twisted tensor product of algebras, $\boldsymbol{A}\otimes _{{R}}\boldsymbol{%
B}$ will be called the twisted tensor product of $\boldsymbol{A}$ and $%
\boldsymbol{B}.$

In the third section we consider the case when $\boldsymbol{M}$ is the
monoidal category of coalgebras in a braided category $\boldsymbol{M}%
^{\prime }$. In this setting, we prove that there is an one-to-one
correspondence between simple twisting systems and matched pair of enriched
categories, see \S \ref{fa:MP} for the definition of the latter notion. We
shall refer to the twisted tensor product of a matched pair as the bicrossed
product. By construction, the bicrossed product is a category enriched over $%
\boldsymbol{M^{\prime }}$, but we prove that it is enriched over $%
\boldsymbol{M}$ as well.

More examples of twisted tensor products of enriched categories are given in
the last part of the paper. By definition, usual categories are enriched
over $\boldsymbol{Set},$ the category of sets. Actually, they are enriched
over the monoidal category of coalgebras in $\boldsymbol{Set}$. Hence,
simple twisting systems and matched pairs are equivalent notions for usual
categories. Moreover, if $\boldsymbol{A}$ and $\boldsymbol{B}$ are thin
categories (that is their hom-sets contain at most one morphism), then we
show that any twisting system between $\boldsymbol{B}$ and $\boldsymbol{A}$
is simple, so it corresponds to a uniquely determined matched pair of
categories. We use this result to investigate the twisting systems between
two posets.

Our results may be applied to algebras in a monoidal category $\boldsymbol{M}
$, which are enriched categories with one object. Therefore, we are also
able to recover all bicrossed product constructions that we discussed at the
beginning of this introduction.

Finally, we prove that the bicrossed product of two groupoids is also a
groupoid, and we give an example of factorizable groupoid with two objects.

\section{Preliminaries and notation.}

Mainly for fixing the notation and the terminology, in this section we
recall the definition of enriched categories, and then we give some example
that are useful for our work.

\begin{fact}[Monoidal categories.]
Throughout this paper $(\boldsymbol{M},\otimes ,\mathbf{1},a,l,r)$ will
denote a monoidal category with associativity constraints $a_{X,Y,Z}:\left(
X\otimes Y\right) \otimes Z\rightarrow X\otimes \left( Y\otimes Z\right) $
and unit constraints $l_{X}:\mathbf{1}\otimes X\rightarrow X$ and $%
r_{X}:X\otimes \mathbf{1}\rightarrow X.$ The class of objects of $%
\boldsymbol{M}$ will be denoted by $M_{0}$. Mac Lane's Coherence Theorem
states that given two parenthesized tensor products of some objects $%
X_{1},\dots ,X_{n}$ in $\boldsymbol{M}$ (with possible arbitrary insertions
of the unit object $\boldsymbol{1}$) there is a unique morphism between them
that can be written as a composition of associativity and unit constraints,
and their inverses. Consequently, all these parenthesized tensor products
can be identified coherently, and the parenthesis, associativity constraints
and unit constraints may be omitted in computations. Henceforth, we shall
always ignore them. The identity morphism of an object $X$ in $\boldsymbol{M}
$ will be denoted by the same symbol $X.$

By definition, the tensor product is a functor. In particular, for any
morphisms $f^{\prime }:X^{\prime }\rightarrow Y^{\prime \prime }$ and $%
f^{\prime \prime }:X^{\prime \prime }\rightarrow Y^{\prime \prime }$ in $%
\boldsymbol{M}$ the following equations hold%
\begin{equation}  \label{ec:TP=functor}
\left( f^{\prime }\otimes Y^{\prime \prime }\right) \circ \left( X^{\prime
}\otimes f^{\prime \prime }\right) =f^{\prime }\otimes f^{\prime \prime
}=\left( Y^{\prime }\otimes f^{\prime \prime }\right) \circ \left( f^{\prime
}\otimes X^{\prime \prime }\right) .
\end{equation}

If the coproduct of a family $\{X_{i}\}_{i\in I}$ of objects in $\boldsymbol{%
M}$ exists, then it will be denoted as a pair $\left( \textstyle%
\bigoplus\limits {}_{i\in I}X_{i},\left\{ \sigma _{i}\right\} _{i\in
I}\right) ,$ where the maps $\sigma _{i}:X_{i}\rightarrow \textstyle%
\bigoplus\limits {}_{i\in I}X_{i}$ are the canonical inclusions.
\end{fact}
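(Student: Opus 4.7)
The plan is to derive equation~(\ref{ec:TP=functor}) directly from the stated fact that $\otimes:\boldsymbol{M}\times\boldsymbol{M}\to\boldsymbol{M}$ is a bifunctor, combined with the notational convention just introduced in the excerpt, namely that in a mixed expression such as $X'\otimes f''$ the symbol $X'$ is shorthand for the identity morphism of the object $X'$. Once this convention is unpacked, both halves of the displayed equation become instances of the compatibility of a functor with composition, applied to two natural factorizations of the same morphism in the product category $\boldsymbol{M}\times\boldsymbol{M}$.

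Concretely, the first step is to observe that in $\boldsymbol{M}\times\boldsymbol{M}$ the morphism $(f',f''):(X',X'')\to(Y',Y'')$ admits the two factorizations
$$(f',f'')=(f',\mathrm{id}_{Y''})\circ(\mathrm{id}_{X'},f'')=(\mathrm{id}_{Y'},f'')\circ(f',\mathrm{id}_{X''}).$$
The second step is to apply the functor $\otimes$ to each factorization; because a functor preserves identities and composition, and because $\otimes(g',g'')$ is written $g'\otimes g''$, this immediately yields
$$f'\otimes f''=(f'\otimes Y'')\circ(X'\otimes f'')=(Y'\otimes f'')\circ(f'\otimes X''),$$
which is exactly the asserted identity.

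I do not foresee any genuine obstacle: the argument is entirely formal and uses nothing about $\boldsymbol{M}$ beyond the functoriality of $\otimes$ in its two arguments; in particular, the associativity and unit constraints play no role, since no re-bracketing is performed. The one point that deserves a moment's care is purely notational, namely that the juxtaposition symbol $\otimes$ is being used both for the action of the bifunctor on pairs of objects and on pairs of morphisms; under the convention recalled above, the two equalities hold literally in this notation rather than merely up to an identification.
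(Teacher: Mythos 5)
Your proposal is correct and matches the paper's own treatment: the paper simply asserts that \eqref{ec:TP=functor} holds ``by definition, the tensor product is a functor,'' and your argument---factoring $(f',f'')$ through $(f',\mathrm{id}_{Y''})\circ(\mathrm{id}_{X'},f'')$ and $(\mathrm{id}_{Y'},f'')\circ(f',\mathrm{id}_{X''})$ in $\boldsymbol{M}\times\boldsymbol{M}$ and applying functoriality of $\otimes$---is exactly the standard unpacking of that one-line justification. You also correctly read the paper's convention that an object symbol in a tensor expression denotes its identity morphism (and implicitly repair the typo $f':X'\to Y''$, which should be $f':X'\to Y'$).
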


\begin{fact}[The opposite monoidal category.]
\label{CatMonOpusa} If $(\boldsymbol{M},\otimes ,\mathbf{1,}a,l,r)$ is a
monoidal category, then one constructs the monoidal category $(\boldsymbol{M}%
^{o},\otimes ^{o},\mathbf{1}^{o},a^{o},l^{o},r^{o})$ as follows. By
definition, $\boldsymbol{M}^{o}$ and $\boldsymbol{M}$ share the same objects
and identity morphisms. On the other hand, for two objects $X,Y$ in $%
\boldsymbol{M}$, one takes $\mathrm{Hom}_{\boldsymbol{M}^{o}}(X,Y):={}%
\mathrm{Hom}_{\boldsymbol{M}}(Y,X)$. The composition of morphisms in $%
\boldsymbol{M}^{o}$
\begin{equation*}
\bullet :\mathrm{Hom}_{\boldsymbol{M}^{o}}(Y,Z)\times \mathrm{Hom}_{%
\boldsymbol{M}^{o}}(X,Y){}\rightarrow \mathrm{Hom}_{\boldsymbol{M}%
^{o}}(X,Z){}
\end{equation*}%
is defined by the formula $f\bullet g:=g\circ f,$ for any $f:Z\rightarrow Y$
and $g:Y\rightarrow X$ in $\boldsymbol{M}.$ The monoidal structure is
defined by $X\otimes ^{o}Y=X\otimes Y$ and $\boldsymbol{1}^{o}=\boldsymbol{1}
$. The associativity and unit constraints in $\boldsymbol{M}^{o}$ are given
by $a^{o}_{X,Y,Z}=a_{X,Y,Z}^{-1},$ $l^{o}=l_{X}^{-1}$ and $r^{o}=r_{X}^{-1}.$
If, in addition $\boldsymbol{M}$ is braided monoidal, with braiding $\chi
_{X,Y}:X\otimes Y\rightarrow Y\otimes X$ then $\boldsymbol{M}^{o}$ is also
braided, with respect to the braiding $\chi ^{o}$ defined by $\chi
^{o}_{X,Y}:=( \chi _{X,Y})^{-1}.$
\end{fact}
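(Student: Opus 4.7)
The plan is to verify the claim in four stages, each reducing to the corresponding axiom for $(\boldsymbol{M},\otimes,\mathbf{1},a,l,r)$ by reversing arrows. First, I would check that $\boldsymbol{M}^{o}$ is a well-defined category. Associativity of $\bullet$ follows because, for composable $f,g,h$ in $\boldsymbol{M}^{o}$, one computes $(f\bullet g)\bullet h = h\circ(g\circ f) = (h\circ g)\circ f = f\bullet(g\bullet h)$ from the associativity of $\circ$ in $\boldsymbol{M}$. The identity morphism of $X$ in $\boldsymbol{M}^{o}$ is the same as in $\boldsymbol{M}$, and the unit laws transfer immediately.

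Next, I would show that $\otimes^{o}$ is a bifunctor. On objects we set $X\otimes^{o} Y := X\otimes Y$. For morphisms $f:X'\to Y'$ and $g:X''\to Y''$ in $\boldsymbol{M}^{o}$ (i.e.\ $f:Y'\to X'$ and $g:Y''\to X''$ in $\boldsymbol{M}$), define $f\otimes^{o} g := f\otimes g$, which is a map $Y'\otimes Y''\to X'\otimes X''$ in $\boldsymbol{M}$ and thus $X'\otimes^{o} X''\to Y'\otimes^{o} Y''$ in $\boldsymbol{M}^{o}$. The interchange law \eqref{ec:TP=functor} in $\boldsymbol{M}$ translates directly to the interchange law for $\otimes^{o}$ with respect to $\bullet$, since composing in the reverse order in $\boldsymbol{M}^{o}$ matches composing in the forward order in $\boldsymbol{M}$.

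Then I would verify that $a^{o}$, $l^{o}$, $r^{o}$ are natural isomorphisms and satisfy the pentagon and triangle axioms. Naturality is automatic: if $\eta:F\Rightarrow G$ is a natural isomorphism between functors $\boldsymbol{M}^{n}\to\boldsymbol{M}$, then $\eta^{-1}$ is natural between the opposite functors viewed on $\boldsymbol{M}^{o}$. For the pentagon, observe that the pentagon identity in $\boldsymbol{M}$ can be written as an equality of two compositions of associativity constraints $\alpha_{1}\circ\alpha_{2} = \alpha_{3}\circ\alpha_{4}\circ\alpha_{5}$; inverting and translating to $\bullet$-composition gives exactly the pentagon for $a^{o} = a^{-1}$ in $\boldsymbol{M}^{o}$. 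The triangle axiom is handled identically, using $l^{o} = l^{-1}$ and $r^{o} = r^{-1}$.

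Finally, for the braided case, I would check the two hexagon identities for $\chi^{o}_{X,Y} := (\chi_{X,Y})^{-1}$. Each hexagon in $\boldsymbol{M}$ is an equality of compositions involving $\chi$ and $a^{\pm 1}$; taking inverses of both sides and re-expressing the result in terms of $\bullet$, $\chi^{o}$ and $a^{o}$ yields the corresponding hexagon in $\boldsymbol{M}^{o}$. The main (purely bookkeeping) obstacle throughout is keeping the source/target conventions straight when passing between $\circ$ and $\bullet$; there is no genuine mathematical content beyond the observation that inverting all arrows and composition in a commutative diagram yields another commutative diagram, applied to the pentagon, triangle and hexagon diagrams.
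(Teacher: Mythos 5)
Your verification is correct, and it is the standard arrow-reversal argument: the pentagon, triangle and hexagon diagrams for $a^{-1}$, $l^{-1}$, $r^{-1}$ and $(\chi_{X,Y})^{-1}$ in $\boldsymbol{M}^{o}$ are exactly the original diagrams with all arrows inverted, hence commute. The paper states this construction as a fact without any proof, so there is nothing to compare against; your write-up simply supplies the routine check the authors omit.
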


\begin{definition}
Let $S$ be a set. We say that a monoidal category $\boldsymbol{M}$ is $S$%
\emph{-distributive} if every $S$-indexed family of objects in $\boldsymbol{%
M\ }$has a coproduct, and the tensor product is distributive to the left and
to the right over any such coproduct. More precisely, $\boldsymbol{M}$ is $S$%
-distributive if for any family $\left\{ X_{i}\right\} _{i\in S}$ the
coproduct $(\textstyle\bigoplus\nolimits_{i\in S}X_{i},\{\sigma _{i}\}_{i\in
S})$ exists and, for an arbitrary object $X$,
\begin{equation*}
(X\otimes (\textstyle\bigoplus\limits_{i\in S}X_{i}),\{X\otimes \sigma
_{i}\}_{i\in S})\text{\qquad and\qquad }((\textstyle\bigoplus\limits_{i\in
S}X_{i})\otimes X,\{\sigma _{i}\otimes X\}_{i\in S})
\end{equation*}%
are the coproducts of $\{X\otimes X_{i}\}_{i\in S}$ and $\{X_{i}\otimes
X\}_{i\in S},$ respectively. Note that all monoidal categories are $S$%
-distributive, provided that $S$ is a singleton (i.e. the cardinal of $S$ is
1).
\end{definition}

\begin{fact}[Enriched categories.]
An \emph{enriched category} $\boldsymbol{C}$ over $(\boldsymbol{M},\otimes ,%
\mathbf{1}),$ or an $\boldsymbol{M}$-category for short, consists of:

\begin{enumerate}
\item A class of objects, that we denote by ${C}_{0}.$ If ${C}_{0}$ is a set
we say that $\boldsymbol{C}$ is \emph{small}.

\item A hom-object $_{x}{C}_{y}$ in $\boldsymbol{M}$, for each $x$ and $y$
in ${C}_{0}.$ It plays the same role as $\mathrm{Hom}_{\boldsymbol{C}}(y,x)$%
, the set of morphisms from $y$ to $x$ in an ordinary category $\boldsymbol{C%
}$.

\item A morphism $_{x}c_{z}^{y}:{}_{x}{C}_{y}\otimes {}_{y}{C}%
_{z}\rightarrow {}_{x}{C}_{z},$ for all $x,$ $y$ and $z$ in ${C}_{0}.$

\item A morphism $1_{x}:\mathbf{1\rightarrow {}}_{x}{C}_{x},$ for all $x$ in
${C}_{0}.$
\end{enumerate}

\noindent By definition one assumes that the diagrams in Figure~\ref{ec:
M-categorie} are commutative, for all $x,y,z$ and $t$ in ${C}_{0}.$ The
commutativity of the square means that the composition of morphisms in $%
\boldsymbol{C}$, defined by $\left\{ _{x}c_{z}^{y}\right\} _{z,y,z\in C_{0}}$%
, is \emph{associative}. We shall say that $1_{x}$ is the \emph{identity
morphism} of $x\in C_{0}$.

\begin{figure}[h]
\begin{equation*}
\xymatrix{ {}\sv{_x{C}_y\otimes{}_y{C}_z\otimes{}_z{C}_t}
\ar[rr]^-{_x{c}^y_z\otimes{}_z{C}_t}\ar[d]_-{_x{C}_y\otimes{}_y{c}^z_t} &
&\sv{_x{C}_z\otimes{}_z{C}_t}\ar[d]^-{_x{c}^z_t}\\
\sv{_x{C}_y\otimes{}_y{C}_t}\ar[rr]_-{_x{c}^y_t} & &\sv{_x{C}_t}}\qquad %
\xymatrix{ \sv{_x{C}_y\otimes{}_y{C}_y} \ar[rd]_-{_xc^y_y} &\sv{_x{C}_y}
\ar[l]_-{_xC_y\otimes{}1_y}\ar[r]^-{1_x\otimes{}_xC_y}
&\sv{_x{C}_x\otimes{}_x{C}_y}\ar[ld]^-{_xc^x_y}\\ & \sv{_x{C}_y}\ar@{=}[u] &
}
\end{equation*}%
\caption{The definition of enriched categories.}
\label{ec: M-categorie}
\end{figure}

An $\boldsymbol{M}$-functor $\boldsymbol{\alpha }:\boldsymbol{C}\rightarrow
\boldsymbol{C}^{\prime }$ is a couple $(\alpha _{0},\{_{x}\alpha
_{y}\}_{x,y\in C_{0}})$, where $\alpha _{0}:C_{0}\rightarrow C_{0}^{\prime }$
is a function and $_{x}\alpha _{y}:{}_{x}C_{y}\rightarrow {}_{x^{\prime
}}C^{\prime }_{y^{\prime }}$ is a morphism in $\boldsymbol{M}$ for any $%
x,y\in C_{0},$ where for simplicity we denoted $\alpha _{0}(u)$ by $%
u^{\prime }$, for any $u\in C_{0}.$ By definition, $\alpha _{0}$ and $%
_{x}\alpha _{y}$ must satisfy the following conditions
\begin{equation*}
_{x}\alpha _{x}\circ 1_{x}^{C}=1_{x^{\prime }}^{D}\qquad \text{and\qquad }%
_{x^{\prime }}d_{z^{\prime }}^{y^{\prime }}\circ (_{x}\alpha _{y}\otimes
{}_{y}\alpha _{z})={}_{x}\alpha _{z}\circ {}_{x}c_{z}^{y}.
\end{equation*}
\end{fact}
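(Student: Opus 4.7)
The excerpt concludes with two definitions rather than a theorem statement: the data and axioms for an $\boldsymbol{M}$-category $\boldsymbol{C}$, and the data and axioms for an $\boldsymbol{M}$-functor $\boldsymbol{\alpha}:\boldsymbol{C}\to\boldsymbol{C}'$. Strictly speaking there is nothing to prove, since both are by-definition introductions of terminology; what I would present in place of a proof is the plan for the verification obligations these definitions impose, and which must be discharged in every subsequent construction of an $\boldsymbol{M}$-category (notably for the twisted tensor product $\boldsymbol{A}\otimes_{R}\boldsymbol{B}$ that is the central object of the paper).

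For the enriched category $\boldsymbol{C}$, the plan is to check the three diagrams of Figure~\ref{ec: M-categorie} pointwise in $x,y,z,t\in C_0$: the associativity square requires that ${}_xc_t^z\circ({}_xc_z^y\otimes{}_zC_t)$ and ${}_xc_t^y\circ({}_xC_y\otimes{}_yc_t^z)$ agree as morphisms ${}_xC_y\otimes{}_yC_z\otimes{}_zC_t\to{}_xC_t$, while the two unit triangles demand ${}_xc_y^x\circ(1_x\otimes{}_xC_y)={}_xC_y={}_xc_y^y\circ({}_xC_y\otimes 1_y)$. Here the implicit identifications rely on Mac Lane's Coherence Theorem, which lets us suppress the associator $a$ and the unit constraints $l,r$ without ambiguity; in a fully rigorous verification these constraints would be reinstated in the obvious positions between the iterated tensor factors. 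For the $\boldsymbol{M}$-functor $\boldsymbol{\alpha}$ the verification similarly splits into the two equations displayed at the end of the excerpt: unit preservation ${}_x\alpha_x\circ 1_x^C=1_{x'}^D$ at every object $x\in C_0$, and compatibility with composition ${}_{x'}d_{z'}^{y'}\circ({}_x\alpha_y\otimes{}_y\alpha_z)={}_x\alpha_z\circ{}_xc_z^y$ along every triple $x,y,z$.

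The only subtlety I would flag as a potential obstacle is the interplay between the object function $\alpha_0$ and the hom-morphisms $_x\alpha_y$: the codomain of $_x\alpha_y$ lies in $\boldsymbol{C}'$ between $\alpha_0(x)=x'$ and $\alpha_0(y)=y'$, so the composition on the left of the displayed functor equation is taken in $\boldsymbol{C}'$ whereas ${}_xc_z^y$ on the right belongs to $\boldsymbol{C}$, and a careless reader may conflate the two. No genuine technical difficulty arises at this definitional stage; the substantive work, where these identities become nontrivial constraints that must be verified, begins with Theorem~\ref{teo1} and the construction of the twisted tensor product from a simple twisting system.
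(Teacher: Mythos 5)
You correctly observe that this is a definition rather than a theorem, so the paper supplies no proof and none is required; your restatement of the associativity, unit, and functoriality axioms is accurate and consistent with Figure~\ref{ec: M-categorie} and the surrounding text. Nothing further is needed.
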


\begin{fact}
\label{fa:not} To work easier with tensor products of hom-objects in $%
\boldsymbol{M}$-categories we introduce some new notation. Let $S$ be a set
and for every $i=1,\dots ,n+1$ we pick up a family $\left\{
_{x}X_{y}^{i}\right\} _{x,y\in S}$ of objects in $\boldsymbol{M}$. If $%
x_{1},\dots ,x_{n+1}\in S$ then the tensor product $_{x_{0}}X_{x_{1}}^{1}%
\otimes {}_{x_{1}}X_{x_{2}}^{2}\otimes \cdots \otimes
{}_{x_{n-1}}X_{x_{n}}^{n}\otimes {}_{x_{n}}X_{x_{n+1}}^{n+1}$ will be
denoted by $_{x_{0}}X_{x_{1}}^{1}X_{x_{2}}^{2}\cdots
{}{}_{x_{n}}X_{x_{n+1}}^{n+1}.$ Assuming that $\boldsymbol{M}$ is $S$%
-distributive and fixing $x_{0}$ and $x_{n+1},$ one can construct
inductively the iterated coproduct
\begin{equation}
_{x_{0}}X_{\overline{x}_{1}}^{{1}}\cdots {}_{\overline{x}_{n-1}}X_{\overline{%
x}_{n}}^{n}X_{x_{n+1}}^{n+1}:=\textstyle\bigoplus_{x_{1}\in S}\cdots %
\textstyle\bigoplus_{x_{n}\in S}{}_{x_{0}}X_{x_{1}}^{1}\cdots
{}_{x_{n-1}}X_{x_{n}}^{n}X_{x_{n+1}}^{n+1}.  \label{cop}
\end{equation}%
It is not difficult to see that this object is a coproduct of $%
\{_{x_{0}}X_{x_{1}}^{1}\cdots {}_{x_{n}}X_{x_{n+1}}^{n+1}\}_{\left(
x_{1},\dots ,x_{n}\right) \in S^{n}}.$ Moreover, as a consequence of the
fact that the tensor product is distributive over the direct sum, we have
\begin{equation}
_{x_{0}}X_{\overline{x}_{1}}^{1}\cdots {}_{\overline{x}_{n-1}}X_{\overline{x}%
_{n}}^{n}X_{x_{n+1}}^{n+1}\cong \textstyle\bigoplus_{x_{1}\in S}\cdots %
\textstyle\bigoplus_{x_{n}\in S}{}_{x_{0}}X_{x_{\pi (1)}}^{1}X_{x_{\pi
(2)}}^{2}\cdots {}_{x_{\pi (n)}}X_{x_{n+1}}^{n+1}  \label{ec:suma-tensor}
\end{equation}%
for any permutation $\pi $ of the set $\{1,2,\dots ,n\}.$ The inclusion of $%
_{x_{0}}X_{x_{1}}^{1}\cdots {}_{x_{n}}X_{x_{n+1}}^{n+1}$ into the coproduct
defined in \eqref{cop} is also inductively constructed as the composition of
the following two arrows%
\begin{equation*}
_{x_{0}}X_{x_{1}}^{1}\otimes {}_{x_{1}}X_{x_{2}}^{1}\cdots
{}_{x_{n}}X_{x_{n+1}}^{n+1}\longrightarrow {}_{x{}_{0}}X_{x_{1}}^{1}\otimes
{}_{x_{1}}X_{\overline{x}_{2}}^{1}\cdots {}_{\overline{x}%
_{n}}X_{x_{n+1}}^{n+1}\hookrightarrow \textstyle\bigoplus_{x_{1}\in
S}{}_{x{}_{0}}X_{x_{1}}^{1}\otimes {}_{x_{1}}X_{\overline{x}_{2}}^{1}\cdots
{}_{\overline{x}_{n}}X_{x_{n+1}}^{n+1}{},
\end{equation*}%
where the first morphism is the tensor product between the identity of $%
_{x_{0}}X_{x_{1}}^{1}$ and the inclusion of $_{x_{1}}X_{x_{2}}^{1}\cdots
{}_{x_{n}}X_{x_{n+1}}^{n+1}$ into $_{x_{1}}X_{\overline{x}_{2}}^{1}\cdots
{}_{\overline{x}_{n}}X_{x_{n+1}}^{n+1}.$ Clearly, for every $x_{n+1}\in S,$%
\begin{equation*}
_{\overline{x}_{0}}X_{\overline{x}_{1}}^{1}\cdots {}_{\overline{x}_{n-1}}X_{%
\overline{x}_{n}}^{n}X_{x_{n+1}}^{n+1}:=\textstyle\bigoplus_{x_{0}\in
S}{}_{x_{0}}X_{\overline{x}_{1}}^{1}\cdots {}_{\overline{x}_{n-1}}X_{%
\overline{x}_{n}}^{n}X_{x_{n+1}}^{n+1}
\end{equation*}%
is the coproduct of $\{_{x_{0}}X_{x_{1}}^{1}\cdots
{}_{x_{n}}X_{x_{n+1}}^{n+1}\}_{\left( x_{0},x_{1},\dots ,x_{n}\right) \in
S^{n}}.$ The objects $_{x_{0}}X_{\overline{x}_{1}}^{1}\cdots {}_{\overline{x}%
_{n-1}}X_{\overline{x}_{n}}^{n}X_{\overline{x}_{n+1}}^{n+1}$ and $_{%
\overline{x}_{0}}X_{\overline{x}_{1}}^{1}\cdots {}_{\overline{x}_{n-1}}X_{%
\overline{x}_{n}}^{n}X_{\overline{x}_{n+1}}^{n+1}$ are analogously defined.

A similar notation will be used for morphisms. Let us suppose that $%
_{x}\alpha _{y}^{i}{}{}$ is a morphism in $\boldsymbol{M}$ with source $%
{}_{x}X_{y}^{i}$ and target $_{x}Y_{y}^{i},$ where $x,y\in S$ and $i\in
\{1,\dots ,n+1\}.$ We set
\begin{equation*}
_{x_{1}}\alpha _{x_{2}}^{1}\alpha _{x_{3}}^{2}\cdots {}_{x_{n}}\alpha
_{x_{n+1}}^{n+1}:={}_{x_{0}}\alpha _{x_{1}}^{1}\otimes \dots \otimes
{}_{x_{n}}\alpha _{x_{n+1}}^{n+1}.
\end{equation*}%
By the universal property of coproducts, $\{_{x_{0}}\alpha
_{x_{1}}^{1}\cdots {}_{x_{n}}\alpha _{x_{n+1}}^{n+1}\}_{\left( x_{1}\cdots
x_{n}\right) \in S^{n}}$ induces a unique map $_{x_{0}}\alpha _{\overline{x}%
_{1}}^{1}\cdots {}_{\overline{x}_{n-1}}\alpha _{\overline{x}%
_{n}}^{n-1}\alpha _{x_{n+1}}^{n}\ $that commutes with the inclusions. In a
similar way one constructs
\begin{equation*}
_{\overline{x}_{0}}\alpha _{\overline{x}_{1}}^{1}\cdots {}_{\overline{x}%
_{n-1}}\alpha _{\overline{x}_{n}}^{n}\alpha _{x_{n+1}}^{n+1},\qquad
{}_{x_{0}}\alpha _{\overline{x}_{1}}^{1}\cdots {}_{\overline{x}_{n-1}}\alpha
_{\overline{x}_{n}}^{n}\alpha _{\overline{x}_{n+1}}^{n+1}\qquad \text{and}%
\qquad {}_{\overline{x}_{0}}\alpha _{\overline{x}_{1}}^{1}\cdots {}_{%
\overline{x}_{n-1}}\alpha _{\overline{x}_{n}}^{n}\alpha _{\overline{x}%
_{n+1}}^{n+1}.
\end{equation*}%
To make the above notation clearer, let us have a look at some examples. Let
$\boldsymbol{A}$ and $\boldsymbol{B}$ be two $\boldsymbol{M}$-categories
such that $A_{0}=B_{0}=S$. Recall that the hom-objects in $\boldsymbol{A}$
and $\boldsymbol{B}$ are denoted by $_{x}A_{y}$ and $_{x}B_{y}$. Hence, $_{%
\overline{x}}{A}_{y}=\textstyle\bigoplus\limits_{x\in S}{}{}{_{x}A_{y}.}$ We
also have ${_{x}A_{y}B_{z}A_{t}}={}_{x}{A}_{y}\otimes {}_{y}{B}_{z}\otimes
{}_{z}{A}_{t}$ and%
\begin{equation*}
{_{x}A_{\overline{y}}B_{\overline{z}}A_{t}=}\textstyle\bigoplus\limits_{y\in
S}\textstyle\bigoplus\limits_{z\in S}{_{x}A_{y}B_{z}A_{t}\cong }\textstyle%
\bigoplus\limits_{z\in S}\textstyle\bigoplus\limits_{y\in S}{%
_{x}A_{y}B_{z}A_{t}\cong }\textstyle\bigoplus\limits_{y,z\in S}{%
_{x}A_{y}B_{z}A_{t}.}
\end{equation*}%
\noindent Since we have agreed to use the same notation for an object and
its identity map, we can write $_{x}B_{y}\alpha _{z}A_{t}\beta _{u}$ instead
of $Id_{_{x}B_{y}}\otimes {}_{y}\alpha _{z}\otimes Id_{{}_{z}A_{t}}\otimes
{}_{t}\beta _{u}$, for any morphisms $_{y}\alpha _{z}$ and $_{t}\beta _{u}$
in $\boldsymbol{M}.$ The maps $_{\overline{x}}a_{z}^{y}:{}_{\overline{x}%
}A_{y}A_{z}\longrightarrow {}_{\overline{x}}A_{z}$ and $_{x}a_{\overline{z}%
}^{y}:{}_{x}A_{y}A_{\overline{z}}\longrightarrow {}_{x}A_{\overline{z}}$ are
induced by the composition in $\boldsymbol{A}$, that is by the set $%
\{_{x}a_{z}^{y}\}_{z\in S}$. For example, the former map is uniquely defined
such that its restriction to $_{x}A_{y}A_{z}$ and $\sigma _{x,z}\circ
\,_{x}a_{z}^{y}$ coincide for all $x\in S,$ where $\sigma _{x,z}$ is the
inclusion of $_{x}A_{z}$ into $_{\overline{x}}A_{z}.$ Similarly, $_{x}a_{z}^{%
\overline{y}}:$ ${}_{x}A_{\overline{y}}A_{z}\longrightarrow {}_{x}A_{z}$ is
the unique map whose restriction to $_{x}A_{y}A_{z}$ is $_{x}a_{z}^{y},$ for
all $y\in S.$

For more details on enriched categories the reader is referred to \cite{Ke}.
We end this section giving some examples of enriched categories.
\end{fact}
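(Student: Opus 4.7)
The plan is to verify the two main mathematical assertions embedded in the fact: first, that the iterated coproduct defined in \eqref{cop} is genuinely a coproduct of the family $\{{}_{x_{0}}X_{x_{1}}^{1}\cdots{}_{x_{n}}X_{x_{n+1}}^{n+1}\}_{(x_{1},\dots,x_{n})\in S^{n}}$, and second, that the permutation isomorphism in \eqref{ec:suma-tensor} holds. Both rest squarely on the $S$-distributivity hypothesis, which is precisely what permits the tensor product and the coproduct to be interchanged.

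I would prove the first assertion by induction on $n$. The base case $n=1$ is just the definition: $_{x_{0}}X_{\overline{x}_{1}}^{1}X_{x_{2}}^{2}=\bigoplus_{x_{1}\in S}{}_{x_{0}}X_{x_{1}}^{1}\otimes{}_{x_{1}}X_{x_{2}}^{2}$ exists and is a coproduct over $S^{1}$ because $\boldsymbol{M}$ is $S$-distributive. For the inductive step, set $Y_{x_{1}}:={}_{x_{1}}X_{\overline{x}_{2}}^{2}\cdots{}_{\overline{x}_{n}}X_{x_{n+1}}^{n+1}$, which by the inductive hypothesis is a coproduct of $\{{}_{x_{1}}X_{x_{2}}^{2}\cdots{}_{x_{n}}X_{x_{n+1}}^{n+1}\}_{(x_{2},\dots,x_{n})\in S^{n-1}}$. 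Left $S$-distributivity applied to ${}_{x_{0}}X_{x_{1}}^{1}\otimes Y_{x_{1}}$ then makes it a coproduct over $S^{n-1}$ of the expected summands, and taking one further coproduct over $x_{1}\in S$ together with the standard fact that a coproduct of coproducts is a coproduct over the product index set gives the desired coproduct over $S^{n}$.

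For the second assertion, the point is that, once the first is known, both sides of \eqref{ec:suma-tensor} are coproducts of families which, as unordered collections of objects of $\boldsymbol{M}$, agree: the permutation $\pi$ merely relabels the indexing set $S^{n}$. Invoking the universal property of coproducts, there is a unique isomorphism matching the inclusion of the summand indexed by $(x_{1},\dots,x_{n})$ on the left-hand side with the inclusion indexed by $(x_{\pi(1)},\dots,x_{\pi(n)})$ on the right-hand side. A completely analogous induction, this time applied at the outermost index, takes care of the dual assertion that $_{\overline{x}_{0}}X_{\overline{x}_{1}}^{1}\cdots{}_{\overline{x}_{n-1}}X_{\overline{x}_{n}}^{n}X_{x_{n+1}}^{n+1}$ is a coproduct of the $(n+1)$-tuple-indexed family.

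The main obstacle I anticipate is purely bookkeeping: tracking the nested coproduct structure and making sure the two versions of $S$-distributivity (left and right of the tensor) are correctly invoked at each step. Once one has the auxiliary observation that for any doubly indexed family $\{Z_{i,j}\}_{(i,j)\in I\times J}$ with the relevant coproducts, $\bigoplus_{i\in I}\bigoplus_{j\in J}Z_{i,j}$ is canonically a coproduct of $\{Z_{i,j}\}_{(i,j)\in I\times J}$, the induction is routine and no further conceptual ingredient is needed beyond the universal property and distributivity.
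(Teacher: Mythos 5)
Your proposal is correct and follows exactly the route the paper intends: the paper itself only sketches these claims, constructing the iterated coproduct and its structural inclusions inductively and appealing to $S$-distributivity and the universal property, which is precisely what your induction on $n$ and your relabeling argument for \eqref{ec:suma-tensor} make explicit. The only point worth keeping in mind is that left/right $S$-distributivity is stated only for $S$-indexed families, so it must be threaded through the nested coproducts one layer at a time, as your auxiliary observation on doubly indexed families already anticipates.
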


\begin{fact}[The category $\boldsymbol{Set}$.]
The category of sets is monoidal with respect to the Cartesian product. The
unit object is a fixed singleton set, say $\{\emptyset \}.$ The coproduct in
$\boldsymbol{Set}$ is the disjoint union. Since the disjoint union and the
Cartesian product commute, $\boldsymbol{Set}$ is $S$-distributive for any
set $S$. Clearly, a $\boldsymbol{Set}$-category is an ordinary category. If $%
\boldsymbol{C}$ is such a category, then an element $f\in {}_{x}{C}_{y}$
will be thought of as a morphism from $y$ to $x,$ and it will be denoted by $%
f:y\rightarrow x,$ as usual. In this case we shall say that $y$
(respectively $x)$ is the domain or the source (respectively the codomain or
the target) of $f.$ The same notation and terminology will be used for
arbitrary $\boldsymbol{M}$-categories, whose objects are sets.
\end{fact}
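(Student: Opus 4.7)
The plan is to verify the three assertions bundled into this fact: first, that $\boldsymbol{Set}$ is monoidal under the Cartesian product with unit $\{\emptyset\}$; second, that it is $S$-distributive for every set $S$; and third, that a $\boldsymbol{Set}$-enriched category is the same as a locally small ordinary category.

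For the monoidal structure, I would take $\mathbf{1}:=\{\emptyset\}$ together with the canonical bijections $a_{X,Y,Z}\colon((x,y),z)\mapsto(x,(y,z))$, $l_{X}\colon(\emptyset,x)\mapsto x$ and $r_{X}\colon(x,\emptyset)\mapsto x$; naturality is clear because every constraint acts componentwise, and Mac Lane's pentagon and triangle identities reduce to trivial tuple-reshufflings. For $S$-distributivity, I realize the coproduct as $\bigsqcup_{i\in S}X_{i}=\{(y,i)\mid i\in S,\ y\in X_{i}\}$ with inclusions $\sigma_{i}(y):=(y,i)$, and observe that the comparison map
\begin{equation*}
\textstyle\bigsqcup_{i\in S}(X\times X_{i})\longrightarrow X\times\textstyle\bigsqcup_{i\in S}X_{i},\qquad ((x,y),i)\longmapsto(x,(y,i)),
\end{equation*}
is manifestly a bijection, and the symmetric case is analogous. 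Since these bijections are precisely the arrows out of the coproducts induced by $\{X\times\sigma_{i}\}_{i\in S}$ and $\{\sigma_{i}\times X\}_{i\in S}$, it follows that the cones $(X\times\bigsqcup_{i\in S}X_{i},\{X\times\sigma_{i}\}_{i\in S})$ and $(\bigsqcup_{i\in S}X_{i}\times X,\{\sigma_{i}\times X\}_{i\in S})$ are coproducts, which is exactly the required distributivity.

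For the equivalence with ordinary categories, I would translate the defining data on the nose. A $\boldsymbol{Set}$-category $\boldsymbol{C}$ provides a class $C_{0}$, hom-sets ${}_{x}C_{y}$, composition maps ${}_{x}c_{z}^{y}\colon{}_{x}C_{y}\times{}_{y}C_{z}\to{}_{x}C_{z}$, and, since a map out of $\{\emptyset\}$ is just a choice of element, identity elements $1_{x}\in{}_{x}C_{x}$. Setting $\mathrm{Hom}_{\boldsymbol{C}}(y,x):={}_{x}C_{y}$ and $g\circ f:={}_{x}c_{z}^{y}(f,g)$ for $f\in{}_{x}C_{y}$ and $g\in{}_{y}C_{z}$, the square and triangle of Figure~\ref{ec: M-categorie} become respectively associativity $(h\circ g)\circ f=h\circ(g\circ f)$ and the unit laws $f\circ 1_{y}=f=1_{x}\circ f$, reproducing the axioms of an ordinary category; running the same correspondence in reverse turns any locally small category into a $\boldsymbol{Set}$-category.

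The only point requiring care is a bookkeeping one rather than a mathematical obstacle: the convention that an element of ${}_{x}C_{y}$ is to be read as an arrow $y\to x$ (rather than $x\to y$) is forced by the shape of the composition maps ${}_{x}c_{z}^{y}\colon{}_{x}C_{y}\otimes{}_{y}C_{z}\to{}_{x}C_{z}$, which pair adjacent indices from the inside out. Once this convention is fixed, the identifications above are strictly functorial and the fact follows.
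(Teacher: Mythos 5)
Your verification is correct and follows exactly the route the paper takes implicitly: the paper simply asserts these facts, with the one-line justification that disjoint union and Cartesian product commute, and your explicit bijection $((x,y),i)\mapsto(x,(y,i))$ together with the observation that it is the map induced by $\{X\times\sigma_{i}\}_{i\in S}$ is precisely the content of that assertion. The translation of the diagrams in Figure~\ref{ec: M-categorie} into associativity and unit laws, and the remark that the convention $f\in{}_{x}C_{y}$ means $f:y\to x$ is forced by the shape of ${}_{x}c_{z}^{y}$, match the paper's conventions exactly.
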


\begin{fact}[The category $\mathbb{K}$-$\boldsymbol{Mod}$.]
Let $\mathbb{K}$ be a commutative ring. The category of $\mathbb{K}$-modules
is monoidal with respect to the tensor product of $\mathbb{K}$-modules. The
unit object is $\mathbb{K},$ regarded as a $\mathbb{K}$-module. This
monoidal category is $S$-distributive for any $S$. By definition, a $\mathbb{%
K}$-\emph{linear category} is an enriched category over $\mathbb{K}$-$%
\boldsymbol{Mod}$.
\end{fact}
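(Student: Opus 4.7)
The plan is to verify the two substantive assertions in the statement: (i) that $\mathbb{K}$-$\boldsymbol{Mod}$ carries a monoidal structure with $\otimes_{\mathbb{K}}$ as tensor product and $\mathbb{K}$ as unit object, and (ii) that it is $S$-distributive for every set $S$. The last sentence is a definition, so there is nothing to prove there.

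For (i), I would take the associativity constraint $a_{M,N,P}\colon (M\otimes_{\mathbb{K}} N)\otimes_{\mathbb{K}} P \to M\otimes_{\mathbb{K}}(N\otimes_{\mathbb{K}} P)$ and the unit constraints $l_M\colon \mathbb{K}\otimes_{\mathbb{K}} M \to M$ and $r_M\colon M\otimes_{\mathbb{K}}\mathbb{K}\to M$ to be the canonical $\mathbb{K}$-linear isomorphisms determined on simple tensors by $(m\otimes n)\otimes p \mapsto m\otimes (n\otimes p)$, $k\otimes m \mapsto km$, and $m\otimes k \mapsto km$. Their well-definedness and bijectivity follow from the universal property of the tensor product, since a $\mathbb{K}$-linear map out of a tensor product is uniquely determined by its values on pure tensors, and mutual inverses can be constructed by the same universal property. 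Naturality in each slot and the pentagon and triangle identities then reduce to verifying equalities on pure tensors such as $((m\otimes n)\otimes p)\otimes q$ and $m\otimes p$, which are immediate.

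For (ii), the direct sum $\bigoplus_{i\in S} M_i$ equipped with the canonical inclusions $\sigma_i$ is the coproduct of any family $\{M_i\}_{i\in S}$ in $\mathbb{K}$-$\boldsymbol{Mod}$, with no cardinality restriction on $S$. To establish the left and right distributivity of $\otimes_{\mathbb{K}}$ over coproducts, the cleanest approach is to invoke the tensor-hom adjunction: $M\otimes_{\mathbb{K}}(-)$ and $(-)\otimes_{\mathbb{K}} M$ are left adjoints to $\Hom_{\mathbb{K}}(M,-)$, and left adjoints preserve colimits. Unwrapped, this means that the pair $(M\otimes_{\mathbb{K}}\bigoplus_{i\in S} N_i,\{M\otimes\sigma_i\}_{i\in S})$ satisfies the universal property of the coproduct of $\{M\otimes_{\mathbb{K}} N_i\}_{i\in S}$; the symmetric statement handles right distributivity.

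There is no real obstacle here, since this is a standard preliminary. The only bookkeeping that requires care is either the pentagon/triangle verification on pure tensors, or the invocation of the adjoint functor argument for preservation of coproducts; both are entirely routine. A reader uneasy with the adjointness argument can equivalently check distributivity directly by constructing, for any cocone $\{f_i\colon M\otimes_{\mathbb{K}} N_i\to P\}_{i\in S}$, the unique $\mathbb{K}$-linear map $M\otimes_{\mathbb{K}}\bigoplus_i N_i\to P$ whose restriction to each $M\otimes_{\mathbb{K}} N_i$ agrees with $f_i$, using bilinearity and the universal property of the direct sum.
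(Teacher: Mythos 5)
Your proposal is correct. The paper states this as a standard preliminary and offers no proof, so there is nothing to compare against; your verification --- the canonical associativity and unit constraints via the universal property of $\otimes_{\mathbb{K}}$, and $S$-distributivity either from the fact that $M\otimes_{\mathbb{K}}(-)$ and $(-)\otimes_{\mathbb{K}}M$ are left adjoints to $\Hom_{\mathbb{K}}(M,-)$ or by the direct cocone argument --- is the standard one and is sound for arbitrary $S$ and any commutative ring $\mathbb{K}$.
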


\begin{fact}[The category $\Lambda $-$\boldsymbol{Mod}$-$\Lambda .$]
Let $\Lambda $ be a $\mathbb{K}$-algebra and let $\Lambda $-$\boldsymbol{Mod}
$-$\Lambda $ denote the category of left (or right) modules over $\Lambda
\otimes _{\mathbb{K}}\Lambda ^{o},$ where $\Lambda ^{o}$ is the opposite
algebra of $\Lambda .$ Thus, $M$ is an object in $\Lambda $-$\boldsymbol{Mod}
$-$\Lambda $ if, and only if, it is a left and a right $\Lambda $-module and
these structures are compatible in the sense that
\begin{equation*}
a\cdot m=m\cdot a\text{\qquad and\qquad }(x\cdot m)\cdot y=x\cdot (m\cdot y)
\end{equation*}%
for all $a\in \mathbb{K},$ $x,y\in \Lambda $ and $m\in M.$ A morphism in $%
\Lambda $-$\boldsymbol{Mod}$-$\Lambda $ is a map of left and right $\Lambda $%
-modules. The category of $\Lambda $-bimodules is monoidal with respect to $%
(-)\otimes _{\Lambda }(-).$ The unit object in $\Lambda $-$\boldsymbol{Mod}$-%
$\Lambda $ is $\Lambda ,$ regarded as a $\Lambda $-bimodule. This monoidal
category also is $S$-distributive for any $S$.
\end{fact}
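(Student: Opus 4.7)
The statement packages three standard assertions about the category $\Lambda$-$\boldsymbol{Mod}$-$\Lambda$ that I would verify in sequence: the equivalence with $\Lambda\otimes_{\mathbb{K}}\Lambda^o$-modules, the monoidal structure given by $\otimes_\Lambda$ with unit $\Lambda$, and $S$-distributivity.

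First I would establish the equivalence. Given a left $(\Lambda\otimes_{\mathbb{K}}\Lambda^o)$-module $M$, I would define $x\cdot m:=(x\otimes 1)\cdot m$ and $m\cdot y:=(1\otimes y)\cdot m$; the first is a left $\Lambda$-action and the second a right $\Lambda$-action by the algebra homomorphisms $\Lambda\hookrightarrow\Lambda\otimes\Lambda^o$ and $\Lambda^o\hookrightarrow\Lambda\otimes\Lambda^o$. Compatibility $(x\cdot m)\cdot y=x\cdot(m\cdot y)$ follows from the commutativity relation $(x\otimes 1)(1\otimes y)=x\otimes y=(1\otimes y)(x\otimes 1)$ in $\Lambda\otimes_{\mathbb{K}}\Lambda^o$; the identification $a\cdot m=m\cdot a$ for $a\in\mathbb{K}$ follows since both reduce to the underlying $\mathbb{K}$-module action. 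The converse (bimodule structures give a $\Lambda\otimes_{\mathbb{K}}\Lambda^o$-action via $(x\otimes y)\cdot m:=x\cdot m\cdot y$) is a direct check using exactly the same two compatibility identities, and the two constructions are mutually inverse.

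Next I would check that $\otimes_\Lambda$ endows $\Lambda$-$\boldsymbol{Mod}$-$\Lambda$ with a monoidal structure. For bimodules $M,N$, the tensor product $M\otimes_\Lambda N$ is a $\Lambda$-bimodule via $x\cdot(m\otimes n)\cdot y:=(x\cdot m)\otimes(n\cdot y)$; this is well-defined because the balancing relation $m\cdot\lambda\otimes n=m\otimes\lambda\cdot n$ only involves the inner $\Lambda$-actions, which are untouched by the outer ones. The associativity constraint is the standard iso $(M\otimes_\Lambda N)\otimes_\Lambda P\cong M\otimes_\Lambda(N\otimes_\Lambda P)$, and the unit constraints are the canonical isos $\Lambda\otimes_\Lambda M\cong M\cong M\otimes_\Lambda\Lambda$; the pentagon and triangle identities reduce to those in $\mathbb{K}$-$\boldsymbol{Mod}$.

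Finally, for $S$-distributivity I would observe that $\Lambda$-$\boldsymbol{Mod}$-$\Lambda$ is cocomplete: an arbitrary $S$-indexed coproduct $\bigoplus_{i\in S}M_i$ is the $\mathbb{K}$-module direct sum with componentwise $\Lambda$-bimodule structure. Distributivity of $\otimes_\Lambda$ over such coproducts is then automatic since, for fixed $M$, the functors $M\otimes_\Lambda(-)$ and $(-)\otimes_\Lambda M$ are left adjoints (to $\Hom_{\Lambda\text{-}\boldsymbol{Mod}}(M,-)$ and $\Hom_{\boldsymbol{Mod}\text{-}\Lambda}(M,-)$ respectively) and thus preserve all colimits, in particular arbitrary coproducts. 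None of the three steps has a real obstacle; the only mildly technical point is verifying that the canonical map $M\otimes_\Lambda\bigl(\bigoplus_i N_i\bigr)\to\bigoplus_i(M\otimes_\Lambda N_i)$ is an iso of bimodules (not merely of abelian groups), which is immediate because the comparison map is $\Lambda$-bilinear by construction.
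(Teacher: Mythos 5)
Your verification is correct and follows the standard route; the paper itself states this as a preliminary example without proof, so there is no argument to compare against. All three of your steps (the identification with $\Lambda \otimes _{\mathbb{K}}\Lambda ^{o}$-modules, the monoidal structure via $\otimes _{\Lambda }$, and $S$-distributivity from the fact that $M\otimes _{\Lambda }(-)$ and $(-)\otimes _{\Lambda }M$ are left adjoints and hence preserve coproducts) are exactly the standard justifications one would supply.
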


\begin{fact}[The category $H$-$\boldsymbol{Mod}.$]
Let $H$ be a bialgebra over a commutative ring $\mathbb{K}$. The category of
left $H$-modules is monoidal with respect to $(-)\otimes _{\mathbb{K}}(-)$.
If $M$ and $N$ are $H$-modules, then the $H$-action on $M\otimes N$ is given
by
\begin{equation*}
h\cdot m\otimes n=\textstyle\sum {}h_{\left( 1\right) }\cdot m\otimes
h_{\left( 2\right) }\cdot n.
\end{equation*}%
In the above equation we used the $\Sigma $-notation $\Delta \left( h\right)
=\textstyle\sum {}$ $h_{\left( 1\right) }\otimes h_{\left( 2\right) }$. The
unit object is $\mathbb{K}$, which is an $H$-module with the trivial action,
induced by the counit of $H$. This category is $S$-distributive, for any $S$%
. An enriched category over $H$-$\boldsymbol{Mod}$ is called $H$-\emph{%
module category}.
\end{fact}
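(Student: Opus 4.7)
The plan is to verify the three claims packaged in this fact: (a) that $\otimes_{\mathbb{K}}$ lifts to a bifunctor on $H$-$\boldsymbol{Mod}$ via $h\cdot(m\otimes n)=\sum h_{(1)}\cdot m\otimes h_{(2)}\cdot n$; (b) that with unit object $\mathbb{K}$ under the counit action, together with the associativity and unit constraints inherited from $\mathbb{K}$-$\boldsymbol{Mod}$, one obtains a monoidal structure on $H$-$\boldsymbol{Mod}$; (c) that this monoidal category is $S$-distributive for every set $S$.

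For (a), I would first check that the stated formula defines a left $H$-module structure on $M\otimes_{\mathbb{K}}N$. The axiom $(hk)\cdot(m\otimes n)=h\cdot(k\cdot(m\otimes n))$ reduces immediately to $\Delta(hk)=\Delta(h)\Delta(k)$, which holds because $\Delta:H\to H\otimes H$ is an algebra homomorphism; unitality $1\cdot(m\otimes n)=m\otimes n$ reduces to $\Delta(1)=1\otimes 1$. Functoriality of $\otimes_{\mathbb{K}}$ on $H$-linear maps is clear from bilinearity of the formula. For (b), the associativity constraint $a_{M,N,P}$ of $\mathbb{K}$-$\boldsymbol{Mod}$ becomes $H$-linear by expanding $a_{M,N,P}(h\cdot((m\otimes n)\otimes p))$ and $h\cdot a_{M,N,P}((m\otimes n)\otimes p)$ via Sweedler notation and invoking coassociativity $(\Delta\otimes\mathrm{Id})\circ\Delta=(\mathrm{Id}\otimes\Delta)\circ\Delta$. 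The left unit constraint $l_M:\mathbb{K}\otimes M\to M$ is $H$-linear because $l_M(h\cdot(1\otimes m))=\sum\varepsilon(h_{(1)})h_{(2)}\cdot m$, which collapses to $h\cdot m$ by the counit axiom $(\varepsilon\otimes\mathrm{Id})\circ\Delta=\mathrm{Id}$; the right unit constraint is symmetric. The pentagon and triangle axioms carry over from $\mathbb{K}$-$\boldsymbol{Mod}$, because the constraints in $H$-$\boldsymbol{Mod}$ are the same underlying $\mathbb{K}$-linear maps.

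For (c), the forgetful functor $U:H$-$\boldsymbol{Mod}\to\mathbb{K}$-$\boldsymbol{Mod}$ creates coproducts: the direct sum $\bigoplus_{i\in S}X_i$ equipped with the componentwise $H$-action is the coproduct in $H$-$\boldsymbol{Mod}$, with canonical inclusions $\sigma_i$. Since $\mathbb{K}$-$\boldsymbol{Mod}$ is $S$-distributive, the comparison maps $X\otimes_{\mathbb{K}}(\bigoplus_{i\in S}X_i)\to\bigoplus_{i\in S}(X\otimes_{\mathbb{K}}X_i)$ and its right-hand analogue are $\mathbb{K}$-linear isomorphisms; they are also $H$-linear since the $H$-action on each side is computed componentwise through $\Delta$. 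By the universal property of coproducts, this says precisely that $(X\otimes(\bigoplus_{i}X_i),\{X\otimes\sigma_i\})$ and $((\bigoplus_{i}X_i)\otimes X,\{\sigma_i\otimes X\})$ are coproducts of the expected families in $H$-$\boldsymbol{Mod}$.

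There is no genuine obstacle here: the entire verification consists in checking $H$-linearity of structural maps already known to satisfy the relevant axioms in $\mathbb{K}$-$\boldsymbol{Mod}$. The only step that requires mild care is matching each such $H$-linearity check to the appropriate bialgebra axiom---multiplicativity and unitality of $\Delta$ for step (a), coassociativity for associativity, the counit identity for the unit constraints, and the algebra-map nature of $\Delta$ again for $S$-distributivity.
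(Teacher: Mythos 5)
Your verification is correct, and the paper itself offers no proof of this fact --- it is stated as a standard example in the preliminaries --- so there is nothing to diverge from: your argument (diagonal action via $\Delta$, $H$-linearity of the constraints from coassociativity and the counit axiom, coproducts created by the forgetful functor, and $H$-linearity of the distributivity isomorphisms of $\mathbb{K}$-$\boldsymbol{Mod}$) is exactly the standard implicit one. Each bialgebra axiom is matched to the right structural check, so no gaps remain.
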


\begin{fact}[The category $\boldsymbol{Comod}$-$H$.]
Dually, the category of right $H$-comodules is monoidal with respect to $%
(-)\otimes _{\mathbb{K}}(-)$. The coaction on and $M\otimes _{\mathbb{K}}N$
is defined by
\begin{equation*}
\rho (m\otimes n)=\textstyle\sum {}m_{\langle 0\rangle }\otimes n_{\langle
0\rangle }\otimes m_{\langle 1\rangle }n_{\langle 1\rangle },
\end{equation*}%
where $\rho (m)=\textstyle\sum {}m_{\langle 0\rangle }\otimes n_{\langle
0\rangle }$, and a similar $\Sigma $-notation was used for $\rho (n).$ This
category is $S$-distributive, for any set $S$. By definition, an $H$-\emph{%
comodule category} is an enriched category over $\boldsymbol{Comod}$-$H$.
\end{fact}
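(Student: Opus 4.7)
The statement packages three claims about $\boldsymbol{Comod}$-$H$: \textbf{(a)} the formula $\rho(m\otimes n)=\sum m_{\langle 0\rangle}\otimes n_{\langle 0\rangle}\otimes m_{\langle 1\rangle}n_{\langle 1\rangle}$ endows $M\otimes_{\mathbb{K}}N$ with a right $H$-comodule structure; \textbf{(b)} together with $\mathbb{K}$ as unit object and the associator and unitors inherited from $\mathbb{K}$-$\boldsymbol{Mod}$, this makes $\boldsymbol{Comod}$-$H$ a monoidal category; \textbf{(c)} the resulting monoidal category is $S$-distributive for every set $S$. My plan is to verify these in turn, exploiting the fact that the forgetful functor $U:\boldsymbol{Comod}\text{-}H\to\mathbb{K}\text{-}\boldsymbol{Mod}$ is faithful and creates both colimits and the underlying tensor product, so that most assertions reduce to the $\mathbb{K}$-$\boldsymbol{Mod}$ case plus a check of $H$-colinearity on certain canonical maps.

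The substantive part lies in (a). $\mathbb{K}$-linearity of $\rho$ is immediate from $\mathbb{K}$-bilinearity of the multiplication of $H$ and of each $\rho_{M},\rho_{N}$. For coassociativity I would expand
\[
((\rho\otimes H)\circ\rho)(m\otimes n)=\textstyle\sum m_{\langle 0\rangle}\otimes n_{\langle 0\rangle}\otimes m_{\langle 1\rangle(1)}n_{\langle 1\rangle(1)}\otimes m_{\langle 1\rangle(2)}n_{\langle 1\rangle(2)},
\]
after applying the coassociativity of $\rho_{M}$ and $\rho_{N}$ separately, and compare it with
\[
((M\otimes N\otimes\Delta)\circ\rho)(m\otimes n)=\textstyle\sum m_{\langle 0\rangle}\otimes n_{\langle 0\rangle}\otimes\Delta(m_{\langle 1\rangle}n_{\langle 1\rangle}).
\]
The two sides coincide precisely by the bialgebra identity $\Delta(hk)=\sum h_{(1)}k_{(1)}\otimes h_{(2)}k_{(2)}$, i.e.\ multiplicativity of $\Delta$. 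The counit axiom is handled analogously: it reduces to the counit axioms for $\rho_{M},\rho_{N}$ together with $\varepsilon$ being an algebra map, $\varepsilon(hk)=\varepsilon(h)\varepsilon(k)$. For (b) I would observe that when $f:M\to M'$ and $g:N\to N'$ are $H$-colinear, the same holds for $f\otimes g$ by direct inspection of the tensor coaction, so $\otimes_{\mathbb{K}}$ lifts to a bifunctor on $\boldsymbol{Comod}$-$H$; the associator $a_{M,N,P}$ and unitors $l_{M},r_{M}$ of $\mathbb{K}$-$\boldsymbol{Mod}$ are $H$-colinear by coassociativity and counitality of $\Delta$, and the pentagon and triangle persist because they already hold in $\mathbb{K}$-$\boldsymbol{Mod}$ and $U$ is faithful.

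For (c) I would equip $\bigoplus_{i\in S}M_{i}$ with the unique $\mathbb{K}$-linear coaction whose restriction along each inclusion $\sigma_{i}:M_{i}\hookrightarrow\bigoplus_{i\in S}M_{i}$ equals $(\sigma_{i}\otimes H)\circ\rho_{M_{i}}$; the comodule axioms hold summand by summand, each $\sigma_{i}$ becomes $H$-colinear, and the universal property transfers from $\mathbb{K}$-$\boldsymbol{Mod}$ because any family of $H$-colinear maps out of the $M_{i}$ is in particular $\mathbb{K}$-linear. Distributivity of $\otimes_{\mathbb{K}}$ over $\bigoplus$ in $\mathbb{K}$-$\boldsymbol{Mod}$ then forces the corresponding canonical comparison maps in $\boldsymbol{Comod}$-$H$ to be isomorphisms, once one verifies on each summand that they are $H$-colinear. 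The only genuine obstacle is the coassociativity check in step (a); once the bialgebra compatibility between $\Delta$ and the multiplication of $H$ is brought to bear, everything else is routine bookkeeping, where the main practical pitfall is keeping the coaction Sweedler indices $\langle\cdot\rangle$ strictly disentangled from the comultiplication indices $(\cdot)$.
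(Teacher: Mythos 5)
The paper offers no proof here --- this is stated as a standard example in the preliminaries --- so your write-up simply supplies the routine verification, and it is correct in substance and follows the only natural approach: reduce the comodule axioms for the tensor coaction to the bialgebra axioms of $H$ (coassociativity of $\rho$ to $\Delta(hk)=\sum h_{(1)}k_{(1)}\otimes h_{(2)}k_{(2)}$, the counit axiom to multiplicativity of $\varepsilon$), and transfer coproducts and distributivity along the faithful forgetful functor to $\mathbb{K}$-$\boldsymbol{Mod}$. One small misattribution worth fixing: the $H$-colinearity of the associator and the unitors for the coaction $\rho(m\otimes n)=\sum m_{\langle 0\rangle}\otimes n_{\langle 0\rangle}\otimes m_{\langle 1\rangle}n_{\langle 1\rangle}$ rests on the associativity and unitality of the \emph{multiplication} of $H$ (one compares the $H$-components $(m_{\langle 1\rangle}n_{\langle 1\rangle})p_{\langle 1\rangle}$ and $m_{\langle 1\rangle}(n_{\langle 1\rangle}p_{\langle 1\rangle})$, and $1_{H}m_{\langle 1\rangle}=m_{\langle 1\rangle}$), not on the coassociativity and counitality of $\Delta$, which is what one would invoke in the dual example $H$-$\boldsymbol{Mod}$.
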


\begin{fact}[{The category $[\boldsymbol{A},\boldsymbol{A}].$}]
\label{Cat[A,A]}Let $\boldsymbol{A}$ be a small category, and let $[%
\boldsymbol{A},\boldsymbol{A}]$ denote the category of all endofunctors of $%
\boldsymbol{A}$. Therefore, the objects in $[\boldsymbol{A},\boldsymbol{A}]$
are functors $F:\boldsymbol{A}\rightarrow \boldsymbol{A},$ while the set $%
_{F}[\boldsymbol{A},\boldsymbol{A}]_{G}$ contains all natural
transformations $\mu :G\rightarrow F$. The composition in this category is
the composition of natural transformations. The category $[\boldsymbol{A},%
\boldsymbol{A}]$ is monoidal with respect to the composition of functors. If
$\mu :F\rightarrow G$ and $\mu ^{\prime }:F^{\prime }\rightarrow G^{\prime }$
are natural transformations, then the natural transformations $\mu F^{\prime
}$ and $G\mu ^{\prime }$ are given by
\begin{align*}
\mu F^{\prime }:F\circ F^{\prime }\rightarrow G\circ G^{\prime }& ,\qquad
\left( \mu F^{\prime }\right) _{x}:=\mu _{F^{\prime }(x)}, \\
G\mu ^{\prime }:G\circ F^{\prime }\rightarrow G\circ G^{\prime }& ,\qquad
\left( G\mu ^{\prime }\right) _{x}:=G(\mu _{x}^{\prime }).
\end{align*}%
We can now define the tensor product of $\mu $ and $\mu ^{\prime }$ by
\begin{equation*}
\mu \otimes \mu ^{\prime }:=G\mu ^{\prime }\circ \mu F^{\prime }=\mu
G^{\prime }\circ F\mu ^{\prime }.
\end{equation*}%
Even if $\boldsymbol{A}$ is $S$-distributive, $[\boldsymbol{A},\boldsymbol{A}%
]$ may not have this property. In spite of the fact that, by assumption, any
$S$-indexed family in $[\boldsymbol{A},\boldsymbol{A}]$ has a coproduct, in
general this does not commute with the composition of functors.
Nevertheless, as we have already noticed, $[\boldsymbol{A},\boldsymbol{A}]$
is $S$-distributive if $\left\vert S\right\vert =1$.

This remark will allow us to apply our main results to an $[\boldsymbol{A},%
\boldsymbol{A}]$-category $\boldsymbol{C}$ with one object $x$. Hence $%
F:={}_{x}{C}_{x}$ is an endofunctor of $\boldsymbol{A}$, and the composition
and the identity morphisms in $\boldsymbol{C}$ are uniquely defined by
natural transformations
\begin{equation*}
\mu :F\circ F\rightarrow F\qquad \text{and}\qquad \iota :\text{Id}_{%
\boldsymbol{A}}\rightarrow F.
\end{equation*}%
The commutativity of the diagrams in Figure~\ref{ec: M-categorie} is
equivalent in this case with the fact that $\left( F,\mu ,\iota \right) $ is
a \emph{monad}, see \cite{Be} for the definition of monads. In conclusion,
monads are in one-to-one correspondence to $[\boldsymbol{A},\boldsymbol{A}]$%
-categories with one object.
\end{fact}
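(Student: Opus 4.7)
The plan is to unpack both notions in parallel and check that they determine each other via a direct translation. An $[\boldsymbol{A},\boldsymbol{A}]$-category $\boldsymbol{C}$ with a single object $x$ consists, by definition, of an endofunctor $F:={}_{x}C_{x}$, together with morphisms ${}_{x}c_{x}^{x}:F\otimes F\rightarrow F$ and $1_{x}:\mathbf{1}\rightarrow F$ in $[\boldsymbol{A},\boldsymbol{A}]$. Since the tensor product on $[\boldsymbol{A},\boldsymbol{A}]$ is composition of endofunctors and its unit object is $\mathrm{Id}_{\boldsymbol{A}}$, these morphisms are exactly natural transformations $\mu:F\circ F\rightarrow F$ and $\iota:\mathrm{Id}_{\boldsymbol{A}}\rightarrow F$. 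So there is already a bijection at the level of underlying data.

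Next, I would verify that the defining diagrams in Figure~\ref{ec: M-categorie}, specialised to $x=y=z=t$, coincide with the monad axioms. The associativity square involves the arrows ${}_{x}c_{x}^{x}\otimes {}_{x}C_{x}$ and ${}_{x}C_{x}\otimes {}_{x}c_{x}^{x}$; under the identification of $\otimes$ with composition of functors, and of an object with its identity natural transformation, these become $\mu F$ and $F\mu$ respectively (as introduced in \S\ref{Cat[A,A]}). Commutativity of the square therefore reads $\mu \circ \mu F = \mu \circ F\mu$, which is exactly the associativity axiom of a monad. Similarly, the unit triangle translates into $\mu\circ \iota F = \mathrm{id}_{F} = \mu\circ F\iota$, the two unit axioms for a monad.

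Conversely, given a monad $(F,\mu,\iota)$, one reverses the dictionary: pick a one-element object class $\{x\}$, set ${}_{x}C_{x}:=F$, ${}_{x}c_{x}^{x}:=\mu$ and $1_{x}:=\iota$, and observe that the two monad axioms are precisely the commutativity of the two diagrams in Figure~\ref{ec: M-categorie} specialised at $x=y=z=t$. Since the two assignments are manifestly inverse to each other, the claimed bijective correspondence follows.

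No serious obstacle arises. The only point worth mentioning is that the $S$-distributivity requirement imposed throughout the paper is automatic here because $|S|=1$, a remark already made in the excerpt; consequently one does not need to assume anything extra about coproducts in $\boldsymbol{A}$. Moreover, the tensor product on $[\boldsymbol{A},\boldsymbol{A}]$ is strictly associative and strictly unital, so no associator or unitor needs to be inserted, and the translation between the enriched-category diagrams and the monad axioms is term-by-term.
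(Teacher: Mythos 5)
Your proposal is correct and follows essentially the same route as the paper: unpack the one-object enriched-category data as $(F,\mu,\iota)$ via the identification of $\otimes$ with functor composition, and check that the two diagrams of Figure~\ref{ec: M-categorie} specialise term-by-term to the monad associativity and unit axioms. The paper states this correspondence without spelling out the verification, and your explicit computation of $\mu\otimes\mathrm{id}_F=\mu F$, $\mathrm{id}_F\otimes\mu=F\mu$, etc., fills in exactly the intended details.
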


\begin{fact}[The category $\boldsymbol{Opmon(M).}$]
Let $\left( \boldsymbol{M},\otimes ,1\right) $ be a monoidal category. An
opmonoidal functor is a triple $\left( F,\delta ,\varepsilon \right) $ that
consists of

\begin{enumerate}
\item A functor $F:\boldsymbol{M}\rightarrow \boldsymbol{M}$.

\item A natural transformation $\delta :=\left\{ \delta _{x,y}\right\}
_{(x,y)\in {M}_{0}\times {M}_{0}}$, with $\delta _{x,y}:F\left( x\otimes
y\right) \rightarrow F\left( x)\otimes F(y\right) .$

\item A map $\varepsilon :F(\mathbf{1})\rightarrow \mathbf{1}$ in $%
\boldsymbol{M}.$
\end{enumerate}

\noindent In addition, the transformations $\delta $ and $\varepsilon $ are
assumed to render commutative the diagrams in Figure~\ref{fig:opmon}. An
opmonoidal transformation $\alpha :(F,\delta ,\varepsilon )\rightarrow
(F^{\prime },\delta ^{\prime },\varepsilon ^{\prime })$ is a natural map $%
\alpha :F\rightarrow F^{\prime }$ such that, for arbitrary objects $x$ and $%
y $ in $\boldsymbol{M},$
\begin{equation*}
(\alpha _{x}\otimes \alpha _{y})\circ \delta _{x,y}=\delta _{x,y}^{\prime
}\circ \alpha _{x\otimes y}\qquad \text{and\qquad }\varepsilon ^{\prime
}\circ \alpha _{\mathbf{1}}=\varepsilon .
\end{equation*}%
Obviously the composition of two opmonoidal transformations is opmonoidal,
and the identity of an opmonoidal functor is an opmonoidal transformation.
The resulting category will be denoted by $\boldsymbol{Opmon(M)}.$ For two
opmonoidal functors $(F,\delta ,\varepsilon )$ and $(F^{\prime },\delta
^{\prime },\varepsilon ^{\prime })$ one defines
\begin{equation*}
(F,\delta ,\varepsilon )\otimes (F^{\prime },\delta ^{\prime },\varepsilon
^{\prime }):=\left( F\circ F^{\prime },\delta _{F^{\prime },F^{\prime
}}\circ F(\delta ^{\prime }),\varepsilon \circ F(\varepsilon ^{\prime
})\right) ,
\end{equation*}%
where $\delta _{F^{\prime },F^{\prime }}=\left\{ \delta _{F^{\prime
}(x),F^{\prime }(y)}\right\} _{x,y\in M_{0}}$. On the other hand, if $\mu
:F\rightarrow G$ and $\mu ^{\prime \prime }\rightarrow G^{\prime }$ are
opmonoidal transformations, then $\mu \otimes \mu ^{\prime }:=\mu G^{\prime
}\circ F\mu ^{\prime }$ is opmonoidal too. One can see easily that $\otimes $
defines a monoidal structure on $\boldsymbol{Opmon(M)}$ with unit object $(%
\text{Id}_{\boldsymbol{M}},\left\{ \text{Id}_{x\otimes y}\right\} _{x,y\in
M_{0}},\text{Id}_{\mathbf{1}}).$
\begin{figure}[tbh]
\begin{equation*}
\xymatrix{ \sv{F(x\otimes y\otimes z) \ar[rr]^-{\delta_{x\otimes
y,z}}}\ar[d]|-{\sa{\delta_{x, y\otimes z}}} & & \sv{F(x\otimes y)\otimes
F(z)} \ar[d]|-{\sa{\delta_{x, y}\otimes F(z)}}\\ \sv{F(x)\otimes F(y\otimes
z)}\ar[rr]_-{\sa{F(x)\otimes \delta_{y,z}}} & & \sv{F(x)\otimes F(y)\otimes
F(z) }}\qquad \xymatrix{ \sv{F(x)\otimes F(\mathbf{1})}
\ar[r]^-{\sa{F(x)\otimes\varepsilon_{\mathbf{1}}}} & \sv{F(x)} &
\sv{F(\mathbf{1})\otimes F(x)}\ar[l]_-{\sa{\varepsilon_{\mathbf{1}}\otimes
F(x)}}\\ & \sv{F(x)}\ar[ul]^-{\delta_{x,\mathbf{1}}}
\ar[ur]_-{\sa{\delta_{\mathbf{1},x}}}\ar@{=}[u] & }
\end{equation*}%
\caption{The definition of opmonoidal functors.}
\label{fig:opmon}
\end{figure}
\end{fact}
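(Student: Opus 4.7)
The plan is to verify each piece of the prospective monoidal structure on $\boldsymbol{Opmon(M)}$ in turn, with the strict composition of endofunctors supplying both the tensor product and the associativity and unit constraints.

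First, for opmonoidal functors $(F,\delta,\varepsilon)$ and $(F',\delta',\varepsilon')$ I would check that the triple $(F\circ F',\delta_{F',F'}\circ F(\delta'),\varepsilon\circ F(\varepsilon'))$ is itself opmonoidal. The coassociativity diagram of Figure~\ref{fig:opmon} applied to the composite comultiplication splits into two sub-diagrams: the image under $F$ of the coassociativity diagram for $\delta'$ (which commutes by functoriality of $F$) and the coassociativity diagram for $\delta$ evaluated at $F'(x),F'(y),F'(z)$; the two pieces are glued together by the naturality of $\delta$ with respect to the components of $\delta'$. The counit triangles are handled analogously, by combining the image under $F$ of the counit triangles for $(\delta',\varepsilon')$ with the counit triangles for $(\delta,\varepsilon)$ at $F'(x)$.

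Second, I would verify that $\mu\otimes\mu':=\mu G'\circ F\mu'$ is an opmonoidal transformation, and that the identity $\mu G'\circ F\mu'=\mu F'\circ G\mu'$ invoked in the definition holds by the interchange law for whiskering. Compatibility of $\mu\otimes\mu'$ with the composite comultiplications decomposes into two commuting sub-diagrams, one expressing the opmonoidality of $\mu'$ (after applying $F$) and the other the opmonoidality of $\mu$, connected by naturality of $\delta$; compatibility with the counits is similar. Functoriality of $\otimes$ in each argument is once more the interchange law. The proposed unit $(\mathrm{Id}_{\boldsymbol{M}},\{\mathrm{Id}_{x\otimes y}\},\mathrm{Id}_{\mathbf{1}})$ is visibly opmonoidal as all the relevant diagrams reduce to identities, and tensoring it on either side with any $(F,\delta,\varepsilon)$ reproduces the latter on the nose.

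Third, since composition of endofunctors is strictly associative, the two iterated tensor products $((F,\delta,\varepsilon)\otimes(F',\delta',\varepsilon'))\otimes(F'',\delta'',\varepsilon'')$ and $(F,\delta,\varepsilon)\otimes((F',\delta',\varepsilon')\otimes(F'',\delta'',\varepsilon''))$ share the same underlying functor $F\circ F'\circ F''$; a further diagram chase combining functoriality of $F$ with naturality of $\delta$ shows that they also carry the same comultiplication and counit. The associator and both unit constraints may therefore be chosen to be identity transformations, so the pentagon and triangle axioms are vacuous. I expect the main technical obstacle to be the coassociativity verification of the composite comultiplication in the first step: one must separate carefully the contribution of $F(\delta')$, dispatched by functoriality and naturality, from that of $\delta_{F',F'}$, dispatched by the coassociativity of $\delta$ alone. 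Once that template is set, every remaining verification is a direct instance of the same pattern.
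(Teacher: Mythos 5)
Your proposal is correct and follows the only natural route: the paper itself offers no argument here, dismissing the verification with ``one can see easily that $\otimes$ defines a monoidal structure,'' and your outline (coassociativity of the composite comultiplication via coassociativity of $\delta'$, functoriality of $F$, naturality of $\delta$, and coassociativity of $\delta$; strict associators and unitors from strictness of functor composition) is exactly the routine check the authors intend the reader to supply. One small slip: the interchange identity should read $\mu G'\circ F\mu' = G\mu'\circ \mu F'$ rather than $\mu F'\circ G\mu'$, whose two factors do not compose in that order; this is clearly a typo and does not affect the argument.
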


\begin{fact}[The categories $\boldsymbol{Alg(M)}$ and $\boldsymbol{Coalg(M)}%
. $]
Let $\left( \boldsymbol{M},\otimes ,\mathbf{1},\chi\right) $ be a braided
monoidal category with braiding $\chi:=\left\{ \chi _{x,y}\right\} _{\left(
x,y\right) \in {M}_{0}\times {M}_{0}}$, where $\chi_{x,y}:x\otimes
y\rightarrow y\otimes x.$ The category $\boldsymbol{Alg(M)}$ of all algebras
in $\boldsymbol{M}$ is monoidal too. Recall that an algebra in $\boldsymbol{M%
}$ is an $\boldsymbol{M}$-category with one object. As in \S \ref{Cat[A,A]},
such a category is uniquely determined by an object $X$ in $\boldsymbol{M}$
and two morphisms $m:X\otimes X\rightarrow X$ (the multiplication) and $u:%
\mathbf{1}\rightarrow X$ (the unit). The commutativity of the diagrams in
Figure~\ref{ec: M-categorie} means that the algebra is associative and
unital. If $\left( X,m,u\right) $ and $\left( X^{\prime },m^{\prime
},u^{\prime }\right) $ are algebras in $\boldsymbol{M},$ then $X\otimes
X^{\prime }$ is an algebra in $\boldsymbol{M}$ with multiplication%
\begin{equation*}
\left( m\otimes m^{\prime }\right) \circ \left( X\otimes \chi_{X^{\prime
},X}\otimes X^{\prime }\right) :\left( X\otimes X^{\prime }\right) \otimes
\left( X\otimes X^{\prime }\right) \rightarrow X\otimes X^{\prime }
\end{equation*}%
and unit $u\otimes u^{\prime }:\mathbf{1}\rightarrow X\otimes X^{\prime }$.

The monoidal category $\boldsymbol{Coalg(M)}$ of coalgebras in $\boldsymbol{M%
}$ can be defined in a similar way. Alternatively, one may take $\boldsymbol{%
Coalg(M)}:=\boldsymbol{Alg(M^{o})^{o}}.$ Note that the monoidal category of
coalgebras in $\boldsymbol{M}$ and the monoidal category of algebras in $%
\boldsymbol{M}^{o}$ are opposite each other.

It is not hard to see that $\boldsymbol{Coalg(M)}$ is $S$-distributive,
provided that $\boldsymbol{M}$ is so.
\end{fact}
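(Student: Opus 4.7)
The plan is to lift the $S$-distributive structure of $\boldsymbol{M}$ to $\boldsymbol{Coalg(M)}$ in two steps: first constructing coproducts of coalgebras, then checking that the tensor product of coalgebras preserves them.

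Given a family $\{(C_i,\Delta_i,\varepsilon_i)\}_{i\in S}$ of coalgebras in $\boldsymbol{M}$, let $C:=\bigoplus_{i\in S}C_i$ with canonical inclusions $\sigma_i\colon C_i\to C$. I would define $\Delta\colon C\to C\otimes C$ and $\varepsilon\colon C\to \mathbf{1}$ as the unique morphisms satisfying $\Delta\circ\sigma_i=(\sigma_i\otimes\sigma_i)\circ\Delta_i$ and $\varepsilon\circ\sigma_i=\varepsilon_i$, respectively; existence and uniqueness come from the universal property of $C$ as a coproduct in $\boldsymbol{M}$, together with $S$-distributivity used to identify $C\otimes C$ with a coproduct of the $C_i\otimes C_j$. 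Verifying coassociativity and counitality of $(C,\Delta,\varepsilon)$ reduces, after precomposing with each $\sigma_i$, to the analogous identities for each $(C_i,\Delta_i,\varepsilon_i)$. The same technique shows that given coalgebra morphisms $f_i\colon C_i\to E$, the induced $\boldsymbol{M}$-morphism $f\colon C\to E$ is itself a coalgebra morphism, so $C$ is the coproduct in $\boldsymbol{Coalg(M)}$.

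Next, for a coalgebra $C$ and a family $\{D_i\}_{i\in S}$ of coalgebras, $S$-distributivity of $\boldsymbol{M}$ provides a canonical iso $\phi\colon\bigoplus_i(C\otimes D_i)\to C\otimes\bigoplus_i D_i$ with $\phi\circ\tau_i=C\otimes\sigma_i$, where $\tau_i$ is the inclusion into the left-hand coproduct. I would show that $\phi$ is an isomorphism in $\boldsymbol{Coalg(M)}$, the domain carrying the coproduct coalgebra structure just constructed and the codomain carrying the tensor-product coalgebra structure defined via the braiding $\chi$. By the universal property in $\boldsymbol{Coalg(M)}$ it suffices to check that each $C\otimes\sigma_i$ is a coalgebra morphism from $C\otimes D_i$ to $C\otimes\bigoplus_j D_j$; compatibility with counits is immediate from $\varepsilon\circ\sigma_i=\varepsilon_i$, and compatibility with comultiplications follows by combining $\Delta\circ\sigma_i=(\sigma_i\otimes\sigma_i)\circ\Delta_i$ with the naturality of $\chi$ applied to $\sigma_i\colon D_i\to\bigoplus_j D_j$. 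The right-distributivity is proved symmetrically, using naturality of $\chi$ in the other argument.

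The main obstacle is purely bookkeeping: correctly unfolding the tensor-product coalgebra structure in the second step and tracking that the canonical identifications provided by $S$-distributivity in $\boldsymbol{M}$ commute with the braiding. Everything reduces to naturality of $\chi$ against inclusions $\sigma_i$, which is precisely where the braided hypothesis on $\boldsymbol{M}$ is used.
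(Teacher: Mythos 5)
Your proposal is correct and matches the paper's approach: the paper asserts the claim without proof here, but the coproduct-of-coalgebras construction you give (comultiplication and counit induced via $\Delta\circ\sigma_i=(\sigma_i\otimes\sigma_i)\circ\Delta_i$ and $\varepsilon\circ\sigma_i=\varepsilon_i$, with inclusions and induced maps checked to be coalgebra morphisms) is exactly what appears later in the proof of the proposition on the bicrossed product being enriched over $\boldsymbol{Coalg(M^{\prime})}$. Your second step, verifying via naturality of the braiding that the canonical isomorphism $\bigoplus_i(C\otimes D_i)\cong C\otimes\bigoplus_i D_i$ is a coalgebra map, is the routine distributivity check that the paper leaves entirely implicit, and it is carried out correctly.
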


\section{Factorizable $\boldsymbol{M}$-categories and
twisting systems.}

In this section we define factorizable $\boldsymbol{M}$-categories and
twisting systems. We shall prove that to every factorizable system
corresponds a certain twisting system. Under a mild extra assumption on the
monoidal category $\boldsymbol{M}$, we shall also produce enriched
categories using a special class of twisting systems that we call simple.

Throughout this section $S$ denotes a fixed set. We assume that all $%
\boldsymbol{M}$-categories that we work with are small, and that their set
of objects is $S$.

\begin{fact}[Factorizable $\boldsymbol{M}$-categories.]
Let $\boldsymbol{C}$ be a small enriched category over $(\boldsymbol{M}%
,\otimes ,\mathbf{1})$. We assume that $\boldsymbol{M}$ is $S$-distributive.
Suppose that $\boldsymbol{A}$ and $\boldsymbol{B}$ are $\boldsymbol{M}$%
-subcategories of $\boldsymbol{C}.$ Note that, by assumption, $%
A_{0}=B_{0}=C_{0}=S.$ For $x,$ $y$ and $u$ in $S$ we define
\begin{equation}
_{x}\varphi _{y}^{u}:{}_{x}{A}{}_{u}{B}_{y}\rightarrow {}_{x}{C}_{y},\qquad
{}_{x}\varphi _{y}^{u}:={}_{x}c_{y}^{u}\circ {}_{x}\alpha _{u}\beta _{y},
\label{ec:phi^u}
\end{equation}%
where $\boldsymbol{\alpha }:\boldsymbol{A}\rightarrow \boldsymbol{C}$ and $%
\boldsymbol{\beta }:\boldsymbol{B}\rightarrow \boldsymbol{C}$ denote the
corresponding inclusion $\boldsymbol{M}$-functors. By the universal property
of coproducts, for every $x$ and $y$ in $S,$ there is $_{x}\varphi
_{y}:{}_{x}A_{\overline{u}}B{_{y}\rightarrow {}_{x}{C}_{y}}$ such that
\begin{equation}
_{x}\varphi _{y}\circ {}_{x}\sigma _{y}^{u}={}_{x}\varphi _{y}^{u},
\label{ec:phi}
\end{equation}%
where $_{x}\sigma _{y}^{u}$ is the canonical inclusion of $_{x}A_{u}B_{y}$
into ${}_{x}A_{\overline{u}}B_{y}.$ Note that by the universal property of
coproducts $_{x}\varphi _{y}={}_{x}c_{y}^{\overline{u}}\circ {}_{x}\alpha _{%
\overline{u}}\beta _{y},$ as we have $_{x}\alpha _{\overline{u}}\beta
_{y}\circ {}_{x}\sigma _{y}^{u}={}_{x}\tau _{y}^{u}\circ {}_{x}\alpha
_{u}\beta _{y}$ and $_{x}c_{y}^{\overline{u}}\circ {}_{x}\tau
_{y}^{u}={}_{x}c_{y}^{u},$ where $_{x}\tau _{y}^{u}$ denotes the inclusion
of $_{x}C_{u}C_{y}$ into ${}_{x}C_{\overline{u}}C_{y}.$

We shall say that $\boldsymbol{C}$ \emph{factorizes} through $\boldsymbol{A}$
and $\boldsymbol{B}$ if $_{x}\varphi _{y}$ is an isomorphism, for all $x$
and $y$ in $S$. By definition, an $\boldsymbol{M}$-category $\boldsymbol{C}$
is \emph{factorizable }if it factorizes through $\boldsymbol{A} $ and $%
\boldsymbol{B},$ where $\boldsymbol{A}$ and $\boldsymbol{B}$ are certain $%
\boldsymbol{M}$-subcategories of $\boldsymbol{C}.$
\end{fact}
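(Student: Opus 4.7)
The plan is to verify the equality ${}_{x}\varphi _{y}={}_{x}c_{y}^{\overline{u}}\circ {}_{x}\alpha _{\overline{u}}\beta _{y}$ by appealing to the universal property that singles out ${}_{x}\varphi_{y}$ among all morphisms out of the coproduct ${}_{x}A_{\overline{u}}B_{y}=\textstyle\bigoplus_{u\in S}{}_{x}A_{u}B_{y}$.

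First I would observe that, in view of \eqref{ec:phi}, the morphism ${}_{x}\varphi_{y}$ is the unique arrow ${}_{x}A_{\overline{u}}B_{y}\to{}_{x}C_{y}$ whose restriction along each canonical inclusion ${}_{x}\sigma_{y}^{u}$ coincides with ${}_{x}\varphi_{y}^{u}$. Hence it is enough to prove that the right-hand side ${}_{x}c_{y}^{\overline{u}}\circ{}_{x}\alpha_{\overline{u}}\beta_{y}$ restricts to ${}_{x}\varphi_{y}^{u}$ along each ${}_{x}\sigma_{y}^{u}$, and then to invoke uniqueness.

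To carry this out I would use the two compatibility relations encoded by the notational conventions of \S\ref{fa:not}: on the one hand, the induced map ${}_{x}\alpha_{\overline{u}}\beta_{y}$ is defined precisely so that ${}_{x}\alpha_{\overline{u}}\beta_{y}\circ{}_{x}\sigma_{y}^{u}={}_{x}\tau_{y}^{u}\circ{}_{x}\alpha_{u}\beta_{y}$, where ${}_{x}\tau_{y}^{u}$ is the inclusion of ${}_{x}C_{u}C_{y}$ into ${}_{x}C_{\overline{u}}C_{y}$; on the other hand, the composition ${}_{x}c_{y}^{\overline{u}}$ induced from the family $\{{}_{x}c_{y}^{u}\}_{u\in S}$ satisfies ${}_{x}c_{y}^{\overline{u}}\circ{}_{x}\tau_{y}^{u}={}_{x}c_{y}^{u}$. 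Splicing these two identities together yields $({}_{x}c_{y}^{\overline{u}}\circ{}_{x}\alpha_{\overline{u}}\beta_{y})\circ{}_{x}\sigma_{y}^{u}={}_{x}c_{y}^{u}\circ{}_{x}\alpha_{u}\beta_{y}={}_{x}\varphi_{y}^{u}$, as required.

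I do not anticipate any real obstacle: the whole argument is diagrammatic bookkeeping inside the coproduct formalism of \S\ref{fa:not}. The only mild care needed is to apply the two relations in the correct order, so that the codomain of one feeds into the domain of the next; once that is arranged, the uniqueness clause of the coproduct immediately forces ${}_{x}\varphi_{y}={}_{x}c_{y}^{\overline{u}}\circ{}_{x}\alpha_{\overline{u}}\beta_{y}$, which in turn justifies reading the factorization condition either as an isomorphism condition on each ${}_{x}\varphi_{y}$ or equivalently as the bijectivity of the induced composition ${}_{x}c_{y}^{\overline{u}}\circ{}_{x}\alpha_{\overline{u}}\beta_{y}$.
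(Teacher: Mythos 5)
Your argument is correct and is precisely the one the paper gives: you check that ${}_{x}c_{y}^{\overline{u}}\circ{}_{x}\alpha_{\overline{u}}\beta_{y}$ precomposed with each ${}_{x}\sigma_{y}^{u}$ equals ${}_{x}\varphi_{y}^{u}$ via the two splicing identities, and then invoke the uniqueness clause of the coproduct's universal property. Nothing is missing.
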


\begin{fact}[The twisting system associated to a factorizable $\boldsymbol{M}
$-category.]
Let $\boldsymbol{C}$ be an enriched category over a monoidal category $(%
\boldsymbol{M},\otimes ,\boldsymbol{1})$. We assume that $\boldsymbol{M}$ is
$S$-distributive. The family ${R}:=\{_{x}R_{z}^{y}\}_{x,y,z\in S}$ of
morphisms $_{x}R_{z}^{y}:{}_{x}{B}_{y}A_{z}\rightarrow {}_{x}A_{\overline{u}%
}B_{y}$ is called a \emph{twisting system} if the four diagrams in Figure %
\ref{fig:R_1} are commutative for all $x,$ $y,$ $z$ and $t$ in $S$.
\begin{figure}[tbh]
\centering
$%
\xymatrix{
\sv{_xB_yB_zA_t}
        \ar[d]|{\sa{RI\circ IR}}
        \ar[r]^-{\sa{bI}}
    & \sv{_xB_zA_t}
        \ar[d]|{R}\\
\sv{_xA_{\overline v}B_{\overline u}B_t}
        \ar@/_0px/[r]_-{\sa{Ib}}
    & \sv{_xA_{\overline v}B_t} }\quad
\xymatrix{
\sv{_xB_yA_zA_t}
        \ar[d]|{\sa{IR\circ RI}}
        \ar[r]^-{\sa{Ia}}
    & \sv{_xB_yA_t}
        \ar[d]|{R}\\
\sv{_xA_{\overline v}A_{\overline u}B_t}
        \ar@/_0px/[r]_-{\sa{aI}}
    & \sv{_xA_{\overline u}B_t} } \quad
\xymatrix{
\sv{{}_{x}A_{y}}
        \ar[r]^-{\sa{1^BI}}
        \ar[d]|{\sa{I 1^B}}
    &\sv{_xB_{x}A_{y}}
        \ar[d]|{\sa{R}}\\
\sv{_xA_{y}B_{y}}
        \ar[r]_-{\sa{\sigma}}
    &   \sv{_xA_{\overline u}B_y}}\quad
\xymatrix{
\sv{{}_{x}B_{y}}
        \ar[r]^-{\sa{I 1^A}}
        \ar[d]|{\sa{1^AI}}
    &\sv{_xB_yA_y}\ar[d]|{\sa{R}}\\
\sv{_xA_xB_y}\ar[r]_-{\sa{\sigma}}
    &\sv{_xA_{\overline u}B_y}} $%
\caption{The definition of twisting systems.}
\label{fig:R_1}
\end{figure}
\newline
Let us briefly explain the notation that we used in these diagrams. As a
general rule, we omit all subscripts and superscripts denoting elements in $%
S $, and which are attached to a morphism. The symbol $\otimes $ is also
omitted. For example, $a$ and $1^{A}$ (respectively $b$ and $1^{B}$) stand
for the suitable composition maps and identity morphisms in $\boldsymbol{A}$
(respectively $\boldsymbol{B}$). The identity morphism of an object in $%
\boldsymbol{M}$ is denoted by $I$. Thus, by $Ia:{}_{x}B_{y}A_{z}A_{t}%
\rightarrow {}_{x}B_{y}A_{t}$ we mean $_{x}B_{y}\otimes {}_{y}a_{t}^{z}$. On
the other hand, $aI:{}_{x}A_{\overline{v}}A_{\overline{u}}B_{t}\rightarrow
{}_{x}A_{\overline{u}}B_{t}$ is a shorthand notation for $_{x}a_{\overline{u}%
}^{\overline{v}}B_{t}$, which in turn is the unique map induced by $\left\{
_{x}\sigma _{t}^{u}\circ {}_{x}a_{u}^{v}B_{t}\right\} _{u,v\in S}.$ We shall
keep the foregoing notation in all diagrams that we shall work with.

We claim that to every factorizable $\boldsymbol{M}$-category $\boldsymbol{C}
$ corresponds a certain twisting system. By definition, the map $_{x}\varphi
_{y}$ constructed in (\ref{ec:phi}) is invertible for all $x$ and $y$ in $S.$
Let $_{x}\psi _{y}$ denote the inverse of $_{x}\varphi _{y}$. For $x$, $y$
and $z$ in $S$, we can now define
\begin{equation}
_{x}R_{z}^{y}:{}_{x}B_{y}A_{z}\rightarrow {}_{x}A_{\overline{u}}B_{z},\quad
_{x}R_{z}^{y}:={}_{x}\psi _{z}\circ {}_{x}c_{z}^{y}\circ {}_{x}\beta
_{y}\alpha _{z}.  \label{ec:R}
\end{equation}
\end{fact}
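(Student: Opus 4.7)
The claim is that the family $R = \{{}_xR_z^y\}_{x,y,z \in S}$ defined by equation (\ref{ec:R}) satisfies the four axioms encoded in Figure~\ref{fig:R_1}. My strategy exploits the fact that each ${}_x\varphi_y$ is an isomorphism, so two parallel morphisms with codomain ${}_xA_{\overline{u}}B_z$ agree as soon as they agree after post-composition with ${}_x\varphi_z$. Every axiom is thereby reduced to an identity in the hom-objects of $\boldsymbol{C}$, where associativity and unitality of composition are available together with the functoriality of the inclusion $\boldsymbol{M}$-functors $\boldsymbol{\alpha}$ and $\boldsymbol{\beta}$. The key identity, immediate from (\ref{ec:R}) and $\varphi\circ\psi=\mathrm{Id}$, is
\begin{equation*}
{}_x\varphi_z \circ {}_xR_z^y \;=\; {}_xc_z^y \circ ({}_x\beta_y \otimes {}_y\alpha_z).
\end{equation*}

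I would first dispose of the identity axioms (the third and fourth diagrams), as they are essentially direct. For the $1^B$-diagram, post-composing the upper-right path with ${}_x\varphi_y$ and applying the key identity gives ${}_xc_y^x \circ (({}_x\beta_x \circ 1^B_x) \otimes {}_x\alpha_y) = {}_xc_y^x \circ (1^C_x \otimes {}_x\alpha_y) = {}_x\alpha_y$, using the $\boldsymbol{M}$-functoriality of $\boldsymbol{\beta}$ and the unit law in $\boldsymbol{C}$. Post-composing the lower-left path with ${}_x\varphi_y$ yields, via (\ref{ec:phi}), ${}_x\varphi_y^y \circ (I \otimes 1^B_y) = {}_xc_y^y \circ ({}_x\alpha_y \otimes 1^C_y) = {}_x\alpha_y$ by a symmetric computation. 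The $1^A$-diagram is handled dually.

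For the composition axioms I treat the first diagram (compatibility of $R$ with composition in $\boldsymbol{B}$); the second is symmetric, with $\boldsymbol{\alpha}$ and $a$ in place of $\boldsymbol{\beta}$ and $b$. Post-composing both routes of the square with ${}_x\varphi_t$, the upper-right route collapses, via the key identity and functoriality of $\boldsymbol{\beta}$ applied to $b$, to ${}_xc_t^z \circ ({}_xc_z^y \otimes {}_z\alpha_t) \circ ({}_x\beta_y \otimes {}_y\beta_z \otimes {}_z\alpha_t)$. The lower-left route collapses, via two successive applications of the key identity (first on $IR$, then on $RI$) followed by functoriality of $\boldsymbol{\beta}$ applied to the outer $b$, to ${}_xc_t^y \circ ({}_x\beta_y \otimes ({}_yc_t^z \circ ({}_y\beta_z \otimes {}_z\alpha_t)))$. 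These two expressions coincide by associativity of composition in $\boldsymbol{C}$.

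The conceptual content is nothing more than associativity in $\boldsymbol{C}$; the main obstacle is disciplined bookkeeping with the coproducts ${}_xA_{\overline{v}}B_{\overline{u}}B_t$ that appear in the composition squares. Specifically, the key identity must be propagated through the coproduct when the second application of $R$ is made inside $RI \circ IR$, and ${}_x\varphi_z$ must be handled via its coproduct form ${}_xc_z^{\overline{u}} \circ ({}_x\alpha_{\overline{u}} \otimes {}_u\beta_z)$ recorded after equation (\ref{ec:phi}). Both steps are routine consequences of the universal property of coproducts and the $S$-distributivity of $\boldsymbol{M}$, but they require careful unfolding in the formal write-up.
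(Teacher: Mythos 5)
Your proposal is correct and follows essentially the same route as the paper: the paper likewise post-composes each path with the isomorphism ${}_x\varphi$, reduces everything to associativity and unitality in $\boldsymbol{C}$ together with the $\boldsymbol{M}$-functoriality of $\boldsymbol{\alpha}$ and $\boldsymbol{\beta}$, and handles the coproduct bookkeeping via the universal property (the paper's squares (A)--(C) are exactly the ``routine unfolding'' you defer). The only difference is presentational: the paper organizes the composition axiom as a chase through one large labelled diagram rather than as a chain of equations.
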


\begin{theorem}
\label{teo1} If $\boldsymbol{C}$ is a factorizable enriched category over an
$S$-distributive monoidal category $\boldsymbol{M}$, then the maps in (\ref%
{ec:R}) define a twisting system.
\end{theorem}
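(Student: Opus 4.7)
The approach is to exploit the invertibility of each ${}_x\varphi_y$: two morphisms into ${}_xA_{\overline{u}}B_y$ are equal if and only if they agree after postcomposition with ${}_x\varphi_y$. This reduces each of the four axioms in Figure~\ref{fig:R_1} to an equality of morphisms into ${}_xC_t$ or ${}_xC_y$, where the only ingredients available are the associativity and unitality of the composition $c$ in $\boldsymbol{C}$ together with the $\boldsymbol{M}$-functoriality of $\boldsymbol{\alpha}$ and $\boldsymbol{\beta}$. Indeed, from the definition (\ref{ec:R}) and the fact that ${}_x\psi_z$ is the two-sided inverse of ${}_x\varphi_z$, one reads off immediately the key identity
\begin{equation*}
{}_x\varphi_z \circ {}_xR_z^y = {}_xc_z^y \circ {}_x\beta_y\alpha_z,
\end{equation*}
which, together with the defining relation ${}_x\varphi_y = {}_xc_y^{\overline{u}} \circ {}_x\alpha_{\overline{u}}\beta_y$ inherited from (\ref{ec:phi^u})--(\ref{ec:phi}), expresses every relevant morphism purely in terms of $c$, $\boldsymbol{\alpha}$ and $\boldsymbol{\beta}$.

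For the first diagram (compatibility with the composition $b$ of $\boldsymbol{B}$), postcomposition with ${}_x\varphi_t$ converts both paths into morphisms built from $c$, $\boldsymbol{\alpha}$, $\boldsymbol{\beta}$, and $b$; these coincide by the $\boldsymbol{M}$-functoriality of $\boldsymbol{\beta}$ (which replaces $\beta \circ b$ by $c \circ (\beta \otimes \beta)$) and one application of the associativity of $c$. The second diagram is treated symmetrically, with the roles of $\boldsymbol{\alpha}$ and $\boldsymbol{\beta}$ interchanged and the composition $a$ in $\boldsymbol{A}$ in place of $b$. The two unit diagrams are even more transparent: after postcomposing with $\varphi$, both sides collapse to $\alpha$ (respectively $\beta$), using that $\boldsymbol{\alpha}$ and $\boldsymbol{\beta}$ preserve identity morphisms and that $1^C$ is a two-sided unit for $c$.

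The main technical obstacle is not the algebra but the bookkeeping with coproducts. The targets ${}_xA_{\overline{u}}B_z$ and intermediate objects such as ${}_xA_{\overline{v}}B_{\overline{u}}B_t$ are defined via the universal property of coproducts; consequently, equalities of morphisms into them must be verified summand-by-summand by precomposing with the canonical inclusions ${}_x\sigma_z^u$, and every time a tensor product is to be commuted past an overlined index one must invoke the $S$-distributivity hypothesis (compare (\ref{ec:suma-tensor})). Once these identifications are handled consistently, the four axioms for a twisting system follow from the algebraic facts assembled above.
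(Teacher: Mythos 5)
Your proposal is correct and follows essentially the same route as the paper: the paper likewise reduces each axiom to an identity in $\boldsymbol{C}$ by composing with ${}_x\varphi_t$ (equivalently, its inverse ${}_x\psi_t$), uses the key identity ${}_x\varphi_z\circ{}_xR_z^y={}_xc_z^y\circ{}_x\beta_y\alpha_z$, the $\boldsymbol{M}$-functoriality of $\boldsymbol{\alpha}$ and $\boldsymbol{\beta}$, and associativity/unitality of $c$, while the coproduct bookkeeping you flag is exactly what the paper's squares (A)--(C) handle via $S$-distributivity and the universal property.
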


\begin{proof}
Let us first prove that the first diagram in Figure \ref{fig:R_1} is
commutative. We fix $x,$ $y,$ $z$ and $t$ in $S,$ and we consider the
following diagram.
\begin{equation*}
\xymatrix{\ar@{}[drr] |*+[o][F-]{A} \sv{_xB_yA_{\overline u}B_t}
\ar[rr]^{\sa{\beta{}\alpha{}\beta{}}} \ar[d]|{\sa{\beta{}\alpha{}I}} &
\ar@{}[d]{} & \sv{_xC_yC_{\overline u}C_t} \ar@{}[ddddr]|-*+[o][F-]{F}
\ar[r]^{\sa{Ic}} \ar@{=}[d]{} & \sv{_xC_yC_t} \ar[dddd]|{c}\\
\sv{_xC_yC_{\overline u}B_t} \ar[rr]|{\sa{II\beta{}}} \ar@{}[drr]
|*+[o][F-]{B} \ar[d]|{\sa{cI}} & \ar@{}[d]{} & \sv{_xC_yC_{\overline u}C_t}
\ar[d]|{\sa{cI}} & \\ \sv{_xC_{\overline u}B_t} \ar@{}[drr]|*+[o][F-]{C}
\ar[rr]|{\sa{I\beta{}}} \ar[d]|<(.25){\sa{\psi I}} & \ar@{}[d]{} &
\sv{_xC_{\overline u}C_t} \ar@{=}[d]{} & \\ \sv{_xA_{\overline
v}B_{\overline u}B_t} \ar@{}[dr] |*+[o][F-]{D}
\ar@/^0px/[r]^-{\sa{\alpha{}\beta{}\beta{}}} \ar[d]|{\sa{Ib}} &
\sv{_xC_{\overline v}C_{\overline u}C_t} \ar@{}[dr] |*+[o][F-]{E}
\ar@/^0px/[r]|-{\sa{cI}} \ar[d]|{\sa{Ic}} & \sv{_xC_{\overline u}C_t}
\ar[d]|{\sa{c}} & \\ \sv{_xA_{\overline v}B_t} \ar[r]_{\sa{\alpha{}\beta{}}}
& \sv{_xC_{\overline v}C_t} \ar[r]_{\sa{c}} & \sv{_xC_t} \ar@{=}[r]{} &
\sv{_xC_t} }
\end{equation*}%
Since the tensor product in a monoidal category is a functor, that is in
view of \eqref{ec:TP=functor}, we have
\begin{equation}
_{x}C_{y}C_{u}\beta _{t}\circ {}_{x}\beta _{y}\alpha _{u}B_{t}={}_{x}\beta
_{y}\alpha _{u}\beta _{t},  \label{eq:ab}
\end{equation}%
for any $u$ in $S.$ Hence by the universal property of the coproduct and the
construction of the maps $_{x}C_{y}C_{\overline{u}}\beta _{t},{}$ $_{x}\beta
_{y}\alpha _{\overline{u}}B_{t}$ and ${}_{x}\beta _{y}\alpha _{\overline{u}%
}\beta _{t}$ we deduce that the relation which is obtained by replacing $u$
with $\overline{u}$ in (\ref{eq:ab}) holds true. This means that the square
(A) is commutative. Proceeding similarly one shows that (B) is commutative
as well. Furthermore, $_{x}c_{\overline{u}}^{\overline{v}}C_{t},$ $%
_{x}\alpha _{\overline{v}}\beta _{\overline{u}}\beta _{t}$ and $_{x}\psi _{%
\overline{u}}B_{t}$ are induced by $\left\{ _{x}c_{u}^{\overline{v}}\otimes
{}_{u}C_{t}\right\} _{u\in S},$ $\left\{ _{x}\alpha _{\overline{v}}\beta
_{u}\otimes {}_{u}\beta _{t}\right\} _{u\in S}$ and $\left\{ _{x}\psi
_{u}\otimes {}_{u}B_{t}\right\} _{u\in S},$ respectively. Hence their
composite $\lambda :={}_{x}c_{\overline{u}}^{\overline{v}}C_{t}\circ
{}_{x}\alpha _{\overline{v}}\beta _{\overline{u}}\beta _{t}\circ {}_{x}\psi
_{\overline{u}}B_{t}$ is induced by $\left\{ \lambda _{u}\right\} _{u\in S},$
where%
\begin{equation*}
\lambda _{u}=\left( _{x}c_{u}^{\overline{v}}\otimes {}_{u}C_{t}\right) \circ
\left( _{x}\alpha _{\overline{v}}\beta _{u}\otimes {}_{u}\beta _{t}\right)
\circ \left( _{x}\psi _{u}\otimes {}_{u}B_{t}\right) =\left( _{x}c_{u}^{%
\overline{v}}\circ {}_{x}\alpha _{\overline{v}}\beta _{u}\circ {}_{x}\psi
_{u}\right) \otimes {}_{u}\beta _{t}=\left( _{x}\varphi _{u}\circ {}_{x}\psi
_{u}\right) \otimes {}_{u}\beta _{t}.
\end{equation*}%
Since $_{x}\psi _{u}$ is the inverse of $_{x}\varphi _{u}$ it follows that $%
\lambda _{u}={}_{x}C_{u}\beta _{t},$ for every $u\in S.$ In conclusion
\begin{equation*}
_{x}c_{\overline{u}}^{\overline{v}}C_{t}\circ {}_{x}\alpha _{\overline{v}%
}\beta _{\overline{u}}\beta _{t}\circ {}_{x}\psi _{\overline{u}%
}B_{t}={}_{x}C_{\overline{u}}\beta _{t},
\end{equation*}%
so (C) is a commutative square. Since $\boldsymbol{\beta }$ is an $%
\boldsymbol{M}$-functor it follows that $\left\{ _{x}C_{v}c_{t}^{u}\circ
{}_{x}\alpha {}_{v}\beta _{u}\beta _{t}{}\right\} _{u,v\in S}$ and $\left\{
_{x}\alpha {}_{v}\beta _{t}\circ {}{}_{x}A_{v}b_{t}^{u}\right\} _{u,v\in S}$
are equal. Therefore these families induce the same morphism, that is
\begin{equation*}
_{x}C_{\overline{v}}c_{t}^{\overline{u}}\circ {}_{x}\alpha {}_{\overline{v}%
}\beta _{\overline{u}}\beta _{t}{}={}_{x}\alpha {}_{\overline{v}}\beta
_{t}\circ {}{}_{x}A_{\overline{v}}b_{t}^{\overline{u}}.
\end{equation*}%
Hence (D) is commutative too. Since the composition of morphisms in $%
\boldsymbol{C}$ is associative, we have
\begin{equation*}
_{x}c_{t}^{\overline{v}}\circ {}_{x}C_{\overline{v}}c_{t}^{\overline{u}%
}={}_{x}c_{t}^{\overline{u}}\circ {}_{x}c_{\overline{u}}^{\overline{v}%
}C_{t}\qquad \text{and\qquad }_{x}c_{t}^{y}\circ {}_{x}C_{y}c_{t}^{\overline{%
u}}={}_{x}c_{t}^{\overline{u}}\circ {}_{x}c_{\overline{u}}^{y}C_{t}.
\end{equation*}%
These equations imply that (E) and (F)\ are commutative. Summarizing, we
have just proved that all diagrams (A)-(F) are commutative. By diagram
chasing it results that the outer square is commutative as well, that is%
\begin{equation*}
{}_{x}\varphi _{t}\circ {}_{x}A_{\overline{v}}{b}_{t}^{\overline{u}}\circ
{}_{x}R_{\overline{u}}^{y}B_{t}={}_{x}c_{t}^{y}\circ {}_{x}\beta _{y}\varphi
_{t}.
\end{equation*}%
Left composing and right composing both sides of this equation by $_{x}\psi
_{t}$ and ${}_{x}B_{y}R_{t}^{z},$ respectively, yield%
\begin{align*}
{}_{x}A_{\overline{v}}b_{t}^{\overline{u}}\circ {}_{x}R_{\overline{u}%
}^{y}b_{t}\circ {}_{x}B_{y}R_{t}^{z}& ={}_{x}\psi _{t}\circ
{}_{x}c_{t}^{y}\circ {}_{x}\beta _{y}\varphi _{t}\circ {}_{x}B_{y}R_{t}^{z}
\\
& ={}_{x}\psi _{t}\circ {}_{x}c_{t}^{y}\circ {}_{x}\beta _{y}\varphi
_{t}\circ {}_{x}B_{y}\psi _{t}\circ {}_{x}B_{y}c_{t}^{z}\circ
{}_{x}B_{y}\beta _{z}\alpha _{t} \\
& ={}_{x}\psi _{t}\circ {}_{x}c_{t}^{y}\circ {}_{x}C_{y}{}c_{t}^{z}\circ
{}_{x}\beta _{y}\beta _{z}\alpha _{t},
\end{align*}%
where for the second and third relations we used the definition of $%
_{y}R_{t}^{z}$ and that $_{y}\varphi _{t}$ and $_{y}\psi _{t}$ are inverses
each other. On the other hand, the definition of ${}_{x}R_{t}^{z},$ the fact
that $\boldsymbol{\beta }$ is a functor and associativity of the composition
in $\boldsymbol{C\ }$imply the following sequence of identities
\begin{align*}
{}_{x}R_{t}^{z}\circ {}{}_{x}b_{z}^{y}A_{t}& ={}{}_{x}\psi _{t}\circ
{}_{x}c_{t}^{z}\circ {}_{x}\beta _{z}\alpha _{t}\circ {}{}_{x}b_{z}^{y}A_{t}
\\
& ={}_{x}\psi _{t}\circ {}_{x}c_{t}^{z}\circ {}_{x}c_{z}^{y}C_{t}\circ
{}{}_{x}\beta _{y}\beta _{z}\alpha _{t} \\
& ={}{}_{x}\psi _{t}\circ {}_{x}c_{t}^{y}\circ {}_{x}C_{y}{}c_{t}^{z}\circ
{}_{x}\beta _{y}\beta _{z}a_{t}.
\end{align*}%
In conclusion, the first diagram in Figure \ref{fig:R_1} is commutative.
Taking into account the definition of $_{x}R_{y}^{x},$ the identity $%
_{x}\beta _{x}\circ 1_{x}^{B}=1_{x}$ and the compatibility relation between
the composition and the identity morphisms in an enriched category, we get
the following sequence of equations%
\begin{equation*}
_{x}\varphi _{y}{}\circ {}_{x}R_{y}^{x}\circ 1_{x}^{B}A_{y}={}_{x}\varphi
_{y}{}\circ {}_{x}\psi _{y}{}\circ {}_{x}c_{y}^{x}{}\circ {}_{x}\beta
_{x}\alpha _{y}\circ 1_{x}^{B}A_{y}={}_{x}c_{y}^{x}{}\circ {}1_{x}\alpha
_{y}={}_{x}\alpha _{y}.
\end{equation*}%
Analogously, using the definition of $_{x}\varphi _{y}{}$ and the properties
of identity morphisms, we get
\begin{equation*}
_{x}\varphi _{y}{}\circ {}_{x}\sigma _{y}^{y}\circ
{}_{x}A_{y}1_{y}^{B}={}_{x}\varphi _{y}^{y}{}\circ
{}{}_{x}A_{y}1_{y}^{B}={}_{x}c_{y}^{y}{}\circ {}{}_{x}\alpha _{y}\beta
_{y}\circ {}_{x}A_{y}1_{y}^{B}={}_{x}c_{y}^{y}{}\circ {}{}_{x}\alpha
_{y}1_{y}={}_{x}\alpha _{y}.
\end{equation*}%
Since $_{x}\varphi _{y}{}$ is an isomorphisms, in view of the above
computations, it follows that the third diagram is commutative as well. One
can prove in a similar way that the remaining two diagrams in Figure \ref%
{fig:R_1} are commutative.
\end{proof}

\begin{fact}
\label{fa:R^tilda}We have noticed in the introduction that to every twisting
system of groups (or, equivalently, every matched pair of groups) one
associates a factorizable group. Trying to prove a similar result for a
twisting system ${R}$ between the $\boldsymbol{M}$-categories $\boldsymbol{B}
$ and $\boldsymbol{A}$ we have encountered some difficulties due to the fact
that, in general, the image of the map
\begin{equation*}
_{x}R_{z}^{y}:{}_{x}B_{y}A_{z}\rightarrow \textstyle\bigoplus\limits_{u\in
S}\,_{x}A_{u}B_{z}
\end{equation*}%
is not included into a summand $_{x}A_{u}B_{z},$ for some $u\in S$ that
depends on $x,$ $y$ and $z.$ For this reason, in this paper we shall
investigate only those twisting systems for which there are a function $%
|\cdots |:S^{3}\rightarrow S$ and the maps $_{x}\widetilde{R}%
_{z}^{y}:{}_{x}B_{y}A_{z}\rightarrow {}_{x}A_{|xyz|}B_{z}$ such that
\begin{equation}
_{x}R_{z}^{y}={}_{x}\sigma _{z}^{|xyz|}\circ {}_{x}\widetilde{R}_{z}^{y},
\label{R simplu}
\end{equation}%
for all $x,y,z,\in S.$ For them we shall use the notation $(\widetilde{{R}}%
,|\cdots |).$
\end{fact}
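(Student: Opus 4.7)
\textbf{Plan for Theorem \ref{teo1}.} The object to prove is that the family
$_{x}R_{z}^{y}:={}_{x}\psi _{z}\circ {}_{x}c_{z}^{y}\circ {}_{x}\beta _{y}\alpha _{z}$,
valued in the iterated coproduct $_{x}A_{\overline{u}}B_{z}=\bigoplus_{u\in S}{}_{x}A_{u}B_{z}$, satisfies the four axioms of Figure~\ref{fig:R_1}. The single key ingredient is that $_{x}\psi _{y}$ and $_{x}\varphi _{y}={}_{x}c_{y}^{\overline{u}}\circ {}_{x}\alpha _{\overline{u}}\beta _{y}$ are mutually inverse. Consequently, any equality of morphisms with codomain $_{x}A_{\overline{u}}B_{z}$ is equivalent, after postcomposing with the isomorphism $_{x}\varphi _{z}$, to an equality of morphisms with codomain $_{x}C_{z}$. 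I would therefore translate every diagram in Figure~\ref{fig:R_1} into an equivalent diagram inside $\boldsymbol{C}$ and check commutativity there, where only the structure of $\boldsymbol{C}$ (associativity and unitality of $c$) and the functoriality of the inclusions $\boldsymbol{\alpha },\boldsymbol{\beta }$ are in play.

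For the $B$-multiplicativity square, postcomposition with $_{x}\varphi _{t}$ and cancellation of intermediate $\varphi \psi =\mathrm{id}$'s reduces the equation to the identity $_{x}c_{t}^{y}\circ {}_{x}C_{y}c_{t}^{z}\circ {}_{x}\beta _{y}\beta _{z}\alpha _{t}={}_{x}c_{t}^{z}\circ {}_{x}c_{z}^{y}C_{t}\circ {}_{x}\beta _{y}\beta _{z}\alpha _{t}$, which follows at once from the functoriality of $\boldsymbol{\beta }$ (turning $b$ into $c$) together with the associativity of $c$. The $A$-multiplicativity square is proved symmetrically using the functoriality of $\boldsymbol{\alpha }$ and the other associativity of $c$. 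For the two unitality squares the same postcomposition with $_{x}\varphi $ reduces everything to identities in $\boldsymbol{C}$ coming from $_{x}\alpha _{x}\circ 1_{x}^{A}=1_{x}$ and $_{x}\beta _{x}\circ 1_{x}^{B}=1_{x}$ together with the unit axioms of $\boldsymbol{C}$.

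A separate point worth flagging, since it is the whole reason \S\ref{fa:R^tilda} introduces simple twisting systems, concerns the target of $_{x}R_{z}^{y}$. Because $_{x}\psi _{z}$ is an isomorphism onto the full coproduct $_{x}A_{\overline{u}}B_{z}$, the image of $_{x}R_{z}^{y}$ generally meets many summands at once, and the proof produces no function $|\cdots |\colon S^{3}\to S$ and no map $_{x}\widetilde{R}_{z}^{y}\colon {}_{x}B_{y}A_{z}\to {}_{x}A_{|xyz|}B_{z}$ satisfying the factorization \eqref{R simplu}. The only place where a single summand is singled out is in the unitality squares, where the maps $1_{x}^{B}\otimes I$ and $I\otimes 1_{y}^{A}$ decorate the index $u$ with a definite value ($x$ and $y$ respectively), so $\sigma $ there lands in a specific summand automatically; for the multiplicativity squares no such cancellation occurs, and this is exactly the general obstruction that motivates the restriction to the simple case.

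The principal bookkeeping burden is therefore that every object in Figure~\ref{fig:R_1} is an iterated coproduct (e.g.\ $_{x}A_{\overline{v}}B_{\overline{u}}B_{t}$), and every morphism out of such a coproduct is determined by its components on the individual summands. The $S$-distributivity of $\boldsymbol{M}$ is invoked repeatedly to interchange tensor products with these coproducts and to reduce each identity to a family of identities indexed by tuples in $S$; this is where one must be careful, since the coproduct inclusions $_{x}\sigma $ and $_{x}\tau $ have to be threaded through the calculation as in the author's diagram~(A)--(F). Once that is set up, the commutativity of each outer square in Figure~\ref{fig:R_1} follows by a routine diagram chase.
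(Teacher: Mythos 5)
The statement you were asked to prove, \S\ref{fa:R^tilda}, is not a theorem but a definition: it introduces the class of twisting systems $(\widetilde{R},|\cdots |)$ admitting the factorization (\ref{R simplu}), prefaced by the observation that a general twisting system need not admit one. The paper gives no proof of it, and the factorization (\ref{R simplu}) is an \emph{additional hypothesis} singling out the twisting systems studied in the rest of the paper, not something derivable from Theorem \ref{teo1}. Your proposal is, by your own heading, a proof plan for Theorem \ref{teo1} instead; as such it does reproduce the paper's actual argument for that theorem (postcompose with the isomorphisms $_{x}\varphi _{t}$, cancel the intermediate $\varphi \circ \psi =\mathrm{id}$'s, and reduce each square of Figure \ref{fig:R_1} to associativity and unitality in $\boldsymbol{C}$ plus the functoriality of $\boldsymbol{\alpha }$ and $\boldsymbol{\beta }$, with $S$-distributivity threading the coproduct inclusions through the diagram chase (A)--(F)). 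So there is no mathematical gap in what you wrote, but most of it is aimed at a different statement.

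The part of your proposal that does bear on \S\ref{fa:R^tilda} is the third paragraph, and it captures the only assertive content of the statement correctly: since $_{x}\psi _{z}$ is an isomorphism onto the full coproduct $_{x}A_{\overline{u}}B_{z}$, nothing in the construction of $_{x}R_{z}^{y}$ selects a single summand, which is exactly why the paper restricts attention to the simple case. One small slip there: in the unit diagrams the forced indices are interchanged in your text. The square involving $1_{x}^{B}\otimes {}_{x}A_{y}$ forces the image into the summand with $u=y$ (its right-hand side is $_{x}\sigma _{y}^{y}\circ ({}_{x}A_{y}\otimes 1_{y}^{B})$), while the one involving $_{x}B_{y}\otimes 1_{y}^{A}$ forces $u=x$.
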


\begin{proposition}
\label{thm R simplu} Let $\boldsymbol{M}$ be a monoidal category which is $S$%
-distributive. Let $|\cdots |:S^{3}\rightarrow S$ and $\{_{x}\widetilde{R}%
_{z}^{y}\}_{x,y,z\in S}$ be a function and a set of maps as above. The
family $\{_{x}R_{z}^{y}\}_{x,y,z\in S}$ defined by (\ref{R simplu}) is a
twisting system if and only if, for any $x,y,z,t\in S,$ the following
relations hold:
\begin{gather}
_{x}\sigma _{t}^{|xy|yzt||}\circ {}_{x}A_{|xy|yzt||}b_{t}^{|yzt|}\circ {}_{x}%
\widetilde{R}_{|yzt|}^{y}B_{t}{}\circ {}_{x}B_{y}\widetilde{R}%
_{t}^{z}={}_{x}\sigma _{t}^{|xzt|}\circ {}{}_{x}\widetilde{R}_{t}^{z}\circ
{}_{x}b_{z}^{y}A_{t},  \label{Rs-1} \\
_{x}\sigma _{t}^{|xyz|zt||}\circ {}_{x}a_{||xyz|zt|}^{|xyz|}B_{t}\circ
{}_{x}A_{|xyz|}\widetilde{R}_{t}^{z}\circ {}_{x}\widetilde{R}%
_{z}^{y}A_{t}={}_{x}\sigma _{t}^{|xyt|}\circ {}_{x}\widetilde{R}_{t}^{y}%
\circ{}_{x}B_{y}a_{t}^{z},  \label{Rs-2} \\
{}_{x}\sigma _{y}^{|xxy|}\circ {\ }_{x}\widetilde{R}_{y}^{x}\circ
(1_{x}^{B}\otimes {}_{x}A_{y})={}_{x}\sigma _{y}^{y}\circ(_{x}A_{y}\otimes
1_{y}^{B}),  \label{Rs-3} \\
_{x}\sigma _{y}^{|xyy|}\circ {}_{x}\widetilde{R}_{y}^{y}\circ
(_{x}B_{y}\otimes 1_{y}^{A})={}_{x}\sigma _{y}^{x}\circ (1_{x}^{A}\otimes
{}_{x}B_{y}).  \label{Rs-4}
\end{gather}
\end{proposition}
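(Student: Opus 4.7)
The plan is to treat the four twisting system diagrams in Figure \ref{fig:R_1} one at a time, substitute $_xR_z^y={}_x\sigma_z^{|xyz|}\circ {}_x\widetilde{R}_z^y$, and simplify using the universal property of coproducts together with the $S$-distributivity of $\boldsymbol{M}$. The substitutions should collapse each diagrammatic identity to exactly one of the four equations (Rs-1)--(Rs-4), so both implications follow simultaneously from the same string of rewritings, read forward for the ``only if'' direction and backward for the ``if'' direction.

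Let me sketch the first case, which corresponds to (Rs-1). The left-hand side of the first commutative square is
\begin{equation*}
({}_xA_{\overline{v}}b_t^{\overline{u}}) \circ ({}_xR_{\overline{u}}^y B_t) \circ ({}_xB_y R_t^z).
\end{equation*}
Replacing the rightmost $R$ with $\sigma\widetilde{R}$ and using that $_xR_{\overline{u}}^y$ is by construction the unique extension with $_xR_{\overline{u}}^y \circ {}_xB_y\sigma_t^{u}$ agreeing (up to the canonical inclusion $\iota$ into $_xA_{\overline{v}}B_{\overline{u}}B_t$) with $_xR_u^y B_t$, one gets
\begin{equation*}
({}_xA_{\overline{v}}b_t^{\overline{u}})\circ \iota\circ({}_x\sigma_{|yzt|}^{|xy|yzt||}B_t)\circ({}_x\widetilde{R}_{|yzt|}^y B_t)\circ({}_xB_y\widetilde{R}_t^z).
\end{equation*}
Now $_xA_{\overline{v}}b_t^{\overline{u}}$ restricts on the $u=|yzt|$ summand to $_xA_{\overline{v}}b_t^{|yzt|}$, so after telescoping the inclusions into the single inclusion $_x\sigma_t^{|xy|yzt||}$ of the outer coproduct, the LHS becomes the LHS of (Rs-1). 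A completely parallel argument, using $R={}_x\sigma_t^{|xzt|}\circ {}_x\widetilde{R}_t^z$ on the right side of the diagram, identifies its RHS with the RHS of (Rs-1). The remaining three diagrams are handled in the same way: diagram~2 unwinds to (Rs-2) after applying $_xR_z^y={}_x\sigma\widetilde{R}$ twice and commuting with $_xa$, and the two ``unit'' diagrams collapse to (Rs-3) and (Rs-4) in a single step because the unit morphisms $1_x^A,1_x^B$ do not interact with the coproduct summation.

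The converse direction uses the same chain of equalities run in reverse. Since each (Rs-$i$) gives an equation of morphisms factoring through specific inclusions $_x\sigma$ into the coproduct $_xA_{\overline{u}}B_t$ (or $_xA_{\overline{u}}B_y$ for the unit diagrams), re-applying the universal property repackages them as the corresponding diagrammatic identity of Figure \ref{fig:R_1}.

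The main obstacle is purely bookkeeping: repeatedly commuting inclusion maps $\sigma$ past tensor products and past induced maps on coproducts, and verifying that, at each step, the outer coproduct structure of $_xA_{\overline{v}}B_{\overline{u}}B_t$ and of $_xA_{\overline{v}}B_t$ is preserved. The core technical lemma one uses over and over is the identity $({}_xR_{\overline{u}}^y B_t)\circ({}_xB_y\sigma_t^{u})= \iota\circ({}_xR_u^y B_t)$ (and its analogue for $b$ and for $a$), which is an immediate consequence of the defining universal property of $_xR_{\overline{u}}^y$ together with distributivity of $\otimes$ over coproducts. Once this bookkeeping is set up, the four equivalences fall out essentially by inspection.
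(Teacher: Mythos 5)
Your proposal is correct and follows essentially the same route as the paper: the authors also substitute $_xR_z^y={}_x\sigma_z^{|xyz|}\circ{}_x\widetilde{R}_z^y$ into each diagram of Figure \ref{fig:R_1} and reduce it to the corresponding relation (\ref{Rs-1})--(\ref{Rs-4}) by a diagram chase whose only nontrivial squares encode exactly your ``core technical lemma,'' namely the compatibility of the induced maps $_xR_{\overline{u}}^yB_t$ and $_xA_{\overline{v}}b_t^{\overline{u}}$ with the canonical inclusions. Both directions are obtained simultaneously from this chase, just as you describe.
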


\begin{proof}
We claim that $\{_{x}\widetilde{R}_{z}^{y}\}_{x,y,z\in S}$ satisfy (\ref%
{Rs-1}) if and only if $\{_{x}{R}_{z}^{y}\}_{x,y,z\in S}$ render commutative
the first diagram in \ Figure \ref{fig:R_1}. Indeed, let us consider the following diagram.
\begin{equation*}
\xymatrix{ \sv{_xB_yB_zA_t}\ar[r]^-{I\widetilde{R}}\ar[d]_-{bI}
&\sv{_xB_yA_{|yzt|}B_t}\ar[r]^-{I\sigma}\ar[d]_-{\widetilde{R}I}
&\sv{_xB_yA_{\overline{u}}B_t}\ar[d]^-{{R}I}\ar@{}[dl]^>(.30)*+[o][F-]{B} \\
\sv{_xB_zA_t}\ar[dd]_-{\widetilde{R}}
&\sv{_xA_{|xy|yzt||}B_{|yzt|}B_t}\ar[r]^-{\sigma I}\ar[d]_-{Ib}
&\sv{_xA_{\overline{v}}B_{\overline{u}}B_t}
\ar[d]^-{Ib}\ar@{}[dl]^>(.30)*+[o][F-]{C} \\ \ar@{}[dr]^>(.40)*+[o][F-]{A}
&\sv{_xA_{|xy|yzt||}B_t}\ar[r]^-{\sigma}
&\sv{_xA_{\overline{v}}B_t}\ar@{=}[d] \\
\sv{_xA_{|xzt|}B_t}\ar[rr]_-{\sigma} & &\sv{_xA_{\overline{v}}B_t} \\ }
\end{equation*}%
The squares (B) and (C) are commutative by the definition of $_{x}R_{%
\overline{u}}^{y}:{}_{x}B_{y}A_{\overline{u}}\rightarrow {}_{x}A_{\overline{v%
}}B_{\overline{u}}$ and $_{\overline{v}}b_{t}^{\overline{u}}:{}_{\overline{v}%
}B_{\overline{u}}B_{t}\rightarrow {}_{\overline{v}}B_{t}$. Hence the hexagon
(A) is commutative if and only if the outer square is commutative. This
proves our claim as (A) and the outer square in Figure \ref{fig:R_1} are
commutative if and only if \eqref{Rs-1} holds and the first diagram in
Figure \ref{fig:R_1} is commutative, respectively. Similarly one shows that
the commutativity of the second diagram from Figure \ref{fig:R_1} is
equivalent to \eqref{Rs-2}. On the other hand, obviously, the third and
fourth diagrams in Figure \ref{fig:R_1} are commutative if and only if %
\eqref{Rs-3} and \eqref{Rs-4} hold, so the proposition is proved.
\end{proof}

The inclusion maps make difficult to handle the equations (\ref{Rs-1})-(\ref%
{Rs-4}). In some cases we can remove these morphisms by imposing more
conditions on the map $|\cdots |$ or on the monoidal category $\boldsymbol{M}
$.

\begin{fact}[The assumption (\dag ).]
\label{fa:dag} Let $\boldsymbol{M}$ be a monoidal category which is $S$%
-distributive. We shall say that $\boldsymbol{M}$ satisfies the hypothesis ($%
\dag $) if for any coproduct $(\textstyle\bigoplus {}_{i\in S}X_{i},\{\sigma
_{i}\}_{i\in S})$ in $\boldsymbol{M}$ and any morphisms $f^{\prime
}:X\rightarrow X_{i^{\prime }}$ and $f^{\prime \prime }:X\rightarrow
X_{i^{\prime \prime }}$ such that $\sigma _{i^{\prime }}\circ f^{\prime
}=\sigma _{i^{\prime \prime }}\circ f^{\prime \prime }$, then either $X$ is
an initial object $\emptyset $ in $\boldsymbol{M},$ or $f^{\prime
}=f^{\prime \prime }$ and $i^{\prime }=i^{\prime \prime }$.

The prototype for the class of monoidal categories that satisfy the
condition (\dag ) is $\boldsymbol{Set}$. Indeed, let $\{X_{i}\}_{i\in S}$ be
a family of sets, and let $\sigma _{i}$ denote the inclusion of $X_{i}$ into
the disjoint union $\textstyle\coprod_{i\in S}X_{i}.$ We assume that $%
f^{\prime }:X\rightarrow X_{i^{\prime }}$ and $f^{\prime \prime
}:X\rightarrow X_{i^{\prime \prime }}$ are functions such that $X$ in not
the empty set, the initial object of $\boldsymbol{Set}$, and $\sigma
_{i^{\prime }}\circ f^{\prime }=\sigma _{i^{\prime \prime }}\circ f^{\prime
\prime }.$ Then in view of the computation
\begin{equation*}
(i^{\prime },f^{\prime }(x))=(\sigma _{i^{\prime }}\circ f^{\prime
})(x)=(\sigma _{i^{\prime \prime }}\circ f^{\prime \prime })(x)=(i^{\prime
\prime },f^{\prime \prime }(x))
\end{equation*}%
it follows that $f^{\prime }=f^{\prime \prime }$ and $i^{\prime }=i^{\prime
\prime }.$
\end{fact}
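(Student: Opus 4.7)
The plan is to verify the hypothesis ($\dag$) in $\boldsymbol{Set}$ by working with the standard concrete realization of the coproduct and then doing a pointwise argument on elements. First I would identify the disjoint union with the explicit set $\{(i,x) : i\in S,\; x\in X_i\}$, where the canonical inclusion $\sigma_i:X_i\to \textstyle\coprod_{i\in S}X_i$ sends $x$ to $(i,x)$. Any coproduct in $\boldsymbol{Set}$ is uniquely isomorphic to this one via the universal property, so checking ($\dag$) for this model checks it in general.

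Next I would take the data of the hypothesis: functions $f':X\to X_{i'}$ and $f'':X\to X_{i''}$ satisfying $\sigma_{i'}\circ f'=\sigma_{i''}\circ f''$, together with the assumption that $X$ is not an initial object of $\boldsymbol{Set}$, i.e. $X\neq\emptyset$. I would then pick an arbitrary element $x\in X$ (possible by nonemptiness) and evaluate both sides of the equation at $x$, obtaining the equality of ordered pairs $(i',f'(x))=(i'',f''(x))$ in $\textstyle\coprod_{i\in S}X_i$. Comparing first coordinates forces $i'=i''$, and then comparing second coordinates gives $f'(x)=f''(x)$. Since $x\in X$ was arbitrary, this yields $f'=f''$, which is the desired conclusion. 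The calculation is precisely the three-term chain of equalities already displayed in the excerpt.

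The only subtle point — and the reason the clause about the initial object appears at all in ($\dag$) — is the need to exclude $X=\emptyset$. The main (and only) obstacle is to explain why this exclusion is genuinely necessary, not a technical artifact: when $X$ is empty, there is a unique function from $X$ to any target, so $\sigma_{i'}\circ f'=\sigma_{i''}\circ f''$ holds automatically for every choice of $i'$ and $i''$, including distinct ones whenever $|S|\geq 2$. Thus no conclusion of the form $i'=i''$ could be drawn in that case, and the carve-out in the definition is tight. Beyond this remark, the verification is a direct element chase and requires no further ingredients.
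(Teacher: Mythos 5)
Your argument is exactly the one in the paper: realize the coproduct as the set of pairs $(i,x)$, use nonemptiness of $X$ to pick an element, evaluate $\sigma_{i'}\circ f'=\sigma_{i''}\circ f''$ at it, and compare coordinates to get $i'=i''$ and $f'=f''$. The additional remark on why the $X=\emptyset$ carve-out is necessary is correct and a nice supplement, but the core verification coincides with the paper's.
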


\begin{corollary}
\label{cor: R simplu1} Let $\boldsymbol{M}$ be an $S$-distributive monoidal
category. Let $\boldsymbol{A}$ and $\boldsymbol{B}$ be two $\boldsymbol{M}$%
-categories such that $A_{0}=B_{0}=S$. Given a function $\left\vert \cdots
\right\vert :S^{3}\rightarrow S$ and the maps $\{_{x}\widetilde{{R}}%
_{z}^{y}\}_{x,y,z\in S}$ as in \S \ref{fa:R^tilda}, let us consider the
following four conditions:

\begin{enumerate}
\item[(i)] If $_{x}B_{y}B_{z}A_{t}\ $is not an initial object, then $%
|xy|yzt||=|xzt|$ and
\begin{equation}
_{x}A_{|xzt|}b_{t}^{|yzt|}\circ {}_{x}\widetilde{R}_{|yzt|}^{y}B_{t}{}\circ
{}_{x}B_{y}\widetilde{R}_{t}^{z}={}_{x}\widetilde{R}_{t}^{z}\circ
{}_{x}b_{z}^{y}A_{t};  \label{cor1-1}
\end{equation}

\item[(ii)] If $_{x}B_{y}A_{z}A_{t}\ \ $is not an initial object, then $%
||xyz|zt|=|xyt|$ and
\begin{equation}
_{x}a_{|xyt|}^{|xyz|}B_{t}\circ {}_{x}A_{|xyz|}\widetilde{R}_{t}^{z}\circ
{}_{x}\widetilde{R}_{z}^{y}A_{t}={}_{x}\widetilde{R}_{t}^{y}\circ
{}_{x}B_{y}a_{t}^{z};  \label{cor1-2}
\end{equation}

\item[(iii)] If $_{x}A_{y}$ $\ $is not an initial object, then $|xxy|=y$ and%
\begin{equation}
{\ }_{x}\widetilde{R}_{y}^{x}\circ (1_{x}^{B}\otimes
{}_{x}A_{y})={}_{x}A_{y}\otimes 1_{y}^{B};  \label{cor1-3}
\end{equation}

\item[(iv)] If $_{x}B_{y}$ $\ $is not an initial object, then $|xyy|=x$ and
\begin{equation}
_{x}\widetilde{R}_{y}^{y}\circ (_{x}B_{y}\otimes
1_{y}^{A})={}1_{x}^{A}\otimes {}_{x}B_{y}.  \label{cor1-4}
\end{equation}
\end{enumerate}

The above conditions imply the relations (\ref{Rs-1})-(\ref{Rs-4}). Under
the additional assumption that $\boldsymbol{M}$ satisfies the hypothesis
(\dag ), the reversed implication holds as well.
\end{corollary}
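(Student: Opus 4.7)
Both directions reduce to reading off inclusion-free identities from identities that have been post-composed with canonical inclusions of coproduct summands. I first observe that each of the relations (\ref{Rs-1})--(\ref{Rs-4}) is an equation of the form $\sigma_{i'}\circ f' = \sigma_{i''}\circ f''$, where $\sigma_{i'},\sigma_{i''}$ are the canonical injections into the same coproduct (namely $_{x}A_{\overline{u}}B_{t}$ for (\ref{Rs-1}) and (\ref{Rs-2}), and $_{x}A_{\overline{u}}B_{y}$ for (\ref{Rs-3}) and (\ref{Rs-4})), while $f',f''$ have common source equal to the object named in the corresponding part of the corollary: $_{x}B_{y}B_{z}A_{t}$ for (i), $_{x}B_{y}A_{z}A_{t}$ for (ii), $_{x}A_{y}$ for (iii), and $_{x}B_{y}$ for (iv).

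For the forward direction, given (i), consider (\ref{Rs-1}). If $_{x}B_{y}B_{z}A_{t}$ is an initial object, then any two morphisms out of it coincide, so (\ref{Rs-1}) holds trivially. Otherwise, (i) supplies both the index identity $|xy|yzt||=|xzt|$ and the equation (\ref{cor1-1}); post-composing (\ref{cor1-1}) with $_{x}\sigma_{t}^{|xzt|}=\,_{x}\sigma_{t}^{|xy|yzt||}$ yields exactly (\ref{Rs-1}). The derivations of (\ref{Rs-2})--(\ref{Rs-4}) from (ii)--(iv) are entirely analogous, each time splitting according to whether the relevant source object is initial.

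For the converse, assume that $\boldsymbol{M}$ satisfies (\dag) and that (\ref{Rs-1})--(\ref{Rs-4}) hold. Take (\ref{Rs-1}) as the model case: it reads $_{x}\sigma_{t}^{|xy|yzt||}\circ f' = \,_{x}\sigma_{t}^{|xzt|}\circ f''$, with $f',f''$ defined on $_{x}B_{y}B_{z}A_{t}$. If this source is initial, condition (i) is vacuously true. Otherwise, applying (\dag) to this coequality forces both the index equality $|xy|yzt||=|xzt|$ and the clean equality $f'=f''$; the latter, once indices are identified, is precisely (\ref{cor1-1}). The same argument applied to (\ref{Rs-2})--(\ref{Rs-4}) produces (ii)--(iv).

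The main obstacle is not conceptual but notational: one must correctly identify, in each of (\ref{Rs-1})--(\ref{Rs-4}), which coproduct plays the role of $\bigoplus_i X_i$ in the statement of (\dag), and which summand's inclusion appears on each side, so that (\dag) can be invoked with the correct data. Once the bookkeeping is set up, no further categorical input beyond (\dag) and the universal property of the coproduct is required.
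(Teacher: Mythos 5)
Your proposal is correct and follows essentially the same route as the paper's own proof: split on whether the common source is initial (in which case uniqueness of maps out of an initial object settles the identity), and otherwise either post-compose with the common inclusion $_{x}\sigma_{t}^{|xzt|}={}_{x}\sigma_{t}^{|xy|yzt||}$ for the forward direction or invoke (\dag) with $f',f''$ the two sides and $i',i''$ the two indices for the converse. No meaningful differences from the paper's argument.
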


\begin{proof}
Let us prove that the condition (i) implies the relation (\ref{Rs-1}). In
the case when $_{x}B_{y}B_{z}A_{t}=\emptyset $ this is clear, as both sides
of (\ref{Rs-1}) are morphisms from an initial object to $_{x}A_{\overline{u}%
}B_{t}.$ Let us suppose that $_{x}B_{y}B_{z}A_{t}\neq \emptyset .$ By
composing both sides of (\ref{cor1-1}) with $_{x}\sigma
_{t}^{|xy|yzt||}={}_{x}\sigma _{t}^{|xzt|}$ we get the equation (\ref{Rs-1}%
). Similarly, the conditions (ii), (iii) and (iv) imply the relations (\ref%
{Rs-2}), (\ref{Rs-3}) and (\ref{Rs-4}), respectively.

Let us assume that $\boldsymbol{M}$ satisfies the hypothesis (\dag ). We
claim that (\ref{Rs-1}) implies (i). If $_{x}B{}_{y}B${}$_{z}A_{t}\ $is not
an initial object we take $f^{\prime }$ and $f^{\prime \prime }$ to be the
left hand side and the right hand side of (\ref{cor1-1}), respectively. We
also set $i^{\prime }:=|xy|yzt||$ and $i^{\prime \prime }:=|xzt|.$ In view
of (\dag ), it follows that $f^{\prime }=f^{\prime \prime }$ and $i^{\prime
}=i^{\prime \prime },$ so our claim has been proved. We conclude the proof
in the same way.
\end{proof}

\begin{fact}[$\mathbb{K}$-linear monoidal categories.]
\label{fa:ddag} Recall that $\boldsymbol{M}$ is $\mathbb{K}$-linear if its
hom-sets are $\mathbb{K}$-modules, and both the composition and the tensor
product of morphisms are $\mathbb{K}$-bilinear maps. For instance, $\mathbb{K%
}$-$\boldsymbol{Mod}$, $H$-$\boldsymbol{Mod}$, $\boldsymbol{Comod}$-$H$ and $%
\Lambda $-$\boldsymbol{Mod}$-$\Lambda $ are $S$-distributive linear monoidal
categories, for any set $S$.

Note that the (\dag ) condition fail in a $\mathbb{K}$-linear monoidal
category $\boldsymbol{M}$. Indeed let us pick up an object $X$, which is not
an initial object, and a coproduct $(\textstyle\bigoplus_{i\in
S}X_{i},\{\sigma _{i}\}_{i\in S})$ in $\boldsymbol{M}$. If $f^{\prime
}:X\rightarrow X_{i^{\prime }}$ and $f^{\prime \prime }:X\rightarrow
X_{i^{\prime \prime }}$ are the zero morphisms, then of course $\sigma
_{i^{\prime }}\circ f^{\prime }=\sigma _{i^{\prime \prime }}\circ f^{\prime
\prime },$ but neither $i^{\prime }=i^{\prime \prime }$ nor $f^{\prime
}=f^{\prime \prime }$, in general.

Nevertheless, the relations (\ref{Rs-1})-(\ref{Rs-4}) can also be simplified
if $\boldsymbol{M}$ is a linear monoidal category. For any coproduct $(%
\textstyle\bigoplus_{i\in S}X_{i},\{\sigma _{i}\}_{i\in S})$ in $\boldsymbol{%
M}$ and every $i\in S,$ there is a map $\pi _{i}:\textstyle\bigoplus_{i\in
S}X_{i}\rightarrow X_{i}$ such that $\pi _{i}\circ \sigma _{i}=X_{i}$ and $%
\pi _{i}\circ \sigma _{j}=0$, provided that $j\neq i$. Hence, supposing that
$f^{\prime }:X\rightarrow X_{i^{\prime }}$ and $f^{\prime \prime
}:X\rightarrow X_{i^{\prime \prime }}$ are morphisms such that $\sigma
_{i^{\prime }}\circ f^{\prime }=\sigma _{i^{\prime \prime }}\circ f^{\prime
\prime },$ we must have either $i^{\prime }=i^{\prime \prime }$ and $%
f^{\prime }=f^{\prime \prime },$ or $i^{\prime }\neq i^{\prime \prime }$ and
$f^{\prime }=0=f^{\prime \prime }.$
\end{fact}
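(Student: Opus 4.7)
The plan is to verify in turn the three substantive assertions packed into this fact. For the list of examples being $\mathbb{K}$-linear and $S$-distributive, I would note that the hom-sets of modules, comodules, and $\Lambda$-bimodules are $\mathbb{K}$-modules in the obvious way, that composition and tensor product of $\mathbb{K}$-linear maps are again $\mathbb{K}$-linear (this is built into the universal property that defines $\otimes_{\mathbb{K}}$ and $\otimes_{\Lambda}$ on morphisms), and that in each of these cases the coproduct is the usual direct sum and commutes with tensoring by a fixed object. Nothing beyond standard facts about module/comodule categories is needed here.

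For the failure of the hypothesis $(\dag)$ I would just formalise the counterexample hinted at in the statement: in any $\mathbb{K}$-linear category every hom-set $\mathrm{Hom}_{\boldsymbol{M}}(X,Y)$ is a $\mathbb{K}$-module and hence contains a distinguished zero element, and composition is $\mathbb{K}$-bilinear so precomposing or postcomposing with zero yields zero. Thus, picking an object $X$ which is not initial, any two indices $i',i'' \in S$ (possibly distinct) and $f' = 0 = f''$, one gets $\sigma_{i'} \circ f' = 0 = \sigma_{i''} \circ f''$ without $i' = i''$ being forced; so $(\dag)$ fails.

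The substantive point is the construction of the projections $\pi_i$, which is the step I would write out carefully. Fix $i \in S$ and consider the $S$-indexed family of morphisms $\{f_{ij}\}_{j \in S}$ defined by $f_{ii} := \mathrm{Id}_{X_i}$ and $f_{ij} := 0 \in \mathrm{Hom}_{\boldsymbol{M}}(X_j, X_i)$ for $j \neq i$; this family is well-defined precisely because $\boldsymbol{M}$ is $\mathbb{K}$-linear and so each hom-module has a zero. Applying the universal property of the coproduct $(\bigoplus_{j \in S} X_j, \{\sigma_j\}_{j \in S})$ to this family produces a unique morphism $\pi_i : \bigoplus_{j \in S} X_j \to X_i$ satisfying $\pi_i \circ \sigma_j = f_{ij}$ for every $j$, which immediately gives $\pi_i \circ \sigma_i = \mathrm{Id}_{X_i}$ and $\pi_i \circ \sigma_j = 0$ for $j \neq i$.

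Once the $\pi_i$ are available, the weakened implication is a one-line consequence. Assuming $\sigma_{i'} \circ f' = \sigma_{i''} \circ f''$, precomposing both sides with $\pi_{i'}$ on the left yields $f' = \pi_{i'} \circ \sigma_{i''} \circ f''$, which equals $f''$ if $i' = i''$ and equals $0$ if $i' \neq i''$; symmetrically, applying $\pi_{i''}$ gives $f'' = 0$ in the latter case. Hence either $i' = i''$ and $f' = f''$, or $i' \neq i''$ and $f' = f'' = 0$, as claimed. The entire argument is routine; the only point that merits care is verifying that the formal zero morphism of a $\mathbb{K}$-linear category interacts with the coproduct structure in the expected way, which is immediate from the bilinearity of composition.
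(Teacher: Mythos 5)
Your proposal is correct and follows essentially the same route as the paper: it exhibits the zero morphisms as the counterexample to (\dag), constructs the projections $\pi_i$ from the universal property of the coproduct applied to the family consisting of the identity of $X_i$ and zero morphisms elsewhere, and then composes with $\pi_{i'}$ and $\pi_{i''}$ to obtain the dichotomy. The only quibble is terminological: composing with $\pi_{i'}$ on the left is post-composition, not ``precomposition.''
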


Using the above property of linear monoidal categories, and proceeding as in
the proof of the previous corollary, we get the following result.

\begin{corollary}
\label{cor R simplu2} Let $\boldsymbol{M}$ be an $S$-distributive $\mathbb{K}
$-linear monoidal category. If $\boldsymbol{A}$ and $\boldsymbol{B}$ are $%
\boldsymbol{M}$-categories, then the relations (\ref{Rs-1})-(\ref{Rs-4}) are
equivalent to the following conditions:

\begin{enumerate}
\item[(i)] If $|xy|yzt||=|xzt|\ $then the relation (\ref{cor1-1}) holds;
otherwise, each side of this identity has to be the zero map;

\item[(ii)] If $||xyz|zt|=|xyt|,$ then the relation (\ref{cor1-2}) holds;
otherwise, each side of this identity has to be the zero map;

\item[(iii)] If $|xxy|=y,$ then the relation (\ref{cor1-3}) holds;
otherwise, each side of this identity has to be the zero map;

\item[(iv)] If $|xyy|=x,$ then the relation (\ref{cor1-4}) holds; otherwise,
each side of this identity has to be the zero map.
\end{enumerate}
\end{corollary}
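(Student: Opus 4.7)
The plan is to mimic the argument used in the proof of Corollary \ref{cor: R simplu1}, but to replace the appeal to the hypothesis (\dag) by the splitting property of coproducts in a $\mathbb{K}$-linear category that was recorded in \S\ref{fa:ddag}. Recall that in a $\mathbb{K}$-linear monoidal category every coproduct $(\bigoplus_{i\in S}X_i,\{\sigma_i\}_{i\in S})$ comes equipped with canonical projections $\pi_i$ satisfying $\pi_i\circ\sigma_i=\mathrm{Id}_{X_i}$ and $\pi_i\circ\sigma_j=0$ for $j\neq i$. This immediately yields the following substitute for (\dag): if $f':X\to X_{i'}$ and $f'':X\to X_{i''}$ satisfy $\sigma_{i'}\circ f'=\sigma_{i''}\circ f''$, then either $i'=i''$ and $f'=f''$, or $i'\neq i''$ and $f'=0=f''$.

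For the implication (i)--(iv) $\Rightarrow$ (\ref{Rs-1})--(\ref{Rs-4}), I would proceed exactly as in Corollary \ref{cor: R simplu1}: in each of the four cases, one distinguishes the subcase where the two indices in $S$ produced from $(x,y,z,t)$ coincide (apply $\sigma$ to the stated identity) from the subcase where they differ (both sides of the corresponding equation in (\ref{Rs-1})--(\ref{Rs-4}) are then the composition of a zero map with an inclusion, hence zero). This direction is a routine check and requires no use of the splitting property.

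For the reverse implication, fix $x,y,z,t\in S$ and consider equation (\ref{Rs-1}). Let $f':={}_{x}A_{|xzt|}b_{t}^{|yzt|}\circ{}_{x}\widetilde{R}_{|yzt|}^{y}B_{t}\circ{}_{x}B_{y}\widetilde{R}_{t}^{z}$ with target index $i':=|xy|yzt||$, and let $f'':={}_{x}\widetilde{R}_{t}^{z}\circ{}_{x}b_{z}^{y}A_{t}$ with target index $i'':=|xzt|$. Equation (\ref{Rs-1}) says precisely that $\sigma_{i'}\circ f'=\sigma_{i''}\circ f''$ inside ${}_{x}A_{\overline{u}}B_{t}$. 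By the splitting property above, either $i'=i''$ and $f'=f''$ (which is (\ref{cor1-1})), or $f'=0=f''$; this is exactly the dichotomy stated in (i). The same argument, applied with the obvious choices of $f',f'',i',i''$, shows that (\ref{Rs-2}), (\ref{Rs-3}), (\ref{Rs-4}) give (ii), (iii), (iv), respectively.

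There is no real obstacle here; the only care needed is bookkeeping, namely identifying in each of the four equations which piece plays the role of $f'$ and $f''$ and which index plays the role of $i'$ and $i''$. Once this is set up, the linear splitting property of coproducts does exactly the same work that (\dag) did in Corollary \ref{cor: R simplu1}, with the additional clause that the two sides may vanish when the indices disagree, which is precisely what distinguishes the linear case from the ``(\dag)-case''.
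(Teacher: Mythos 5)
Your proposal is correct and follows exactly the route the paper intends: the paper's own proof is the one-line remark preceding the corollary, namely to use the projections $\pi_i$ of \S\ref{fa:ddag} in place of the hypothesis (\dag) and then repeat the argument of Corollary \ref{cor: R simplu1}, which is precisely what you have written out. Your identification of $f'$, $f''$, $i'$, $i''$ in each of the four equations and the resulting dichotomy (either the indices agree and the maps coincide, or both maps vanish) is the intended argument.
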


\begin{fact}[Simple twisting systems.]
\label{STS} The proper context for constructing an enriched category $%
\boldsymbol{A}\otimes _{{R}}\boldsymbol{B}$ out of a special type of
twisting system ${R}$ is provided by Corollary \ref{cor: R simplu1}.

By definition, the couple $(\widetilde{{R}},|\cdots |)$ is a \emph{simple
twisting system between} $\boldsymbol{B}$ \emph{and} $\boldsymbol{A}$ if the
function $\left\vert \cdots \right\vert :S^{3}\rightarrow S$ and the maps $%
\{_{x}\widetilde{R}_{z}^{y}\}_{x,y,z\in S}$ as in \S \ref{fa:R^tilda}
satisfy the conditions (i)-(iv) in Corollary \ref{cor: R simplu1}. As a part
of the definition, we also assume that $_{x}A_{\left\vert xyz\right\vert
}B_{z}$ is not an initial object whenever $_{x}B_{y}A_{z}$ is not so.

The latter technical assumption will be used to prove the associativity of
the composition in ${\boldsymbol{A}\otimes _{{R}}\boldsymbol{B}},$ our
categorical version of the twisted tensor product of two algebras, which we
are going to define in the next subsection. Note that for $\boldsymbol{Set}$
this condition is superfluous (if the source of $_{x}\widetilde{R}_{z}^{y} $
is not empty, then its target cannot be the empty set).

For a simple twisting system $(\widetilde{{R}},|\cdots |)$ we define the
maps $_{x}R_{z}^{y}$ using the relation (\ref{R simplu}). By Corollary \ref%
{cor: R simplu1} and Proposition \ref{thm R simplu} it follows that ${R}%
:=\{_{x}R_{z}^{y}\}_{x,y,z\in S}$ is a twisting system.
\end{fact}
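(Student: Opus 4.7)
The plan is to establish this claim as a direct consequence of chaining the two preceding results. By the very definition of a simple twisting system given at the start of \S\ref{STS}, the data $(\widetilde{R},|\cdots|)$ satisfy conditions (i)--(iv) of Corollary \ref{cor: R simplu1}. Defining ${}_{x}R_{z}^{y}:={}_{x}\sigma _{z}^{|xyz|}\circ {}_{x}\widetilde{R}_{z}^{y}$ as in (\ref{R simplu}), the first (easy) direction of Corollary \ref{cor: R simplu1} yields that the family $\{{}_{x}R_{z}^{y}\}_{x,y,z\in S}$ satisfies relations (\ref{Rs-1})--(\ref{Rs-4}). Then Proposition \ref{thm R simplu} applies in reverse: it asserts that a family defined through (\ref{R simplu}) is a twisting system if and only if relations (\ref{Rs-1})--(\ref{Rs-4}) hold. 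Combining these two implications gives the claim.

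The only point that deserves a remark is how the technical clauses of the simple twisting system definition interact with the verification. The conditions (i)--(iv) of Corollary \ref{cor: R simplu1} have the form ``if the source object is not an initial object, then the identities (\ref{cor1-1})--(\ref{cor1-4}) hold''. In the initial-object case, both sides of (\ref{Rs-1})--(\ref{Rs-4}) are morphisms out of an initial object and so agree automatically. In the non-initial case, one composes (\ref{cor1-1})--(\ref{cor1-4}) with the appropriate inclusion ${}_{x}\sigma _{z}^{|xyz|}$ to recover (\ref{Rs-1})--(\ref{Rs-4}); this is exactly the argument given in the proof of Corollary \ref{cor: R simplu1}, which we invoke without modification. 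The extra assumption built into the definition in \S\ref{STS}, namely that ${}_{x}A_{|xyz|}B_{z}$ is not initial whenever ${}_{x}B_{y}A_{z}$ is not, is irrelevant for the present claim; it is a hypothesis reserved for later, when one uses $\widetilde{R}$ itself (rather than $R$) to build $\boldsymbol{A}\otimes _{R}\boldsymbol{B}$.

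Consequently there is no substantive obstacle: the statement is essentially a bookkeeping corollary. The main thing to be careful about is that the direction of Corollary \ref{cor: R simplu1} being used here is the unconditional implication (conditions $\Rightarrow$ relations), which does not require the hypothesis $(\dag)$; the converse implication, which does require $(\dag)$ or $\mathbb{K}$-linearity, is not needed. Thus one obtains a twisting system $R$ from every simple twisting system $(\widetilde{R},|\cdots|)$ in any $S$-distributive monoidal category $\boldsymbol{M}$.
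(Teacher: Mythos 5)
Your proposal is correct and follows exactly the paper's own (one-line) justification: the unconditional direction of Corollary \ref{cor: R simplu1} turns conditions (i)--(iv) into the relations (\ref{Rs-1})--(\ref{Rs-4}), and Proposition \ref{thm R simplu} then certifies that the family defined by (\ref{R simplu}) is a twisting system. Your additional observations --- that the hypothesis $(\dag)$ is not needed for this direction, and that the extra non-initiality assumption on $_{x}A_{|xyz|}B_{z}$ is reserved for the later construction of $\boldsymbol{A}\otimes_{R}\boldsymbol{B}$ --- are both accurate and consistent with the paper.
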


\begin{fact}[The category $\boldsymbol{A}\otimes _{{R}}\boldsymbol{B}$.]
\label{fa:ARB}For a simple twisting system $(\widetilde{{R}},|\cdots |)$ we
set
\begin{equation*}
\left( \boldsymbol{A}\otimes _{{R}}\boldsymbol{B}\right) _{0}:=S\quad \text{%
and\quad }{_{x}(\boldsymbol{A}\otimes _{{R}}\boldsymbol{B})}_{y}:=\textstyle%
\bigoplus\limits_{u\in S}{}_{x}A_{u}\otimes {}_{u}B_{y}={}_{x}A_{\overline{u}%
}B_{y}.
\end{equation*}%
Let us fix three elements $x,$ $y$ and $z$ in $S.$ By definition $_{x}A_{%
\overline{u}}B_{y}A_{\bar{v}}B_{z}:=\textstyle\bigoplus\limits_{u,v\in
S}{}\,_{x}A_{u}B_{y}A_{v}B_{z}$, and
\begin{equation*}
_{x}A_{\overline{u}}B_{y}A_{\overline{v}}B_{z}\cong {}_{x}A_{\overline{u}%
}B_{y}\otimes {}_{y}A_{\overline{v}}B_{z}
\end{equation*}%
as $\boldsymbol{M}$ is $S$-distributive. Via this identification, the
canonical inclusion of $_{x}A_{u}B_{y}A_{v}B_{z}$ into the coproduct $_{x}A_{%
\overline{u}}B_{y}A_{\overline{v}}B_{z}$ corresponds to $_{x}\sigma
_{y}^{u}\sigma _{z}^{v}={}_{x}\sigma _{y}^{u}\otimes {}_{y}\sigma _{z}^{v}.$
Thus, there is a unique morphism $_{x}c_{z}^{y}:{}_{x}A_{\overline{u}%
}B_{y}A_{\overline{v}}B_{z}\rightarrow {}_{x}A_{\overline{u}}B_{z}$ such
that
\begin{equation*}
_{x}c_{z}^{y}\circ {}_{x}\sigma _{y}^{u}\sigma _{z}^{v}={}_{x}\sigma
_{z}^{\left\vert uyv\right\vert }\circ {}_{x}a_{\left\vert uyv\right\vert
}b_{z}\circ {}_{x}A_{u}\widetilde{R}_{v}^{y}B_{z},
\end{equation*}%
for all $u,v\in S$. Finally, we set $1_{x}:={}_{x}\sigma _{x}^{x}\circ
(1_{x}^{A}\otimes 1_{x}^{B})$, and we define
\begin{equation*}
_{x}\alpha _{y}:={}_{x}\sigma _{y}^{y}\circ ({}_{x}A_{y}\otimes
1_{y}^{B})\qquad \text{and}\qquad _{x}\beta _{y}:={}_{x}\sigma _{y}^{x}\circ
(1_{x}^{A}\otimes {}_{x}B_{y}).
\end{equation*}
\end{fact}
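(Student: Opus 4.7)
The plan is to verify in turn the two enriched-category axioms for the data in \S \ref{fa:ARB}, then check that $\boldsymbol{\alpha}$ and $\boldsymbol{\beta}$ are $\boldsymbol{M}$-functors, and finally show that $\boldsymbol{A}\otimes_{R}\boldsymbol{B}$ factorizes through $\boldsymbol{A}$ and $\boldsymbol{B}$ via these inclusions. The basic strategy is to exploit $S$-distributivity to reduce every identity between morphisms out of an iterated coproduct ${}_{x}A_{\overline{u}}B_{y}\cdots$ to the corresponding identity on each summand ${}_{x}A_{u}B_{y}\cdots$, and then to invoke the defining identity $_{x}c_{z}^{y}\circ{}_{x}\sigma_{y}^{u}\sigma_{z}^{v}={}_{x}\sigma_{z}^{|uyv|}\circ{}_{x}a_{|uyv|}b_{z}\circ{}_{x}A_{u}\widetilde{R}_{v}^{y}B_{z}$.

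First I would verify the unit axioms. Unfolding $1_{x}={}_{x}\sigma_{x}^{x}\circ(1_{x}^{A}\otimes 1_{x}^{B})$ and the definition of $_{x}c_{y}^{x}$ on the summand ${}_{x}A_{x}B_{x}A_{u}B_{y}$, the equation $_{x}c_{y}^{x}\circ(1_{x}\otimes{}_{x}\sigma_{y}^{u})={}_{x}\sigma_{y}^{u}$ reduces, after invoking condition (iii) of Corollary \ref{cor: R simplu1} (so $|xxu|=u$ and $\widetilde{R}_{u}^{x}\circ(1_{x}^{B}\otimes{}_{x}A_{u})={}_{x}A_{u}\otimes 1_{u}^{B}$), to a composite that collapses by the unit axioms of $\boldsymbol{A}$ and $\boldsymbol{B}$. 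The other unit axiom is symmetric, using condition (iv).

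The main obstacle is verifying associativity. By $S$-distributivity it suffices to show that both sides of the associativity square agree after precomposition with every ${}_{x}\sigma_{y}^{u}\sigma_{z}^{v}\sigma_{t}^{w}$. Expanding the two triple composites on ${}_{x}A_{u}B_{y}A_{v}B_{z}A_{w}B_{t}$ produces expressions of the form $_{x}\sigma_{t}^{?}$ precomposed with an iterated composite of $a$'s, $b$'s and $\widetilde{R}$'s. Condition (i) of Corollary \ref{cor: R simplu1}, which is the compatibility of $\widetilde{R}$ with $b$, and condition (ii), which is the compatibility with $a$, rearrange each side to a common normal form, and the same two conditions supply the nested index identities that force the superscripts $?$ to coincide on the two sides. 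The technical clause built into the definition of a simple twisting system in \S \ref{STS}, namely that ${}_{x}A_{|xyz|}B_{z}$ is non-initial whenever ${}_{x}B_{y}A_{z}$ is, ensures at every intermediate stage that the hypotheses of parts (i) and (ii) apply and thus that the two sides can genuinely be identified summand by summand, rather than only vacuously through an initial object.

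For the subcategory structure I would check that $_{x}\alpha_{y}$ preserves identities and composition. The identity condition is immediate from the definition of $1_{x}$. The composition condition $_{x}\alpha_{z}\circ{}_{x}a_{z}^{y}={}_{x}c_{z}^{y}\circ({}_{x}\alpha_{y}\otimes{}_{y}\alpha_{z})$ unfolds on ${}_{x}A_{y}A_{z}$ and reduces to an identity involving $\widetilde{R}_{z}^{z}$ that is precisely condition (iv). The analogous statement for $\boldsymbol{\beta}$ uses condition (iii). Finally, the factorization map $_{x}\varphi_{y}^{u}={}_{x}c_{y}^{u}\circ{}_{x}\alpha_{u}\beta_{y}$ unfolds, by the same two conditions, to $_{x}\sigma_{y}^{u}$, so the induced map $_{x}\varphi_{y}\colon{}_{x}A_{\overline{u}}B_{y}\to{}_{x}A_{\overline{u}}B_{y}$ is the identity, hence an isomorphism, which gives the factorization claim and concludes the proof.
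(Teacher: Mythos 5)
Your outline reproduces the paper's own route (the content of \S \ref{fa:ARB} is validated in Theorem \ref{thm:TTP}, prepared by Lemma \ref{le:STS}): reduce each axiom to the summands ${}_{x}A_{u}B_{y}A_{v}B_{z}\cdots$ by $S$-distributivity, normalize the two triple composites with conditions (i) and (ii) of Corollary \ref{cor: R simplu1}, treat the units, the functors $\boldsymbol{\alpha}$, $\boldsymbol{\beta}$ and the factorization with conditions (iii) and (iv), and observe that ${}_{x}\varphi_{y}$ is the identity of ${}_{x}A_{\overline{u}}B_{y}$. The one substantive gap is in the associativity step: the non-initiality bookkeeping is not carried by the technical clause of \S \ref{STS} alone. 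To apply condition (ii) after $\widetilde{R}$ has been pushed past one factor, you must know that objects such as ${}_{u}B_{y}A_{v}A_{q}$, with $q=\left\vert vzw\right\vert$, are not initial; the clause only yields that ${}_{v}A_{q}B_{w}$ is not initial, and you still need that ${}_{v}A_{q}$ itself is not initial and that tensoring it with the non-initial ${}_{u}B_{y}A_{v}$ stays non-initial. Both deductions require the hypothesis that $\boldsymbol{M}$ is a \emph{domain}, which is precisely how the paper obtains $\left\vert uyq\right\vert=\left\vert pvq\right\vert=\left\vert pzw\right\vert$ in Lemma \ref{le:STS}(1); without these index identities the two sides of the associativity equation need not even land in the same summand ${}_{x}A_{?}B_{t}$, so the claim that ``the hypotheses of parts (i) and (ii) apply at every intermediate stage'' is unsupported as written.

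A second, smaller point: you have conditions (iii) and (iv) attached to the wrong functors. Since ${}_{x}\alpha_{y}\otimes{}_{y}\alpha_{z}$ lands in ${}_{x}A_{y}B_{y}A_{z}B_{z}$, the composition applies ${}_{y}\widetilde{R}_{z}^{y}$ to $1_{y}^{B}\otimes{}_{y}A_{z}$, which is condition (iii), i.e.\ the relation (\ref{cor1-3}); it is the functoriality of $\boldsymbol{\beta}$ that applies $\widetilde{R}$ to ${}_{x}B_{y}\otimes 1_{y}^{A}$ and hence uses condition (iv). This is only a labelling slip, since both conditions are available, but the two are not interchangeable and the indices should be corrected.
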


\begin{fact}[Domains.]
To show that the above data define an enriched monoidal category $%
\boldsymbol{A}\otimes _{{R}}\boldsymbol{B}$ we need an extra hypothesis on $%
\boldsymbol{M}.$ By definition, a monoidal category $\boldsymbol{M}$ is a
\emph{domain }in the case when the tensor product of two objects$\ $is an
initial object if and only if at least one of them is an initial object. By
convention, a monoidal category that has no initial objects is a domain as
well.

Obviously $\boldsymbol{Set}$ is a domain. If $\mathbb{K}$ is a field, then $%
\mathbb{K}$-$\boldsymbol{Mod}$ is a domain. Keeping the assumption on $%
\mathbb{K},$ the categories $H$-$\boldsymbol{Mod}$ and $\boldsymbol{Comod}$-$%
H$ are domains, as their tensor product is induced by that one of $\mathbb{K}
$-$\boldsymbol{Mod}.$ On the other hand, if $\mathbb{K}$ is not a field,
then $\mathbb{K}$-$\boldsymbol{Mod}$ and $\Lambda $-$\boldsymbol{Mod}$-$%
\Lambda $ are not necessarily domains. For instance, $\mathbb{Z}$-$%
\boldsymbol{Mod}\cong \mathbb{Z}$-$\boldsymbol{Mod}$-$\mathbb{Z}$ is not a
domain.
\end{fact}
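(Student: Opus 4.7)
The plan is to verify each assertion separately. First I would identify the initial objects: in $\boldsymbol{Set}$ the initial object is the empty set $\emptyset$, while in $\mathbb{K}$-$\boldsymbol{Mod}$, $H$-$\boldsymbol{Mod}$, $\boldsymbol{Comod}$-$H$, and $\Lambda$-$\boldsymbol{Mod}$-$\Lambda$ it is the zero module (trivially an $H$-module, an $H$-comodule, and a $\Lambda$-bimodule respectively). In every case one direction of the domain property is automatic from functoriality of $\otimes$: tensoring with an initial object yields an initial object (in $\boldsymbol{Set}$, $\emptyset \times Y = \emptyset$; in the module categories, $0 \otimes_{\mathbb{K}} Y = 0$). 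The substance of each positive claim is therefore the converse: a tensor product of two non-initial objects must be non-initial.

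For $\boldsymbol{Set}$ this is immediate: if $x \in X$ and $y \in Y$ then $(x,y) \in X \times Y$. For $\mathbb{K}$-$\boldsymbol{Mod}$ with $\mathbb{K}$ a field, I would invoke the fact that every vector space is free: if $M, N \neq 0$, pick bases $\{e_i\}_{i \in I}$ of $M$ and $\{f_j\}_{j \in J}$ of $N$ with $I, J$ non-empty; then $\{e_i \otimes f_j\}_{(i,j) \in I \times J}$ is a basis of $M \otimes_{\mathbb{K}} N$, which is therefore nonzero. For $H$-$\boldsymbol{Mod}$ and $\boldsymbol{Comod}$-$H$, the monoidal product is, as a $\mathbb{K}$-module, precisely the $\mathbb{K}$-linear tensor product equipped with the diagonal action (respectively codiagonal coaction) induced by the bialgebra structure of $H$; the initial object in each category has the zero module as its underlying $\mathbb{K}$-module. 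Hence the forgetful functor to $\mathbb{K}$-$\boldsymbol{Mod}$ preserves $\otimes$ and reflects zero objects, so these two cases reduce to the preceding one.

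Finally, for the counterexample I would recall the standard computation $\mathbb{Z}/2\mathbb{Z} \otimes_{\mathbb{Z}} \mathbb{Z}/3\mathbb{Z} = 0$ (e.g.\ $1 \otimes 1 = 3(1 \otimes 1) - 2(1 \otimes 1) = 1 \otimes 3 - 2 \otimes 1 = 0$), while neither factor is the zero module; the same computation, viewed in the category of $\mathbb{Z}$-bimodules via the canonical isomorphism $\mathbb{Z}$-$\boldsymbol{Mod} \cong \mathbb{Z}$-$\boldsymbol{Mod}$-$\mathbb{Z}$, disposes of $\Lambda$-$\boldsymbol{Mod}$-$\Lambda$ in the case $\Lambda = \mathbb{Z}$. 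The only genuinely non-formal ingredient in the entire argument is the freeness of vector spaces over a field, which is the sole step where the hypothesis that $\mathbb{K}$ is a field is used; everything else is definitional unpacking or transport along a forgetful functor, so the identification of \emph{this} step as the crux (and the locus of potential failure when $\mathbb{K}$ is only a commutative ring) will be the structural point worth emphasizing.
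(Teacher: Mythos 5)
Your verification is correct and follows exactly the route the paper intends: the paper offers no proof beyond its one-line hints (the tensor products of $H$-$\boldsymbol{Mod}$ and $\boldsymbol{Comod}$-$H$ being induced by that of $\mathbb{K}$-$\boldsymbol{Mod}$, and $\mathbb{Z}$-$\boldsymbol{Mod}$ as the counterexample), and the details you supply --- nonemptiness of a product of nonempty sets, freeness of vector spaces over a field, reduction of the Hopf-module cases along the forgetful functor, and $\mathbb{Z}/2\mathbb{Z}\otimes_{\mathbb{Z}}\mathbb{Z}/3\mathbb{Z}=0$ --- are precisely the standard arguments behind those hints. Your identification of freeness over a field as the only non-formal ingredient, and hence the point of failure for general $\mathbb{K}$, is also the right diagnosis.
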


\begin{lemma}
\label{le:STS}Let $\boldsymbol{M}$ be an $S$-distributive monoidal domain.
Let $(\widetilde{{R}},\left\vert \dots \right\vert )$ denote a simple
twisting system between $\boldsymbol{B}$ and $\boldsymbol{A}.$

\begin{enumerate}
\item If $_{x}A_{u}B_{y}A_{v}B_{z}A_{w}B_{t}\neq \emptyset $ then $%
\left\vert uyq\right\vert =\left\vert pvq\right\vert =\left\vert
pzw\right\vert ,$ where $p=\left\vert uyv\right\vert $ and $q=\left\vert
vzw\right\vert .$

\item In the following diagram all squares are well defined
and commutative.
\begin{equation*}
\xymatrix{ \ar@{}[dr]^>(.60)*+[o][F-]{F} \sv{_xA_uB_yA_v B_
zA_wB_t}\ar[r]^-{\sa{III\widetilde{R}I}}
\ar[d]|<(.25){\sa{I\widetilde{R}III}} &\sv{_xA_uB_{{y}}A_v A_{q} B_w
B_t}\ar[r]^-{\sa{IIII b}}
\ar@{}[dr]|*+[o][F-]{F}\ar[d]|<(.25){\sa{II\widetilde{R}II\circ
I\widetilde{R}III}} &\sv{_xA_uB_yA_v A_q B_t}
\ar@{}[dr]|*+[o][F-]{R}\ar[r]^-{\sa{IIaI}}
\ar[d]|-{\sa{II\widetilde{R}I\circ I\widetilde{R}II}} &\sv{_xA_uB_y A_q B_t}
\ar[d]|-{\sa{I\widetilde{R}I}} \\ \sv{_xA_uA_pB_vB_zA_wB_t}
\ar@/^10px/[r]^-{\sa{II\widetilde{R}II\circ III\widetilde{R}I}}
\ar[d]|-{\sa{IIaII}}\ar@{}[dr]|*+[o][F-]{F} &\sv{_xA_u A_p A_{|pvq|} B_q B_w
B_t}\ar@{}[dr]|*+[o][F-]{F} \ar[r]_-{\sa{IIIIb}} \ar[d]|-{aIIII} &\sv{_xA_u
A_p A_{|pvq|} B_q B_t}\ar[r]_-{\sa{IaII}}
\sv{\ar[d]|-{aIII}}\ar@{}[dr]|*+[o][F-]{A} &\sv{_x A_u A_{|pvq|} B_q
B_t}\ar[d]|-{\sa{aII}} \\ \sv{_xA_pB_vB_zA_wB_t}
\ar@{}[dr]_>(.60)*+[o][F-]{L} \ar@/^-10px/[r]_-{\sa{I\widetilde{R}II\circ
II\widetilde{R}I}} \ar[d]|-{\sa{IbII}} & \sv{_x A_p A_{|pvq|} B_q B_w B_t}
\ar[r]_-{\sa{IIIb}}\sv{\ar[d]|-{IIbI}}\ar@{}[dr]|*+[o][F-]{A} &\sv{_x A_p
A_{|pvq|} B_q B_t}\sv{\ar[r]_-{\sa{aII}}}\sv{\ar[d]|-{IIb}}
\ar@{}[dr]|*+[o][F-]{F}&\sv{_x A_{|pvq|} B_q B_t} \ar[d]|-{\sa{Ib}} \\
\sv{_xA_p B_z A_w B_t} \ar[r]_-{\sa{I\widetilde{R}I}} &\sv{_xA_p A_{|pvq|}
B_w B_t} \sv{\ar[r]_{IIb}} &\sv{_xA_p A_{|pvq|} B_t}\sv{\ar[r]_{aI}}
&\sv{_xA_{|pvq|} B_t} }
\end{equation*}%
\end{enumerate}
\end{lemma}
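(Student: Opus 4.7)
The plan for part (1) is to exploit the hypothesis that $\boldsymbol{M}$ is an $S$-distributive monoidal domain. Since $_{x}A_{u}B_{y}A_{v}B_{z}A_{w}B_{t}\neq \emptyset$, iterated application of the domain property forces each of the factors $_{x}A_{u}, {}_{u}B_{y}, {}_{y}A_{v}, {}_{v}B_{z}, {}_{z}A_{w}, {}_{w}B_{t}$ to be non-initial. The technical assumption built into the definition of a simple twisting system in \S\ref{STS} then forces $_{u}A_{p}B_{v}$ and $_{v}A_{q}B_{w}$ to be non-initial, so in particular $_{u}A_{p}, {}_{p}B_{v}, {}_{v}A_{q}, {}_{q}B_{w}$ are non-initial. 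As a consequence, both $_{p}B_{v}B_{z}A_{w}$ and $_{u}B_{y}A_{v}A_{q}$ are non-initial tensor products, so I can apply conditions (i) and (ii) of Corollary \ref{cor: R simplu1}: taking $(x,y,z,t) = (p,v,z,w)$ in (i) gives $|pv|vzw|| = |pzw|$, i.e. $|pvq|=|pzw|$; and taking $(x,y,z,t) = (u,y,v,q)$ in (ii) gives $||uyv|vq| = |uyq|$, i.e. $|pvq|=|uyq|$. Together, these yield the required chain of equalities.

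For part (2), I would examine each interior square separately. The squares fall naturally into three families depending on which arrows they are built from.

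The squares marked F commute by the bifunctoriality of the tensor product: in each of them, the horizontal and vertical arrows act on disjoint tensor factors, so equation \eqref{ec:TP=functor} shows that one can perform them in either order. The squares marked A commute by associativity of composition in the $\boldsymbol{M}$-categories $\boldsymbol{A}$ and $\boldsymbol{B}$; each such square consists of two consecutive applications of the composition map $a$ (respectively $b$), with identity morphisms tensored on either side, and the axioms of an enriched category ensure commutativity. The squares marked R are the genuinely interesting ones: after discarding the identity morphisms on the outer factors, each is exactly the diagram encoded by equation \eqref{cor1-1} or \eqref{cor1-2} of Corollary \ref{cor: R simplu1}. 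They commute because $(\widetilde{R},|\cdots|)$ is a simple twisting system. Finally, the large square marked L is obtained as a paste of the smaller squares already treated.

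The main obstacle I anticipate is bookkeeping rather than any genuine mathematical difficulty. To write down the R-squares one must first know that the vertex $_{x}A_{u}A_{p}A_{|pvq|}B_{q}B_{w}B_{t}$ and its relatives are indexed consistently; this is exactly where part (1) enters, since the agreement $|uyq| = |pvq| = |pzw|$ is what allows the various local applications of $\widetilde{R}$ to compose coherently into a single diagram whose corners match. In particular, at each R-square I must verify that the non-initial hypothesis needed to invoke Corollary \ref{cor: R simplu1} is met, which is again guaranteed by the fact that all the relevant subtensor factors are non-initial as established in part (1). Once these indexing issues are settled, the commutativity of each elementary square is straightforward and the global commutativity follows by pasting.
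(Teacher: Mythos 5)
Your part (1) is exactly the paper's argument: the domain hypothesis makes every tensor subfactor of $_{x}A_{u}B_{y}A_{v}B_{z}A_{w}B_{t}$ non-initial, the technical clause in the definition of a simple twisting system (\S\ref{STS}) then gives non-initiality of $_{u}A_{p}B_{v}$ and $_{v}A_{q}B_{w}$, and conditions (i) and (ii) of Corollary \ref{cor: R simplu1}, applied at $(p,v,z,w)$ and $(u,y,v,q)$ respectively, yield the two equalities. Your treatment of the squares (F), (A) and (R) in part (2) likewise matches the paper: functoriality of $\otimes$, associativity, and equation \eqref{cor1-2} at $(u,y,v,q)$ tensored with $_{x}A_{u}$ on the left and $_{q}B_{t}$ on the right, the well-definedness of (R) resting precisely on the identity $|uyq|=|pvq|$ from part (1).

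The one misstep is your final claim that the square (L) ``is obtained as a paste of the smaller squares already treated.'' It is not: (L) is itself one of the nine elementary squares of the $3\times 3$ grid (the bottom-left one), and the commutativity of an elementary square cannot be deduced by pasting the others. After stripping the outer identities $_{x}A_{p}$ and $_{w}B_{t}$, the square (L) encodes equation \eqref{cor1-1} at $(x,y,z,t)=(p,v,z,w)$; it is well defined because $|pvq|=|pzw|$ by part (1), and it commutes because $_{p}B_{v}B_{z}A_{w}$ is non-initial, so condition (i) of Corollary \ref{cor: R simplu1} applies. This is the exact mirror of your argument for (R) --- indeed the paper simply says that (L) is handled ``analogously'' --- so the gap is immediately repairable, but as written your proposal does not actually prove that (L) commutes.
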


\begin{proof}
Since $\boldsymbol{M}$ is a domain it follows that any subfactor of $%
_{x}A_{u}B_{y}A_{v}B_{z}A_{w}B_{t}$ is not an initial object. In particular $%
_{v}B_{z}A_{w}\neq \emptyset .$ Thus, by the definition of simple twisting
systems, $_{v}A_{q}B_{w}$ is not an initial object. In conclusion, $%
_{v}A_{q} $ and $_{q}B_{w}$ are not initial objects in $\boldsymbol{M}.$
Since $_{u}B_{y}A_{v}\neq \emptyset $ it follows that $_{u}B_{y}A_{v}A_{q}%
\neq \emptyset $. In view of the definition of simple twisting systems (the
second condition) we deduce that $\left\vert pvq\right\vert =\left\vert
uyq\right\vert $. The other relation can be proved in a similar way.

Let $f$ and $g$ denote the following two morphisms
\begin{equation*}
f:={}_{x}A_{u}a_{\left\vert pvq\right\vert }^{p}B_{q}B_{t}\circ
{}_{x}A_{u}A_{p}\widetilde{R}_{q}^{v}B_{t}\circ {}_{x}A_{u}\widetilde{R}%
_{v}^{y}A_{q}B_{t}\text{\quad and\quad }g:={}_{x}A_{u}\widetilde{R}%
_{q}^{y}B_{t}\circ {}_{x}A_{u}B_{y}a_{q}^{v}B_{t}.
\end{equation*}%
The target of $f$ is $_{x}A_{u}A_{\left\vert pvq\right\vert }B_{q}B_{t},$
while the codomain of $g$ is $_{x}A_{u}A_{\left\vert uyq\right\vert
}B_{q}B_{t}$. These two objects may be different for some elements $x,u,t,p$
and $q$ in $S$. Thus, in general, it does not make sense to speak about the
square (R). On the other hand, we have seen that $\left\vert uyq\right\vert
=\left\vert pvq\right\vert ,$ if $p=\left\vert uyv\right\vert $ and $%
q=\left\vert vzw\right\vert .$ Hence (R) is well defined for these values of
$p$ and $q$. Furthermore, since $_{u}B_{y}A_{v}A_{q}\neq \emptyset ,$ by
definition of simple twisting systems we have
\begin{equation}
_{u}a_{|pvq|}^{p}B_{q}\circ {}_{u}A_{p}\widetilde{R}_{q}^{v}\circ {}_{u}%
\widetilde{R}_{v}^{y}A_{q}={}_{u}\widetilde{R}_{q}^{y}\circ
{}_{u}B_{y}a_{q}^{v}.  \label{lem}
\end{equation}%
By tensoring both sides of the above relation with $_{x}A_{u}$ on the left
and with $_{q}B_{t}$ on the right we get that $f=g,$ i.e. (R) is
commutative. Analogously, one shows that (L) is well defined and
commutative. All other squares are well defined by construction, their
arrows targeting to the right objects. The squares (F) are commutative since
the tensor product is a functor. The remaining squares (A) are commutative
by associativity.
\end{proof}

\begin{theorem}
\label{thm:TTP} Let $\boldsymbol{M}$ be an $S$-distributive monoidal domain.
If $(\widetilde{{R}},\left\vert \dots \right\vert )$ is a simple twisting
system, then the data in \S \ref{fa:ARB} define an $\boldsymbol{M}$-category
$\boldsymbol{A}\otimes _{{R}}\boldsymbol{B}$ that factorizes through $%
\boldsymbol{A}$ and $\boldsymbol{B}$.
\end{theorem}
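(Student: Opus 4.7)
The plan is to verify, in three stages, that (i) $\boldsymbol{A}\otimes_{R}\boldsymbol{B}$ satisfies the axioms of an $\boldsymbol{M}$-category, (ii) the maps $\boldsymbol{\alpha}$ and $\boldsymbol{\beta}$ introduced in \S\ref{fa:ARB} are inclusion $\boldsymbol{M}$-functors, and (iii) the canonical morphism $_{x}\varphi_{y}$ induced by $\boldsymbol{\alpha}$ and $\boldsymbol{\beta}$ is an isomorphism. Throughout, I would exploit the universal property of coproducts: to check identities between morphisms with source (or target) $_{x}A_{\overline{u}}B_{y}$, it suffices to verify them after precomposing with every inclusion $_{x}\sigma_{y}^{u}$. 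This reduces every diagram on a coproduct to a family of diagrams at the level of a single tensor product $_{x}A_{u}B_{y}$.

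\textbf{Unit axioms.} The identities $_{x}c_{y}^{x}\circ(1_{x}\otimes\mathrm{Id})=\mathrm{Id}$ and $_{x}c_{y}^{y}\circ(\mathrm{Id}\otimes 1_{y})=\mathrm{Id}$ on $_{x}A_{\overline{u}}B_{y}$ are checked summand by summand. Precomposing with $_{x}\sigma_{y}^{u}$ and unfolding the definition of $_{x}c_{y}^{\bullet}$ from \S\ref{fa:ARB}, the required equalities reduce to the unit axioms of $\boldsymbol{A}$ and $\boldsymbol{B}$ together with the formulas \eqref{cor1-3} and \eqref{cor1-4}, which describe the action of $\widetilde{R}$ on an element of the form unit-times-identity. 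A short computation using $|xxu|=u$ and $|xuu|=x$ disposes of the indexing of $\sigma$.

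\textbf{Associativity.} This is the main step, and its validity is essentially the content of Lemma \ref{le:STS}. By the universal property of coproducts, associativity of $_{x}c_{t}^{y}$ amounts to the equality of the two composites from $_{x}A_{u}B_{y}A_{v}B_{z}A_{w}B_{t}$ to $_{x}A_{\overline{u'}}B_{t}$ obtained by applying $_{x}c_{y}^{\bullet}$ on one side first and $_{y}c_{t}^{\bullet}$ first on the other. Expanding the definition of the composition twice on each side produces precisely the two boundary paths around the large diagram of Lemma \ref{le:STS}(2), whose commutativity (which uses both the index identities $|uyq|=|pvq|=|pzw|$ from part~(1) and the hypothesis that $\boldsymbol{M}$ is a domain, together with the associativity squares for $\boldsymbol{A}$ and $\boldsymbol{B}$ and the functoriality of $\otimes$) gives the desired equality after postcomposition with the inclusion $_{x}\sigma_{t}^{|pvq|}$. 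This is the step where the most care is required, as one must match the indices produced by the iterated applications of $\widetilde{R}$.

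\textbf{Functoriality and factorization.} Preservation of identities for $\boldsymbol{\alpha}$ and $\boldsymbol{\beta}$ is immediate from the formulas for $1_{x}$, $_{x}\alpha_{x}$ and $_{x}\beta_{x}$. Preservation of composition is obtained by evaluating $_{x}c_{z}^{y}$ on an image of $_{x}\alpha_{y}\otimes{}_{y}\alpha_{z}$ (respectively of $_{x}\beta_{y}\otimes{}_{y}\beta_{z}$) and invoking \eqref{cor1-3} and \eqref{cor1-4} to simplify $\widetilde{R}$ on the unit factors, reducing to the compositions in $\boldsymbol{A}$ and $\boldsymbol{B}$. Finally, combining both unit conditions yields $_{x}\varphi_{y}^{u}={}_{x}c_{y}^{u}\circ({}_{x}\alpha_{u}\otimes{}_{u}\beta_{y})={}_{x}\sigma_{y}^{u}$ for every $u\in S$, so $_{x}\varphi_{y}$ is the canonical map into the coproduct induced by its own inclusions, i.e.\ the identity of $_{x}A_{\overline{u}}B_{y}$. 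Hence $\boldsymbol{A}\otimes_{R}\boldsymbol{B}$ factorizes through $\boldsymbol{A}$ and $\boldsymbol{B}$. The main obstacle throughout is the bookkeeping of the inclusions $\sigma$ and the triple indices $|xyz|$, but Lemma \ref{le:STS} has already isolated exactly the compatibilities needed to overcome it.
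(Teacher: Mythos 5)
Your proposal is correct and follows essentially the same route as the paper: units and the factorization identity $_{x}\varphi_{y}^{u}={}_{x}\sigma_{y}^{u}$ reduce, summand by summand, to conditions (iii)--(iv) of Corollary \ref{cor: R simplu1}, while associativity is exactly the commutativity of the outer square of the diagram in Lemma \ref{le:STS}(2). The only detail left tacit is the degenerate case where a summand such as $_{x}A_{u}B_{y}A_{v}B_{z}A_{w}B_{t}$ is an initial object (the paper treats it separately, though it is trivial since parallel morphisms out of an initial object coincide).
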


\begin{proof}
Let us assume that $_{x}A_{u}B_{y}A_{v}B_{z}A_{w}B_{t}\neq \emptyset $. In
view of the previous lemma, the outer square in the diagram from Lemma \ref{le:STS} (2) is commutative. It follows that
\begin{equation*}
_{x}c_{t}^{y}\circ {}_{x}A_{\overline{u}}B_{y}c_{t}^{z}\circ {}_{x}\sigma
_{y}^{u}\sigma _{z}^{v}\sigma _{t}^{w}={}_{x}c_{t}^{z}\circ
{}_{x}c_{z}^{y}A_{\overline{w}}B_{t}\circ {}_{x}\sigma _{y}^{u}\sigma
_{z}^{v}\sigma _{t}^{w}\,{}.
\end{equation*}%
If $_{x}A_{u}B_{y}A_{v}B_{z}A_{w}B_{t}=\emptyset $ this identity obviously
holds. Since $_{x}A_{\overline{u}}B_{y}A_{\overline{v}}B_{z}A_{\overline{w}%
}B_{t}$ is the coproduct of $\{_{x}A_{u}B_{y}A_{v}B_{z}A_{w}B_{t}\}_{u,v,w%
\in S}$, with the canonical inclusions $\{_{x}\sigma _{y}^{u}\sigma
_{z}^{v}\sigma _{t}^{w}\}_{u,v,w\in S},$ we deduce that the composition in $%
\boldsymbol{A}\otimes _{{R}}\boldsymbol{B}$ is associative.

We apply the same strategy to show that $1_{x}:={}_{x}\sigma _{x}^{x}\circ
(1_{x}^{A}\otimes 1_{x}^{B})$ is a left identity map of $x,$ that is we have
$_{x}c_{y}^{x}\circ (1_{x}\otimes {}_{x}A_{\overline{u}}B_{y})={}_{x}A_{%
\overline{u}}B_{y}$ for any $y.$ By the universal property of coproducts and
the definition of the composition in $\boldsymbol{A}\otimes _{{R}}%
\boldsymbol{B},$ it is enough to prove that
\begin{equation}
_{x}\sigma _{y}^{\left\vert xxu\right\vert }\circ {}_{x}a_{\left\vert
xxu\right\vert }^{x}b_{y}^{u}\circ {}_{x}A_{x}\widetilde{R}%
_{u}^{x}B_{y}\circ (1_{x}^{A}\otimes 1_{x}^{B}\otimes
{}_{x}A_{u}B_{y})={}_{x}\sigma _{y}^{u},  \label{ec:unit}
\end{equation}%
for all $u\in S.$ If $_{x}A_{u}$ is an initial object we have nothing to
prove, as the domains of the sides of the above equation are also initial
objects (recall that $_{x}A_{u}B_{y}=\emptyset $ if $_{x}A_{u}=\emptyset $).
Let us suppose that $_{x}A_{u}$ is not an initial object. Then by the
definition of simple twisting systems (the third condition) we get $%
\left\vert xxu\right\vert =u$ and%
\begin{equation*}
_{x}\sigma _{y}^{\left\vert xxu\right\vert }\circ {}_{x}a_{\left\vert
xxu\right\vert }^{x}b_{y}^{u}\circ {}_{x}A_{x}\widetilde{R}%
_{u}^{x}B_{y}\circ (1_{x}^{A}\otimes 1_{x}^{B}\otimes
{}_{x}A_{u}B_{y})={}_{x}\sigma _{y}^{u}\circ {}_{x}a_{u}^{x}b_{y}^{u}\circ
(1_{x}^{A}\otimes {}_{x}A_{u}\otimes 1_{u}^{B}\otimes {}_{u}B_{y}).
\end{equation*}%
Thus the equation (\ref{ec:unit}) immediately follows by the fact $1_{x}^{A}$
and $1_{u}^{B}$ are the identity morphisms of $x$ and $u$. The fact that $%
1_{x}$ is a right identity map of $x$ can be proved analogously.

We now claim that $\{_{x}\alpha _{y}\}_{x,y\in S}$ is an $\boldsymbol{M}$%
-functor. Taking into account the definition of $\boldsymbol{\alpha }$ and $%
_{x}c_{z}^{y}$ we must prove that
\begin{equation}
_{x}\sigma _{z}^{\left\vert yyz\right\vert }\circ {}_{x}a_{\left\vert
yyz\right\vert }^{x}a_{z}^{z}\circ {}_{x}A_{y}\widetilde{R}%
_{z}^{y}A_{z}\circ (_{x}A_{y}\otimes 1_{y}^{A}\otimes {}_{y}A_{z}\otimes
1_{z}^{A}{})={}_{x}\sigma _{z}^{z}\circ ({}_{x}a_{z}^{y}\otimes 1_{z}^{A}),
\label{ec:alpha}
\end{equation}%
for all $x,$ $y$ and $z$ in $S.$ Once again, if $_{y}A_{z}=\emptyset $ we
have nothing to prove. In the other case, one can proceed as in the proof of
(\ref{ec:unit}) to get this equation. Similarly, $\boldsymbol{\beta }$ is an
$\boldsymbol{M}$-functor.

It remains to prove the fact that $\boldsymbol{A}\otimes _{{R}}\boldsymbol{B}
$ factorizes through $\boldsymbol{A}$ and $\boldsymbol{B}.$ As a matter of
fact, for this enriched category, we shall show that $_{x}\varphi _{y}$ is
the identity map of $_{x}(\boldsymbol{A}\otimes _{{R}}\boldsymbol{B})_{y},$
for all $x$ and $y$ in $S.$ Recall that $_{x}\varphi _{y}$ is the unique map
such that $_{x}\varphi _{y}\circ {}_{x}\sigma _{y}^{u}={}_{x}c_{y}^{u}\circ
{}_{x}\alpha _{u}\beta _{y},$ for all $u\in S.$ Hence to conclude the proof
of the theorem it is enough to obtain the following relation%
\begin{equation}
{}_{x}\sigma _{y}^{\left\vert uuu\right\vert }\circ {}_{x}a_{\left\vert
uuu\right\vert }^{u}b_{y}^{u}\circ {}_{x}A_{u}\widetilde{R}%
_{u}^{u}B_{y}\circ (_{x}A_{u}\otimes 1_{u}^{B}\otimes 1_{u}^{A}\otimes
{}{}_{u}B_{z}{})={}_{x}\sigma _{y}^{u},  \label{ec:fact}
\end{equation}%
for all $u\in S.$ We may suppose that $_{x}A_{u}$ is not initial object.
Thus $\left\vert uuu\right\vert =u$ and we can take $x=u$ and $y=u$ in (\ref%
{cor1-4}). Hence, using the same reasoning as in the proof of (\ref{ec:unit}%
), we deduce the required identity.
\end{proof}

\begin{corollary}
\label{cor: TTP} Let $\boldsymbol{A}$ and $\boldsymbol{B}$ be enriched
categories over an $S$-distributive monoidal category $\boldsymbol{M}$. Let
us suppose that for all $x,$ $y,$ $z$ and $t$ in $S$ the function $%
\left\vert \cdots \right\vert :S^{3}\rightarrow S$ satisfies the equations%
\begin{equation}
|xy|yzt||=|xzt|,\quad \quad ||xyz|zt|=|xyt|,\quad \quad |xxy|=y\quad \quad
\text{and}\quad \quad |xyy|=x.  \label{modul}
\end{equation}%
If $\{_{x}\widetilde{{R}}_{z}^{y}\}_{x,y,z\in S}$ is a family of maps which
satisfies the identities (\ref{cor1-1})-(\ref{cor1-4}) for all $x,$ $y,$ $z$
and $t$ in $S,$ then the data in \S \ref{fa:ARB} define an $\boldsymbol{M}$%
-category $\boldsymbol{A}\otimes _{{R}}\boldsymbol{B}$ that factorizes
through $\boldsymbol{A}$ and $\boldsymbol{B}$.
\end{corollary}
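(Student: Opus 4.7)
My plan is to reduce this corollary to the proof of Theorem \ref{thm:TTP} by observing that the domain hypothesis on $\boldsymbol{M}$ used there is needed only to promote the conditional identities in the definition of a simple twisting system to unconditional ones; here the corollary postulates those unconditional identities directly, so the domain assumption becomes superfluous and the proof of Theorem \ref{thm:TTP} can be repeated essentially verbatim.

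Concretely, I would first establish the analogue of Lemma \ref{le:STS}. Part (1) becomes trivial: the equalities $|uyq|=|pvq|=|pzw|$ for $p=|uyv|$ and $q=|vzw|$ follow immediately from the first two identities in (\ref{modul}) applied twice, with no appeal to non-initiality. For part (2), the relevant squares (R) and (L) are well defined because their two paths now land in the same object thanks to (\ref{modul}); their commutativity is obtained by tensoring the unconditional identities (\ref{cor1-1}) and (\ref{cor1-2}) on the left by $_xA_u$ and on the right by $_qB_t$ (resp.\ $B_t$). The squares (F) are commutative by \eqref{ec:TP=functor}, and the squares (A) by the associativity of the composition in $\boldsymbol{A}$ and $\boldsymbol{B}$.

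With the lemma in hand, the proof of Theorem \ref{thm:TTP} transports step by step. Associativity of $_xc_z^y$ in $\boldsymbol{A}\otimes_{R}\boldsymbol{B}$ follows by precomposing the outer rectangle of the diagram in Lemma \ref{le:STS}(2) with the canonical inclusion $_x\sigma_y^u\sigma_z^v\sigma_t^w$ and invoking the universal property of the coproduct $_xA_{\overline{u}}B_yA_{\overline{v}}B_zA_{\overline{w}}B_t$. The identity axiom reduces to equation (\ref{ec:unit}), which follows directly from $|xxu|=u$ in (\ref{modul}) combined with (\ref{cor1-3}); no split on whether $_xA_u$ is initial is needed. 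Symmetrically, right-unitality and the fact that $\boldsymbol{\alpha}$, $\boldsymbol{\beta}$ are $\boldsymbol{M}$-functors (equation (\ref{ec:alpha}) and its counterpart) are obtained from (\ref{cor1-3}), (\ref{cor1-4}) together with the remaining identities in (\ref{modul}).

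Finally, for factorization I would reproduce the argument showing that $_{x}\varphi_{y}$ coincides with the identity of $_x(\boldsymbol{A}\otimes_{R}\boldsymbol{B})_y$: this amounts to identity (\ref{ec:fact}), which is immediate from $|uuu|=u$ and the unconditional form of (\ref{cor1-4}). The main obstacle is really just bookkeeping: at every appearance of the domain hypothesis in the original proofs one must check that the corresponding case split on initial objects is now resolved uniformly by the unconditional assumptions, so that the construction goes through in the same shape. No new conceptual ingredient seems to be required.
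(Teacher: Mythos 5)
Your proposal is correct and follows essentially the same route as the paper's own proof: the paper likewise derives Lemma \ref{le:STS}(1) directly from the first two identities in (\ref{modul}), observes that the squares (R) and (L) are then well defined and commutative because the relation (\ref{lem}) now holds unconditionally, and concludes by noting that (\ref{modul}) together with (\ref{cor1-1})--(\ref{cor1-4}) yield (\ref{ec:unit}), (\ref{ec:alpha}) and (\ref{ec:fact}) without the case split on initial objects, so the argument of Theorem \ref{thm:TTP} goes through verbatim.
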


\begin{proof}
Let $x,$ $y,$ $u,$ $v,$ $z,$ $w$ and $t$ be arbitrary elements in $S.$ By
using the first two identities in (\ref{modul}) we get $\left\vert
uyq\right\vert =\left\vert pvq\right\vert =\left\vert pzw\right\vert ,$
where $p=\left\vert uyv\right\vert $ and $q=\left\vert vzw\right\vert .$
Hence the first statement in Lemma \ref{le:STS} is true. In particular, the
squares (R) and (L) in the diagram from Lemma \ref{le:STS}(2) are well
defined. On the other hand, under the assumptions of the corollary, the
relation (\ref{lem}) hold. Therefore we can continue as in the proof of the
second part of Lemma \ref{le:STS} to show that (R) is commutative.
Similarly, (L) is commutative too. It follows that the outer square of is
commutative too. By the universal property of the coproduct we deduce that
the composition is associative, see the first paragraph of the proof of
Theorem \ref{thm:TTP}.

Furthermore, the relations in (\ref{modul}) together with the identities (%
\ref{cor1-1})-(\ref{cor1-4}) imply the equations (\ref{ec:unit}), (\ref%
{ec:alpha}) and (\ref{ec:fact}). Proceeding as in the proof of Theorem \ref%
{thm:TTP} we conclude that $\boldsymbol{A}\otimes _{{R}}\boldsymbol{B}$ is
an $\boldsymbol{M}$-category that factorizes through $\boldsymbol{A}$ and $%
\boldsymbol{B}$.
\end{proof}

\begin{remark}
Throughout this remark we assume that $\boldsymbol{M}$ is a $T$-distributive
monoidal category, where $T$ is an arbitrary set. In other words, any family
of objects in $\boldsymbol{M}$ has a coproduct and the tensor product is
distributive over all coproducts. It was noticed in \cite[\S 2.1 and \S 2.2]%
{RW} that, for such a monoidal category $\boldsymbol{M},$ one can define a
bicategory $\boldsymbol{M}$-$\boldsymbol{mat}$ as follows.

By construction, its $0$-cells are arbitrary sets. If $I$ and $J$ are two
sets, then the $1$-cells in $\boldsymbol{M}$-$\boldsymbol{mat}$ from $I$ to $%
J$ are the $J\times I$-indexed families of objects in $\boldsymbol{M}$. A $2$%
-cell with source $\{X_{ji}\}_{(j,i)\in J\times I}$ and target $%
\{Y_{ji}\}_{(j,i)\in J\times I}$ is a family $\{f_{ji}\}_{(j,i)\in J\times
I} $ of morphisms $f_{ji}:X_{ji}\rightarrow Y_{ji}.$ The composition of the $%
1$-cells $\{X_{kj}\}_{(k,j)\in K\times J}$ and $\{Y_{ji}\}_{(j,i)\in J\times
I} $ is the family $\{Z_{ki}\}_{(k,i)\in K\times I},$ where
\begin{equation*}
Z_{ki}:=\textstyle\bigoplus_{j\in J}X_{kj}\otimes Y_{ji}.
\end{equation*}%
The vertical composition in $\boldsymbol{M}$-$\boldsymbol{mat}$ of $%
\{f_{ji}\}_{(j,i)\in J\times I}$ and $\{g_{ji}\}_{(j,i)\in J\times I}$ makes
sense if and only if the source of $f_{j i}$ and the target of $g_{j i}$ are
equal for all $i$ and $j.$ If it exists, then it is defined by%
\begin{equation*}
\{f_{ji}\}_{(j,i)\in J\times I}\bullet \{g_{ji}\}_{(j,i)\in J\times
I}=\{f_{ji}\circ g_{ji}\}_{(j,i)\in J\times I}.
\end{equation*}
Let $\{f_{ji}\}_{(j,i)\in J\times I}$ and $\{f_{kj}^{\prime }\}_{(k,j)\in
K\times J}$ be $2$-cells such that $f_{ji}:X_{ji}\rightarrow Y_{ji}$ and $%
f_{kj}^{\prime }:X_{kj}^{\prime }\rightarrow Y_{kj}^{\prime }.$ By the
universal property of coproducts, for each $(k,i)\in K\times I$, there
exists a unique morphism $h_{ki}:\bigoplus_{j\in J}X_{kj}^{\prime }\otimes
X_{ji}\rightarrow \bigoplus_{j\in J}Y_{kj}^{\prime }\otimes Y_{ji}$ whose
restriction to $X_{kj}^{\prime }\otimes $ $X_{ji}$ is $f_{kj}^{\prime
}\otimes $ $f_{ji}$. By definition, the horizontal composition of $%
\{f_{kj}^{\prime }\}_{(k,j)\in K\times J}$ and $\{f_{ji}\}_{(j,i)\in J\times
I}$ is the family $\{h_{ki}\}_{(k,i)\in K\times I}.$ The identity $1$-cells
and $2$-cells in $\boldsymbol{M}$-$\boldsymbol{mat}$ are the obvious ones.

As pointed out in \cite{RW}, a monad on a set $S$ in $\boldsymbol{M}$-$%
\boldsymbol{mat}$ is an $\boldsymbol{M}$-category with the set of objects $S$%
, and conversely. In particular, given two $\boldsymbol{M}$-categories with
the same set of objects, one may speak about distributive laws between the
corresponding monads in $\boldsymbol{M}$-$\boldsymbol{mat}.$ In our
terminology, they are precisely the twisting systems. In view of \cite[\S 3.1%
]{RW}, factorizable enriched categories generalize strict factorization
systems.

In conclusion, the Theorem \ref{teo1} may be regarded as a version of \cite[%
Proposition 3.3]{RW} for enriched categories. For a simple twisting system $(%
\widetilde{R},\left\vert \cdots \right\vert )$ between $\boldsymbol{B}$ and $%
\boldsymbol{A},$ the enriched category $\boldsymbol{A}\otimes _{R}%
\boldsymbol{B}$ that we constructed in Theorem \ref{thm:TTP} can also be
described in terms of monads. Let $\rho :B\circ A\rightarrow A\circ B$
denote the distributive law associated to $(\widetilde{R},\left\vert \cdots
\right\vert ),$ where $(A,m_{A},1_{A})$ and $(B,m_{B},1_{B})$ are the monads
in $\boldsymbol{M}$-$\boldsymbol{mat}$ corresponding to $\boldsymbol{A}$ and
$\boldsymbol{B,}$ respectively. By the general theory of monads in a
bicategory, it follows that $A\circ B$ is a monad in $\boldsymbol{M}$-$%
\boldsymbol{mat}$ with respect to the multiplication and the unit given by
the formulae:%
\begin{equation*}
m:=\left( m_{A}\circ m_{B}\right) \bullet \left( {\text{Id}}_{A}\circ \rho
\circ {\text{Id}}_{B}\right) \text{\quad and\quad }1:=1_{A}\circ 1_{B}.
\end{equation*}
It is not difficult to show that $\boldsymbol{A}\otimes _{R}\boldsymbol{B}$
is the $\boldsymbol{M}$-category associated to $(A\circ B,m,1).$

By replacing $\boldsymbol{Set}$-$\boldsymbol{mat}$ with a suitable
bicategory, one obtains similar results for other algebraic structures, such
as PROs and PROPs; see \cite{La}. We also would like to note that
distributive laws between pseudomonads are investigated in \cite{Mar}.

We are indebted to the referee for pointing the papers \cite{La, Mar, RW}
out to us.
\end{remark}

\section{Matched pairs of enriched categories.}

Throughout this section $(\boldsymbol{M^{\prime }},\otimes ,\boldsymbol{1}%
,\chi )$ denote a braided category and we take $\boldsymbol{M}$ to be the
monoidal category $\boldsymbol{Coalg({M^{\prime }})}.$ Our aim is to
characterize simple twisting systems between two categories that are
enriched over $\boldsymbol{M}$. We start by investigating some properties of
the morphisms in $\boldsymbol{M}$. For the moment, we impose no conditions
on $\boldsymbol{M^{\prime }}$.

A slightly more general version of the following lemma is stated in \cite[%
Proposition 3.2]{La}. For the sake of completeness we include a proof of it.

\begin{lemma}
\label{le:coalgebra}Let $(C,\Delta _{C},\varepsilon _{C})$, $(D_{1},\Delta
_{D_{1}},\varepsilon _{D_{1}})$ and $(D_{2},\Delta _{D_{2}},\varepsilon
_{D_{2}})$ be coalgebras in $\boldsymbol{M^{\prime }}$. Let $f:C\rightarrow
D_{1}\otimes D_{2}$ be a morphism of coalgebras. Then $f_{1}:=(D_{1}\otimes
\varepsilon _{D_{2}})\circ f$ and $f_{2}:=(\varepsilon _{D_{1}}\otimes
D_{2})\circ f$ are coalgebra morphisms and the following relations hold:
\begin{align}
(f_{1}\otimes f_{2})\circ \Delta _{C}& =f,  \label{f1} \\
(f_{2}\otimes f_{1})\circ \Delta _{C}& =\chi _{D_{1},D_{2}}\circ
(f_{1}\otimes f_{2})\circ \Delta _{C}.  \label{f2}
\end{align}%
Conversely, let $f_{1}:C\rightarrow D_{1}$ and $f_{2}:C\rightarrow D_{2}$ be
coalgebra morphisms such that \eqref{f2} holds. Then $f:=(f_{1}\otimes
f_{2})\circ \Delta _{C}$ is a coalgebra map such that
\begin{equation}
(D_{1}\otimes \varepsilon _{D_{2}})\circ f=f_{1}\quad \text{\emph{and}}\quad
(\varepsilon _{D_{1}}\otimes D_{2})\circ f=f_{2}.  \label{le:f}
\end{equation}
\end{lemma}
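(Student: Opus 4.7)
My plan is to prove both directions by direct manipulation, starting from the fact that in the braided category $\boldsymbol{M^{\prime }}$ the coalgebra structure on $D_{1}\otimes D_{2}$ is given by $\varepsilon _{D_{1}\otimes D_{2}}=\varepsilon _{D_{1}}\otimes \varepsilon _{D_{2}}$ and $\Delta _{D_{1}\otimes D_{2}}=(D_{1}\otimes \chi _{D_{1},D_{2}}\otimes D_{2})\circ (\Delta _{D_{1}}\otimes \Delta _{D_{2}}).$ Throughout, naturality of the braiding and the identity $\chi _{\mathbf{1},\mathbf{1}}=\mathbf{1}$ (forced by Mac Lane coherence) will let me slide counits past $\chi $ freely.

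For the forward direction, counitality of $f_{1}$ and $f_{2}$ is immediate from $\varepsilon _{D_{1}\otimes D_{2}}\circ f=\varepsilon _{C}.$ Comultiplicativity of $f_{1}$ follows by post-composing the coalgebra relation $(f\otimes f)\circ \Delta _{C}=\Delta _{D_{1}\otimes D_{2}}\circ f$ with $D_{1}\otimes \varepsilon _{D_{2}}\otimes D_{1}\otimes \varepsilon _{D_{2}};$ on the right-hand side, unfolding $\Delta _{D_{1}\otimes D_{2}}$ and using naturality (which carries the two copies of $\varepsilon _{D_{2}}$ around $\chi _{D_{1},D_{2}}$) together with the counit axioms of $D_{1}$ and $D_{2}$ collapses the expression to $\Delta _{D_{1}}\circ f_{1},$ whereas the left-hand side gives $(f_{1}\otimes f_{1})\circ \Delta _{C}$ at once. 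The argument for $f_{2}$ is symmetric.

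To establish (\ref{f1}), I would rewrite $(f_{1}\otimes f_{2})\circ \Delta _{C}$ as $(D_{1}\otimes \varepsilon _{D_{2}}\otimes \varepsilon _{D_{1}}\otimes D_{2})\circ \Delta _{D_{1}\otimes D_{2}}\circ f;$ naturality of $\chi$ yields $(\varepsilon _{D_{2}}\otimes \varepsilon _{D_{1}})\circ \chi _{D_{1},D_{2}}=\varepsilon _{D_{1}}\otimes \varepsilon _{D_{2}},$ eliminating the braiding, and the counit axioms for $\Delta _{D_{1}}$ and $\Delta _{D_{2}}$ then reduce the expression to $f.$ Applied instead to $(f_{2}\otimes f_{1})\circ \Delta _{C},$ the very same calculation leaves the braiding intact and yields $\chi _{D_{1},D_{2}}\circ f;$ invoking (\ref{f1}) to rewrite $f$ as $(f_{1}\otimes f_{2})\circ \Delta _{C}$ then gives (\ref{f2}).

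For the converse, set $f:=(f_{1}\otimes f_{2})\circ \Delta _{C}.$ The counit condition $\varepsilon _{D_{1}\otimes D_{2}}\circ f=\varepsilon _{C}$ and the two identities in (\ref{le:f}) follow at once from $\varepsilon _{D_{i}}\circ f_{i}=\varepsilon _{C}$ and the counit axiom of $C.$ The comultiplication condition is the crux: using that $f_{1}$ and $f_{2}$ are coalgebra maps I would first expand $\Delta _{D_{1}\otimes D_{2}}\circ f$ as $(D_{1}\otimes \chi _{D_{1},D_{2}}\otimes D_{2})\circ (f_{1}\otimes f_{1}\otimes f_{2}\otimes f_{2})\circ (\Delta _{C}\otimes \Delta _{C})\circ \Delta _{C},$ then use coassociativity to rewrite $(\Delta _{C}\otimes \Delta _{C})\circ \Delta _{C}=(C\otimes \Delta _{C}\otimes C)\circ (\Delta _{C}\otimes C)\circ \Delta _{C},$ so that a single $\Delta _{C}$ occupies the middle two tensor factors of $C^{\otimes 4};$ in that middle the hypothesis (\ref{f2}) converts $(f_{2}\otimes f_{1})\circ \Delta _{C}$ into $\chi _{D_{1},D_{2}}\circ (f_{1}\otimes f_{2})\circ \Delta _{C},$ after which commuting $\chi $ back past $f_{1}\otimes f_{2}$ by naturality identifies the whole expression with $(f\otimes f)\circ \Delta _{C}.$ The main obstacle is precisely this bookkeeping: one must pick the coassociativity decomposition of $\Delta _{C}^{2}$ that positions $\Delta _{C}$ at the middle two factors where the hypothesis is applicable, and then keep track of the braiding produced by naturality; every other verification reduces, after one application of naturality of $\chi$, to the counit and coassociativity axioms of $C,$ $D_{1}$ and $D_{2}.$
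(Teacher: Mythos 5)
Your proposal is correct and follows essentially the same route as the paper: both directions rest on expanding $\Delta_{D_1\otimes D_2}$, cancelling or isolating the braiding via naturality of $\chi$ and the counit axioms, and in the converse using coassociativity to place a single $\Delta_C$ in the middle two tensor factors where \eqref{f2} applies. The only cosmetic difference is that you verify comultiplicativity of $f_1$ and $f_2$ by direct computation, whereas the paper simply notes that $D_1\otimes\varepsilon_{D_2}$ and $\varepsilon_{D_1}\otimes D_2$ are themselves coalgebra morphisms and composes.
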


\begin{proof}
Let us assume that $f:C\rightarrow D_{1}\otimes D_{2}$ is a coalgebra
morphism. Let $\varepsilon _{i}:=\varepsilon _{D_{i}}$, for $i=1,2.$
Clearly, $D_{1}\otimes \varepsilon _{D_{2}}$ and $\varepsilon
_{D_{1}}\otimes D_{2}$ are coalgebra morphisms. In conclusion $f_{1}$ and $%
f_{2}$ are morphisms in $\boldsymbol{M}$. On the other hand, as $f$ is a
morphism in $\boldsymbol{M}$ we have%
\begin{equation}
(D_{1}\otimes \chi _{D_{1},D_{2}}\otimes D_{2})\circ (\Delta _{D_{1}}\otimes
\Delta _{D_{2}})\circ f=(f\otimes f)\circ \Delta _{C}.  \label{f3}
\end{equation}%
Hence, using the definition of $f_{1}$ and $f_{2}$, the relation \eqref{f3},
the fact that the braiding is a natural transformation and the compatibility
relation between the comultiplication and the counit we get%
\begin{align*}
(f_{1}\otimes f_{2})\circ \Delta _{C}& =(D_{1}\otimes \varepsilon
_{2}\otimes \varepsilon _{1}\otimes D_{2})\circ (f\otimes f)\circ \Delta _{C}
\\
& =\left[D_{1}\otimes \left((\varepsilon _{2}\otimes \varepsilon _{1})\circ
\chi _{D_{1,}D_{2}}\otimes D_{2}\right)\circ (\Delta _{D_{1}}\otimes \Delta
_{D_{2}})\right]\circ f \\
& =(D_{1}\otimes \varepsilon _{1}\otimes \varepsilon _{2}\otimes D_{2})\circ
(\Delta _{D_{1}}\otimes \Delta _{D_{2}})\circ f=f.
\end{align*}%
By applying $\varepsilon _{1}\otimes D_{2}\otimes D_{1}\otimes \varepsilon
_{2}$ to (\ref{f3}) and using once again the compatibility between the
comultiplication and the counit we obtain%
\begin{align*}
(f_{2}\otimes f_{1})\circ \Delta _{C}& =(\varepsilon _{1}\otimes
D_{2}\otimes D_{1}\otimes \varepsilon _{2})\circ (f\otimes f)\circ \Delta
_{C} \\
& =(\varepsilon _{1}\otimes D_{2}\otimes D_{1}\otimes \varepsilon _{2})\circ
(D_{1}\otimes \chi _{D_{1},D_{2}}\otimes D_{2})\circ (\Delta _{D_{1}}\otimes
\Delta _{D_{2}})\circ f \\
& =\chi _{D_{1},D_{2}}\circ (\varepsilon _{1}\otimes D_{1}\otimes
D_{2}\otimes \varepsilon _{2})\circ (\Delta _{D_{1}}\otimes \Delta
_{D_{2}})\circ f =\chi _{D_{1},D_{2}}\circ f.
\end{align*}%
Conversely, let us assume that $f_{1}:C\rightarrow D_{1}$ and $%
f_{2}:C\rightarrow D_{2}$ are morphisms in $\boldsymbol{M}$ such that %
\eqref{f2} holds. Let $f:=(f_{1}\otimes f_{2})\circ $ $\Delta _{C}.$ By the
definition of the comultiplication on $D_{1}\otimes D_{2}$ and the fact that
$f_{1}$ and $f_{2}$ are morphisms in $\boldsymbol{M,}$ we get%
\begin{align*}
\Delta _{D_{1}\otimes D_{2}}\circ f&= (D_{1}\otimes \chi
_{D_{1},D_{2}}\otimes D_{2})\circ (\Delta _{D_{1}}\otimes \Delta
_{D_{2}})\circ (f_{1}\otimes f_{2})\circ \Delta _{C} \\
&= (D_{1}\otimes \chi _{D_{1},D_{2}}\otimes D_{2})\circ (f_{1}\otimes
f_{1}\otimes f_{2}\otimes f_{2})\circ (\Delta _{C}\otimes \Delta _{C})\circ
\Delta _{C}.
\end{align*}%
Taking into account (\ref{f2}) and the fact that the comultiplication is
coassociative, it follows that%
\begin{align*}
\Delta _{D_{1}\otimes D_{2}}\circ f &= \left[f_{1}\otimes \left( \chi
_{D_{1},D_{2}}\circ (f_{1}\otimes f_{2})\circ \Delta _{C}\right)\otimes f_{2}%
\right] \circ (C\otimes \Delta _{C})\circ \Delta _{C} \\
&=\left[ f_{1}\otimes \left( (f_{2}\otimes f_{1})\circ \Delta _{C}\right)
\otimes f_{2}\right] \circ (C\otimes \Delta _{C})\circ \Delta _{C} \\
&=\left[\left((f_{1}\otimes f_{2})\circ \Delta _{C}\right)\otimes
\left((f_{1}\otimes f_{2}\right)\circ \Delta _{C})\right]\circ \Delta
_{C}=(f\otimes f)\circ \Delta _{C}.
\end{align*}%
The formula that defines $f$ together with $\varepsilon _{i}\circ
f_{i}=\varepsilon _{C}$ yield
\begin{equation*}
(\varepsilon _{1}\otimes \varepsilon _{2})\circ f=(\varepsilon _{1}\circ
f_{1}\otimes \varepsilon _{2}\circ f_{2})\circ \Delta _{C}=(\varepsilon
_{C}\otimes \varepsilon _{C})\circ \Delta _{C}=\varepsilon _{C}.
\end{equation*}%
Thus $f$ is a morphism of coalgebras, so the lemma is proved. The equations
in \eqref{le:f} are obvious, as $\varepsilon _{i}\circ f_{i}=\varepsilon
_{C} $.
\end{proof}

\begin{remark}
\label{re:f=g}Let $f^{\prime },f^{\prime \prime }:C\rightarrow D_{1}\otimes
D_{2}$ be coalgebra morphisms. By the preceding lemma, $f^{\prime }$ and $%
f^{\prime \prime }$ are equal if and only if
\begin{equation*}
(\varepsilon _{1}\otimes D_{2})\circ f^{\prime }=(\varepsilon _{1}\otimes
D_{2})\circ f^{\prime \prime }\text{\quad and\quad }(D_{1}\otimes
\varepsilon _{2})\circ f^{\prime }=(D_{1}\otimes \varepsilon _{2})\circ
f^{\prime \prime }.\text{ }
\end{equation*}
\end{remark}

\begin{fact}[The morphisms $\boldsymbol{_{x}\triangleright _{z}^{y}}$ and $%
\boldsymbol{_{x}\triangleleft _{z}^{y}}$.]
\label{coalg(M')} Let $\boldsymbol{A}$ and $\boldsymbol{B}$ denote two $%
\boldsymbol{M}$-categories whose objects are the elements of a set $S$. The
hom-objects of $\boldsymbol{A}$ and $\boldsymbol{B}$ are coalgebras, which
will be denoted by $(_{x}A_{y},{}_{x}\Delta _{y}^{A},{}_{x}\varepsilon
_{y}^{A})$ and $(_{x}B_{y},{}_{x}\Delta _{y}^{B},{}_{x}\varepsilon _{y}^{B})$%
. By definition, the composition and the identity maps in $\boldsymbol{A}$
and $\boldsymbol{B}$ are coalgebra morphisms. Note that the comultiplication
of $_{x}B_{y}A_{z}$ is given by
\begin{equation*}
\Delta _{_{x}B_{y}A_{z}}=(_{x}B_{y}\otimes \chi
_{_{x}B_{y},_{y}A_{z}}\otimes {}_{y}A_{z})\circ {}_{x}\Delta _{y}^{B}\Delta
_{z}^{A}.
\end{equation*}%
Let $|\cdots |:S^{3}\rightarrow S$ be a function and let $\widetilde{R}$
denote an $S^{3}$-indexed family of coalgebra morphisms $_{x}\widetilde{R}%
_{z}^{y}:{}_{x}B_{y}A_{z}\rightarrow {}_{x}A_{|xyz|}B_{z}$. We define $%
_{x}\triangleright _{z}^{y}:{}_{x}B_{y}A_{z}\rightarrow {}_{x}A_{|xyz|}$ and
$_{x}\triangleleft _{z}^{y}:{}_{x}B_{y}A_{z}\rightarrow {}_{|xyz|}B_{z}$ by
\begin{equation}
_{x}\triangleright _{z}^{y}:={}_{x}A_{\left\vert xyz\right\vert }\varepsilon
_{z}^{B}\circ {}_{x}\widetilde{R}_{z}^{y}\quad \text{and}\quad
_{x}\triangleleft _{z}^{y}:=({}_{x}\varepsilon _{\left\vert xyz\right\vert
}^{A}B_{z})\circ {}_{x}\widetilde{R}_{z}^{y}.  \label{lr}
\end{equation}%
In view of Lemma \ref{le:coalgebra}, $_{x}\triangleright _{z}^{y}$ and $%
_{x}\triangleleft _{z}^{y}$ are coalgebra morphisms and they satisfy the
relations%
\begin{align}
(_{x}\triangleright _{z}^{y}\otimes {}_{x}\triangleleft _{z}^{y})\circ
\Delta _{_{x}B_{y}A_{z}}& ={}_{x}\widetilde{R}_{z}^{y},  \label{mp1} \\
(_{x}\triangleleft _{z}^{y}\otimes {}_{x}\triangleright _{z}^{y})\circ
\Delta _{{_{x}B_{y}A_{z}}}& =\chi _{_{x}A_{|xyz|},_{|xyz|}B_{z}}\circ
(_{x}\triangleright _{z}^{y}\otimes {}_{x}\triangleleft _{z}^{y})\circ
\Delta _{_{x}B_{y}A_{z}}.  \label{mp2}
\end{align}%
Conversely, if one starts with $\triangleright :=\{_{x}\triangleright
_{z}^{y}\}_{x,y,z\in S}$ and ${\triangleleft :=}$ $\{_{x}{\triangleleft }%
_{z}^{y}\}_{x,y,z\in S},$ two families of coalgebra maps that satisfy (\ref%
{mp2}), then by formula (\ref{mp1}) we get the set $\widetilde{R}:={}\{_{x}%
\widetilde{R}_{z}^{y}\}_{x,y,z\in S}$ whose elements are coalgebra maps, cf.
Lemma \ref{le:coalgebra}. Therefore, there is an one-to-one correspondence
between the couples $(\triangleright ,{\triangleleft )}$ and the sets $%
\widetilde{R}$ as above. Our goal is to characterize those couples $%
(\triangleright ,{\triangleleft )}$ that corresponds to a simple twisting
system in $\boldsymbol{{M^{\prime }}}.$
\end{fact}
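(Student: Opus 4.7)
The plan is to deduce the entire content of this fact block as a direct, pointwise application of Lemma \ref{le:coalgebra}. For each triple $(x,y,z) \in S^3$, I would take $C := {}_xB_yA_z$, viewed as a coalgebra in $\boldsymbol{M}'$ via the braided tensor product comultiplication written down in \S \ref{coalg(M')}, and set $D_1 := {}_xA_{|xyz|}$, $D_2 := {}_{|xyz|}B_z$, and $f := {}_x\widetilde{R}_z^y$.

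With this choice, the formulae (\ref{lr}) defining ${}_x\triangleright_z^y$ and ${}_x\triangleleft_z^y$ coincide precisely with the morphisms $f_1 = (D_1 \otimes \varepsilon_{D_2}) \circ f$ and $f_2 = (\varepsilon_{D_1} \otimes D_2) \circ f$ from the hypothesis of Lemma \ref{le:coalgebra}. The first half of the lemma therefore gives at once that both are morphisms of coalgebras and that the identities (\ref{f1}) and (\ref{f2}) hold; these are exactly (\ref{mp1}) and (\ref{mp2}). For the converse direction, given coalgebra morphisms $\triangleright$ and $\triangleleft$ satisfying (\ref{mp2}), the second half of the lemma guarantees that ${}_x\widetilde{R}_z^y := ({}_x\triangleright_z^y \otimes {}_x\triangleleft_z^y) \circ \Delta_{{}_xB_yA_z}$ is again a coalgebra morphism with the correct source and target.

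The one-to-one correspondence between the families $\widetilde{R}$ and the pairs $(\triangleright,\triangleleft)$ is then the observation that these two constructions are mutually inverse: the composition $(\triangleright,\triangleleft) \mapsto \widetilde{R} \mapsto (\triangleright,\triangleleft)$ is the identity by (\ref{le:f}), while $\widetilde{R} \mapsto (\triangleright,\triangleleft) \mapsto \widetilde{R}$ is the identity by (\ref{f1}). I do not anticipate any serious obstacle; the only point requiring attention is to confirm that the coalgebra structure carried by ${}_xB_yA_z$ really is the braided tensor product one, so that the twist appearing in (\ref{mp2}) genuinely involves $\chi_{{}_xA_{|xyz|},{}_{|xyz|}B_z}$ rather than a bare flip. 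That subtlety is already absorbed into the explicit formula for $\Delta_{{}_xB_yA_z}$ given in \S \ref{coalg(M')}, so the argument then reduces to bookkeeping.
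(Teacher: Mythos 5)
Your proposal is correct and is exactly the argument the paper intends: the fact is stated as a direct pointwise application of Lemma \ref{le:coalgebra} with $C={}_xB_yA_z$, $D_1={}_xA_{|xyz|}$, $D_2={}_{|xyz|}B_z$ and $f={}_x\widetilde{R}_z^y$, so that (\ref{mp1}) and (\ref{mp2}) are instances of (\ref{f1}) and (\ref{f2}), the converse follows from the second half of the lemma, and the bijectivity from (\ref{le:f}) and (\ref{f1}). Your remark about checking that ${}_xB_yA_z$ carries the braided tensor product coalgebra structure is the right point of care, and it is indeed settled by the displayed formula for $\Delta_{{}_xB_yA_z}$.
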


\begin{lemma}
\label{le:lr}The statements below are true.

\begin{enumerate}
\item If $\left\vert xy\left\vert yzt\right\vert \right\vert =\left\vert
xzt\right\vert $ then the relation (\ref{cor1-1}) is equivalent to the
following equations:
\begin{align}
{_{x}\triangleleft }_{t}^{z}\circ {}_{x}b_{z}^{y}A_{t}& ={}_{\left\vert
xzt\right\vert }b_{t}^{\left\vert yzt\right\vert }\circ {_{x}\triangleleft }%
_{\left\vert yzt\right\vert }^{y}B_{t}\circ (_{x}B_{y}\otimes {%
_{y}\triangleright }_{t}^{z}\otimes {_{y}\triangleleft }_{t}^{z})\circ
(_{x}B_{y}\otimes \Delta _{{}_{y}B_{z}A_{t}}),  \label{lr1-1} \\
{_{x}\triangleright }_{t}^{z}\circ {}_{x}b_{z}^{y}A_{t}& ={_x\triangleright}%
_{|yzt|}^{y}\circ {}_{x}B{_y\triangleright} _{t}^{z}.  \label{lr1-2}
\end{align}

\item If $\left\vert xyz\left\vert zt\right\vert \right\vert =\left\vert
xyt\right\vert $ then the relation (\ref{cor1-2}) is equivalent to the
following equations:
\begin{align}
{_{x}\triangleright }_{t}^{y}\circ {}_{x}B_{y}a_{t}^{z}&
={}_{x}a_{\left\vert xyt\right\vert }^{\left\vert xyz\right\vert }\circ
{}_{x}A_{\left\vert xyz\right\vert }\triangleright _{t}^{z}\circ \;({%
_x\triangleright} _{z}^{y}\otimes {_{x}\triangleleft} _{z}^{y}\otimes
{}_{z}A_{t})\circ (\Delta _{{_{x}B_{y}A_{z}}}\otimes {}_{z}A_{t}),
\label{lr2-1} \\
{_{x}\triangleleft }_{t}^{y}\circ {}_{x}B_{y}a_{t}^{z}& ={_{\left\vert
xyz\right\vert }\triangleleft }_{t}^{z}\circ \;{_{x}\triangleleft }%
_{z}^{y}A_{t}.  \label{lr2-2}
\end{align}

\item If $\left\vert xyy\right\vert =x$ then the relation (\ref{cor1-3}) is
equivalent to the following equations:
\begin{align}
{_{x}\triangleleft }_{y}^{x}\circ \left( 1_{x}^{B}\otimes {}_{x}A_{y}\right)
& ={}_{x}{}\varepsilon _{y}^{A}\otimes 1_{y}^{B},  \label{lr3-1} \\
{_{x}\triangleright }_{y}^{x}\circ \left( 1_{x}^{B}\otimes
{}_{x}A_{y}\right) & ={_{x}A}_{y}{}.  \label{lr3-2}
\end{align}

\item If $\left\vert xxy\right\vert =y$ then the relation (\ref{cor1-4}) is
equivalent to the following equations:%
\begin{align}
{_{x}\triangleleft }_{y}^{y}\circ \left( _{x}B_{y}\otimes 1_{y}^{A}{}\right)
& ={_{x}B}_{y},  \label{lr4-1} \\
{_{x}\triangleright }_{y}^{y}\circ \left( _{x}B_{y}\otimes 1_{y}^{A}\right)
& =1_{x}^{A}\otimes {}_{x}\varepsilon _{y}^{B}{}.  \label{lr4-2}
\end{align}
\end{enumerate}
\end{lemma}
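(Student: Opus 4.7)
The plan is to apply Remark~\ref{re:f=g} to each of the four equivalences. In every case, both sides of the identity in question are coalgebra morphisms into a tensor product coalgebra of the form ${}_xA_u \otimes {}_uB_v$, since they are built out of $\widetilde{R}$ (a coalgebra morphism by hypothesis) together with the structure maps $a$, $b$, $1^A$, $1^B$ (which are coalgebra morphisms because $\boldsymbol{A}$ and $\boldsymbol{B}$ are enriched over $\boldsymbol{Coalg}(\boldsymbol{M^{\prime}})$) and identities; these operations preserve the class of coalgebra morphisms. By Remark~\ref{re:f=g}, two such morphisms coincide if and only if they agree after composition with the two counit projections ${}_xA_u \otimes \varepsilon^B_v$ and $\varepsilon^A_u \otimes {}_uB_v$. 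By the definition~(\ref{lr}), these projections send ${}_x\widetilde{R}_z^y$ to ${}_x\triangleright_z^y$ and ${}_x\triangleleft_z^y$ respectively.

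For parts (3) and (4) the two projections of (\ref{cor1-3}) and (\ref{cor1-4}) reduce immediately to the displayed equations, using $\varepsilon^A\circ 1^A=\mathrm{id}_{\mathbf{1}}$ and $\varepsilon^B\circ 1^B=\mathrm{id}_{\mathbf{1}}$. For parts (1) and (2) the LHS contains two occurrences of $\widetilde{R}$, and the key move is to propagate the outer counit inward through the adjacent composition map by the coalgebra compatibility $\varepsilon^B\circ b=\varepsilon^B\otimes\varepsilon^B$ in part (1), respectively $\varepsilon^A\circ a=\varepsilon^A\otimes\varepsilon^A$ in part (2). Whenever a counit lands adjacent to one of the two copies of $\widetilde{R}$, definition~(\ref{lr}) converts it directly into $\triangleright$ or $\triangleleft$; whenever a copy of $\widetilde{R}$ is not reached by any counit, I would substitute its matched-pair decomposition~(\ref{mp1}), $\widetilde{R}=(\triangleright\otimes\triangleleft)\circ\Delta$. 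In the two projections where only one of the $\widetilde{R}$'s is reached by the counit---namely the $\triangleleft$-projection of (\ref{cor1-1}) and the $\triangleright$-projection of (\ref{cor1-2})---the substituted $\Delta$ survives in the final formula, producing (\ref{lr1-1}) and (\ref{lr2-1}). In the other two projections both copies of $\widetilde{R}$ are hit, and the counit axiom $(C\otimes\varepsilon_C)\circ\Delta_C=C$ absorbs the spurious factor introduced by (\ref{mp1}), yielding the simpler (\ref{lr1-2}) and (\ref{lr2-2}).

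The converse direction of each equivalence is immediate from Remark~\ref{re:f=g}. I expect no conceptual obstacle: the main hazard is bookkeeping, namely tracking tensor factors and subscripts and choosing at each step the correct factorization of the counit projection (for instance $\varepsilon^A\otimes b = (\varepsilon^A\otimes {}_uB_t)\circ({}_xA_u\otimes b) = b\circ(\varepsilon^A\otimes B\otimes B)$) in order to apply functoriality and the counit axioms cleanly. The numerical hypotheses such as $|xy|yzt||=|xzt|$ are exactly what guarantees that the source and target coalgebras of the two sides of each identity match, so that Remark~\ref{re:f=g} is applicable in the first place.
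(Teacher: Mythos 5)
Your proposal follows essentially the same route as the paper's own proof: both sides of each identity are recognized as coalgebra morphisms into a tensor product coalgebra, Remark \ref{re:f=g} reduces equality to equality of the two counit projections, and the projections are computed via the definition (\ref{lr}), the counit compatibility of the composition maps $a$ and $b$, and the decomposition (\ref{mp1}) for the copy of $\widetilde{R}$ not reached by a counit. The only (immaterial) difference is that in the projections where both copies of $\widetilde{R}$ are reached, the paper lets the counit propagate through $b$ (resp.\ $a$) directly onto both copies rather than first substituting (\ref{mp1}) and then invoking the counit axiom; the computations are equivalent.
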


\begin{proof}
In order to prove the first statement we apply the Remark \ref{re:f=g} to
\begin{equation*}
f^{\prime }:={}_{x}\widetilde{R}_{t}^{z}\circ {}_{x}b_{z}^{y}A_{t}\quad
\text{and}\quad f^{\prime \prime }:={}_{x}A_{|xzt|}b_{t}^{|yzt|}\circ {}_{x}%
\widetilde{R}_{|yzt|}^{y}B_{t}\circ {}_{x}B_{y}\widetilde{R}_{t}^{z}.
\end{equation*}%
Note that $f^{\prime \prime }$ is well defined and its target is $%
_{x}A_{|xzt|}B_{t},$ since the codomain of ${}_{x}\widetilde{R}%
_{|yzt|}^{y}B_{t}\circ {}_{x}B_{y}\widetilde{R}_{t}^{z}$ is $%
_{x}A_{\left\vert xy\left\vert yzt\right\vert \right\vert }B_{t}$ and $%
\left\vert xy\left\vert yzt\right\vert \right\vert =\left\vert
xzt\right\vert $. Clearly, $f^{\prime }$ and $f^{\prime \prime }$ are
coalgebra morphisms, since the composite and the tensor product of two
morphisms in $\boldsymbol{M}$ remain in $\boldsymbol{M}.$ An easy
computation, based on the equation (\ref{mp1}) and the formulae of $%
_{x}\triangleright _{z}^{y}$ and $_{x}\triangleleft _{z}^{y}$, yields us%
\begin{align*}
_{x}\varepsilon _{|xzt|}^{A}B_{t}\circ f^{\prime }& ={}_{x}\triangleleft
_{t}^{z}\circ {}_{x}b_{z}^{y}A_{t}, \\
_{x}A_{|xzt|}\varepsilon _{t}^{B}\circ f^{\prime }& ={}_{x}\triangleright
_{t}^{z}\circ {}_{x}b_{z}^{y}A_{t}, \\
_{x}\varepsilon _{|xzt|}^{A}B_{t}\circ f^{\prime \prime }& ={}_{\left\vert
xzt\right\vert }b_{t}^{\left\vert yzt\right\vert }\circ {}_{x}\triangleleft
_{\left\vert yzt\right\vert }^{y}B_{t}\circ (_{x}B_{y}\otimes
{}_{y}\triangleright _{t}^{z}\otimes {}_{y}\triangleleft _{t}^{z})\circ
(_{x}B_{y}\otimes \Delta {}_{\,_{y}B_{z}A_{t}}).
\end{align*}%
Taking into account that $_{x}b_{z}^{y}$ is a coalgebra morphism and using
the definition of $_{x}\triangleright _{z}^{y}$ we get
\begin{equation*}
_{x}A_{|xzt|}\varepsilon _{t}^{B}\circ f^{\prime \prime
}={}_{x}A_{\left\vert xzt\right\vert }\varepsilon _{\left\vert
yzt\right\vert }^{B}\circ {}_{x}\widetilde{R}_{\left\vert yzt\right\vert
}^{y}\circ \left( _{x}B_{y}\otimes (_{y}A_{\left\vert yzt\right\vert
}\varepsilon _{t}^{B}\circ {}_{y}\widetilde{R}_{t}^{z})\right)
={}_{x}\triangleright _{|yzt|}^{y}\circ {}_{x}B_{y}\triangleright _{t}^{z}.
\end{equation*}%
In view of the Remark \ref{re:f=g}, we have $f^{\prime }=f^{\prime \prime }$
if and only if
\begin{equation*}
_{x}A_{|xzt|}\varepsilon _{t}^{B}\circ f^{\prime
}={}_{x}A_{|xzt|}\varepsilon _{t}^{B}\circ f^{\prime \prime }\quad \text{%
and\quad }_{x}\varepsilon _{|xzt|}^{A}B_{t}\text{ }\circ f^{\prime
}={}_{x}\varepsilon _{|xzt|}^{A}B_{t}\circ f^{\prime \prime }.
\end{equation*}%
Thus, if $\left\vert xy\left\vert yzt\right\vert \right\vert =\left\vert
xzt\right\vert ,$ then (\ref{cor1-1}) is equivalent to (\ref{lr1-1})
together with (\ref{lr1-2}). We omit the proof of the second statement,
being similar.

To prove the third part of the lemma we reiterate the above reasoning. We
now take $f^{\prime }$ and $f^{\prime \prime }$ to be the coalgebra morphisms%
\begin{equation*}
f^{\prime }:={}_{x}\widetilde{R}_{y}^{x}\circ \left( 1_{x}^{B}\otimes
{}_{x}A_{y}\right) \quad \text{and\quad }f^{\prime \prime
}:={}_{x}A_{y}{}\otimes 1_{y}^{B}.
\end{equation*}%
Since $\left\vert xxy\right\vert =y$ both $f^{\prime }$and $f^{\prime \prime
}$ target in $_{x}A_{y}B_{y}.$ It is easy to see that (\ref{lr3-1}) together
with (\ref{lr3-2}) are equivalent to (\ref{cor1-3}). Similarly, one shows
that the fourth statement is true.
\end{proof}

\begin{theorem}
\label{thm:mp1}We keep the notation and the assumptions from \S \ref%
{coalg(M')}. The set $\widetilde{R}$ is a simple twisting system in $%
\boldsymbol{{M^{\prime }}}$ if and only if the families $\triangleright $
and ${\triangleleft }$ satisfy the following conditions:
\end{theorem}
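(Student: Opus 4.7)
The plan is to assemble the theorem by patching together the two preparatory lemmas. From \S\ref{coalg(M')} and Lemma \ref{le:coalgebra}, starting with a coalgebra morphism $_x\widetilde{R}_z^y\colon{}_xB_yA_z\to{}_xA_{|xyz|}B_z$ and defining $_x\triangleright_z^y$ and $_x\triangleleft_z^y$ by (\ref{lr}), one obtains coalgebra morphisms that satisfy (\ref{mp2}); conversely any pair $(\triangleright,\triangleleft)$ of coalgebra morphisms with (\ref{mp2}) assembles via (\ref{mp1}) into a coalgebra morphism $\widetilde{R}$. Moreover, by Remark~\ref{re:f=g} the reconstruction is bijective. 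Thus at the level of raw data (ignoring the twisting system axioms) we already have an equivalence between the $\widetilde R$-data and the $(\triangleright,\triangleleft)$-data.

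Next I would translate the defining axioms. By the definition of a simple twisting system (see \S\ref{STS}), $\widetilde{R}$ is simple if and only if it satisfies the four conditions (i)--(iv) of Corollary~\ref{cor: R simplu1}, plus the technical non-initial-object condition. Under the matching indexing constraints on $|\cdots|$, Lemma~\ref{le:lr} asserts that (\ref{cor1-1}) is equivalent to the conjunction of (\ref{lr1-1}) and (\ref{lr1-2}); (\ref{cor1-2}) to the conjunction of (\ref{lr2-1}) and (\ref{lr2-2}); (\ref{cor1-3}) to (\ref{lr3-1}) and (\ref{lr3-2}); and (\ref{cor1-4}) to (\ref{lr4-1}) and (\ref{lr4-2}). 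Therefore the conditions defining a matched pair (i.e.\ the listed axioms on $\triangleright$ and $\triangleleft$) are equivalent to the conditions defining a simple twisting system for the associated $\widetilde R$. The non-initial-object hypothesis transfers verbatim, since $_xA_{|xyz|}B_z$ is a tensor product of the targets of $\triangleright$ and $\triangleleft$.

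The actual proof then reduces to assembling these pieces: given $\widetilde R$ simple, apply the forward direction of Lemma~\ref{le:lr} term by term to obtain the axioms on $(\triangleright,\triangleleft)$; conversely, given $(\triangleright,\triangleleft)$ satisfying the matched-pair axioms, build $\widetilde R$ via (\ref{mp1}), verify it is a coalgebra morphism using the braiding compatibility and Lemma~\ref{le:coalgebra}, and then apply the reverse direction of Lemma~\ref{le:lr} to recover conditions (i)--(iv) of Corollary~\ref{cor: R simplu1}. Since a simple twisting system is by definition what these four conditions give, the equivalence is complete.

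The main obstacle is not computational---the heavy work has been front-loaded into Lemma~\ref{le:coalgebra} and Lemma~\ref{le:lr}---but rather bookkeeping: one must confirm that the two decompositions produced by Lemma~\ref{le:lr} (one for $\triangleright$, one for $\triangleleft$) are jointly equivalent to the single equation (\ref{cor1-1})--(\ref{cor1-4}), which in turn relies on Remark~\ref{re:f=g} to detect equality of coalgebra morphisms into a tensor product by comparing the two counit projections. Once this bookkeeping is in place, the theorem follows without further calculation.
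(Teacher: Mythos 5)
Your proposal is correct and follows essentially the same route as the paper: condition (i) is exactly the extra non-initial-object clause built into the definition of a simple twisting system, the bijection between $\widetilde{R}$ and $(\triangleright,\triangleleft)$ is already supplied by Lemma \ref{le:coalgebra} and \S \ref{coalg(M')}, and the remaining work is the term-by-term application of Lemma \ref{le:lr} to convert each of the conditions of Corollary \ref{cor: R simplu1} into the corresponding pair of equations. The paper's own proof is just this observation stated in three sentences, so no further commentary is needed.
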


\begin{enumerate}
\item[(i)] If $_{x}B_{y}A_{z}$ is not an initial object then $%
_{x}A_{\left\vert xyz\right\vert }B_{z}$ is not an initial object as well.

\item[(ii)] If $_{x}B_{y}B_{z}A_{t}$ is not an initial object in $%
\boldsymbol{{M^{\prime }}},$ then $|xy|yzt||=|xzt|$ and the equations (\ref%
{lr1-1}) and (\ref{lr1-2}) hold.

\item[(iii)] If $_{x}B_{y}A_{z}A_{t}$ is not an initial object in $%
\boldsymbol{{M^{\prime }}},$ then $||xyz|zt|=|xyz|$ and the equations (\ref%
{lr2-1}) and (\ref{lr2-2}) hold.

\item[(iv)] If $_{x}A_{y}$ is not an initial object in $\boldsymbol{{%
M^{\prime }}},$ then $|xxy|=y$ and the equations (\ref{lr3-1}) and (\ref%
{lr3-2}) hold.

\item[(v)] If $_{x}B_{y}$ is not an initial object in $\boldsymbol{{%
M^{\prime }}},$ then $|xyy|=x$ and the equations (\ref{lr4-1}) and (\ref%
{lr4-2}) hold.
\end{enumerate}

\begin{proof}
The condition (i) is a part of the definition of simple twisting systems. If
$_{x}B_{y}B_{z}A_{t}$ is not an initial object in $\boldsymbol{M^{\prime }}$
then we may assume that $|xy|yzt||=|xzt|.$ Thus, by Lemma \ref{le:lr}, the
relation (\ref{cor1-1}) and the equations (\ref{lr1-1}) and (\ref{lr1-2})
are equivalent. To conclude the proof we proceed in a similar way.
\end{proof}

\begin{fact}[Matched pairs and the bicrossed product.]
\label{fa:MP} Let ${\triangleright :=}\{_{x}\triangleright
_{z}^{y}\}_{x,y,z\in S}$ and $\triangleleft :=\{_{x}\triangleleft
_{z}^{y}\}_{x,y,z\in S}$ be two families of maps as in \S \ref{coalg(M')}.
We shall say that the quintuple $(\boldsymbol{A},\boldsymbol{B},{%
\triangleright },\triangleleft ,\left\vert \cdots \right\vert )$ is a\emph{\
matched pair} of $\boldsymbol{M}$-categories if and only if $\triangleright $
and ${\triangleleft }$ satisfy the conditions (i)-(v) from the above
theorem. For a matched pair $(\boldsymbol{A},\boldsymbol{B},{\triangleright }%
,\triangleleft ,\left\vert \cdots \right\vert )$ we have just seen that $(%
\widetilde{R},\left\vert \cdots \right\vert )$ is a simple twisting system
in $\boldsymbol{{M^{\prime }}}$, where $\widetilde{R}:=\{_{x}\widetilde{R}%
_{z}^{y}\}_{x,y,z\in S}$ is the set of coalgebra morphisms which are defined
by the formula (\ref{mp1}). Hence, supposing that $\boldsymbol{{M^{\prime }}}
$ is an $S$-distributive domain, we may construct the twisted tensor product
$\boldsymbol{A}\otimes _{R}\boldsymbol{B},$ which is an enriched category
over $\boldsymbol{{M^{\prime }}}.$ We shall call it the \emph{bicrossed
product} of $(\boldsymbol{A},\boldsymbol{B},{\triangleright },\triangleleft
,\left\vert \cdots \right\vert )$ and we shall denote it by $\boldsymbol{A}%
\Join \boldsymbol{B}.$
\end{fact}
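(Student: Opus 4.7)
The plan is to treat Fact \ref{fa:MP} as a construction that assembles previously established results, and to check that each ingredient is in place. The statement packages three pieces: the definition of a matched pair; the assertion that the associated family $\widetilde{R}$ is a simple twisting system in $\boldsymbol{M^{\prime}}$; and the construction of $\boldsymbol{A}\Join \boldsymbol{B}$ as an enriched category over $\boldsymbol{M^{\prime}}$ via the twisted tensor product of Theorem \ref{thm:TTP}.

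First, starting from the matched pair data $(\boldsymbol{A},\boldsymbol{B},\triangleright,\triangleleft,|\cdots|)$, I would set ${}_{x}\widetilde{R}_{z}^{y} := ({}_{x}\triangleright_{z}^{y} \otimes {}_{x}\triangleleft_{z}^{y}) \circ \Delta_{{}_{x}B_{y}A_{z}}$ as prescribed by \eqref{mp1}, and verify that each ${}_{x}\widetilde{R}_{z}^{y}$ is a morphism in $\boldsymbol{M}$, i.e.\ a coalgebra morphism in $\boldsymbol{M^{\prime}}$. This is immediate from the converse direction of Lemma \ref{le:coalgebra}: the hypothesis that $\triangleright$ and $\triangleleft$ are families "as in \S\ref{coalg(M')}" means they consist of coalgebra morphisms satisfying the braided compatibility \eqref{mp2}, and the lemma then produces ${}_{x}\widetilde{R}_{z}^{y}$ as a coalgebra morphism with codomain the tensor coalgebra ${}_{x}A_{|xyz|} \otimes {}_{|xyz|}B_{z}$.

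Second, I would invoke Theorem \ref{thm:mp1}. The conditions (i)--(v) stipulated in the definition of matched pair are precisely the conditions that the theorem shows to be equivalent to $(\widetilde{R},|\cdots|)$ being a simple twisting system between $\boldsymbol{B}$ and $\boldsymbol{A}$, the latter now viewed as enriched categories over $\boldsymbol{M^{\prime}}$ (obtained by forgetting the coalgebra structure on hom-objects while retaining composition and identity maps, which, being coalgebra morphisms, are in particular $\boldsymbol{M^{\prime}}$-morphisms). The non-degeneracy clause in the definition of simple twisting system, namely that ${}_{x}A_{|xyz|}B_{z}$ is not initial whenever ${}_{x}B_{y}A_{z}$ is not, corresponds to condition (i) of the theorem, so it is also covered.

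Third, since $\boldsymbol{M^{\prime}}$ is assumed to be an $S$-distributive monoidal domain, Theorem \ref{thm:TTP} applies directly and delivers an $\boldsymbol{M^{\prime}}$-category $\boldsymbol{A}\otimes_{R}\boldsymbol{B}$ that factorizes through $\boldsymbol{A}$ and $\boldsymbol{B}$; declaring this object to be $\boldsymbol{A}\Join\boldsymbol{B}$ then completes the construction. There is no substantive obstacle; the one minor point that deserves care is that Theorem \ref{thm:TTP} is applied with ambient monoidal category $\boldsymbol{M^{\prime}}$ rather than $\boldsymbol{M}=\boldsymbol{Coalg(M^{\prime})}$, because $\boldsymbol{M}$ need not be a domain nor $S$-distributive in the relevant sense, while $\boldsymbol{A}$ and $\boldsymbol{B}$ nevertheless qualify as $\boldsymbol{M^{\prime}}$-categories once the coalgebra structure on their hom-objects is forgotten.
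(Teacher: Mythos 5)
Your proposal is correct and follows essentially the same route as the paper: the definition of $\widetilde{R}$ via \eqref{mp1} together with Lemma \ref{le:coalgebra}, the equivalence of conditions (i)--(v) with $(\widetilde{R},|\cdots|)$ being a simple twisting system via Theorem \ref{thm:mp1}, and the application of Theorem \ref{thm:TTP} over the ambient category $\boldsymbol{M^{\prime}}$ (which is where the $S$-distributive domain hypothesis lives) are exactly the ingredients the paper assembles. Your remark that the twisted tensor product is first constructed as an $\boldsymbol{M^{\prime}}$-category, with enrichment over $\boldsymbol{M}=\boldsymbol{Coalg(M^{\prime})}$ deferred, matches the paper, which addresses that upgrade only in the subsequent proposition.
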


\begin{proposition}
The bicrossed product of a matched pair $(\boldsymbol{A},\boldsymbol{B},{%
\triangleright },\triangleleft ,\left\vert \cdots \right\vert )$ is enriched
over the monoidal category $\boldsymbol{M}:=\boldsymbol{Coalg(M^{\prime })}.$
\end{proposition}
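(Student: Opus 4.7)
The plan is to exhibit the hom-objects of $\boldsymbol{A}\Join\boldsymbol{B}$ as coalgebras in $\boldsymbol{M}^{\prime }$, and then to show that both the composition maps and the identity maps defined in \S \ref{fa:ARB} are coalgebra morphisms, thereby upgrading the $\boldsymbol{M}^{\prime }$-enrichment to an $\boldsymbol{M}$-enrichment. The bulk of the work will consist in a universal-property argument reducing the compatibility of composition with comultiplication to the individual compatibility of the structural morphisms $\widetilde{R}$, $a$, $b$, $1^{A}$, $1^{B}$ and $\sigma$.

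First I would use that $\boldsymbol{M}=\boldsymbol{Coalg(M^{\prime })}$ is $S$-distributive (recorded in the preliminaries), so the coproducts appearing in $_{x}(\boldsymbol{A}\Join\boldsymbol{B})_{y}=\bigoplus_{u\in S}{}_{x}A_{u}\otimes {}_{u}B_{y}$ can be computed inside $\boldsymbol{M}$. Because the forgetful functor $\boldsymbol{Coalg(M^{\prime })}\to \boldsymbol{M^{\prime }}$ creates these coproducts (the comultiplication on the coproduct is uniquely determined by $\Delta \circ \sigma _{i}=(\sigma _{i}\otimes \sigma _{i})\circ \Delta _{X_{i}}$, which is well defined thanks to the distributivity of $\otimes $ over coproducts in $\boldsymbol{M}^{\prime }$), the canonical inclusions $_{x}\sigma _{y}^{u}$ are automatically coalgebra maps. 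Thus each hom-object $_{x}(\boldsymbol{A}\Join\boldsymbol{B})_{y}$ is canonically an object of $\boldsymbol{M}$.

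Next I would verify that $_{x}c_{z}^{y}:{}_{x}(\boldsymbol{A}\Join\boldsymbol{B})_{y}\otimes {}_{y}(\boldsymbol{A}\Join\boldsymbol{B})_{z}\to {}_{x}(\boldsymbol{A}\Join\boldsymbol{B})_{z}$ is a morphism in $\boldsymbol{M}$. By the universal property of the coproduct (applied in $\boldsymbol{M}$), this reduces to showing that each composite
\begin{equation*}
_{x}c_{z}^{y}\circ {}_{x}\sigma _{y}^{u}\sigma _{z}^{v}={}_{x}\sigma _{z}^{|uyv|}\circ {}_{x}a_{|uyv|}b_{z}\circ {}_{x}A_{u}\widetilde{R}_{v}^{y}B_{z}
\end{equation*}
is a coalgebra morphism. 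Here $_{x}a_{|uyv|}$, $_{u}b_{z}$, $1_{x}^{A}$, $1_{x}^{B}$ are coalgebra maps because $\boldsymbol{A}$ and $\boldsymbol{B}$ are $\boldsymbol{M}$-categories; $_{x}\widetilde{R}_{v}^{y}$ is a coalgebra map by the very definition of matched pair (see \S \ref{coalg(M')}, where $\widetilde{R}$ is reconstructed from $\triangleright ,\triangleleft $ via \eqref{mp1}, and Lemma \ref{le:coalgebra} guarantees that the result is a coalgebra morphism); and the inclusions $\sigma $ are coalgebra maps by the previous paragraph. Since in a braided monoidal category the tensor product of coalgebra morphisms is again a coalgebra morphism (for the tensor-product coalgebra structure defined using $\chi $), the whole composite lies in $\boldsymbol{M}$.

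Finally I would check that $1_{x}={}_{x}\sigma _{x}^{x}\circ (1_{x}^{A}\otimes 1_{x}^{B}):\mathbf{1}\to {}_{x}(\boldsymbol{A}\Join\boldsymbol{B})_{x}$ is a coalgebra morphism: $\mathbf{1}$ carries its trivial coalgebra structure, $1_{x}^{A}$ and $1_{x}^{B}$ are coalgebra maps by assumption, so is their tensor product, and composing with the coalgebra map $_{x}\sigma _{x}^{x}$ preserves this property. The main (and essentially only) obstacle is the bookkeeping needed to identify the coalgebra structure of $_{x}(\boldsymbol{A}\Join\boldsymbol{B})_{y}$ as a coproduct in $\boldsymbol{M}$ with that of a coproduct in $\boldsymbol{M}^{\prime }$; once this is done, all the remaining verifications reduce to the observation that every elementary piece entering the definitions of $_{x}c_{z}^{y}$ and $1_{x}$ is an arrow in $\boldsymbol{M}$.
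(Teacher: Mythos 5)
Your proposal is correct and follows essentially the same route as the paper's proof: both arguments first endow the coproduct $_{x}A_{\overline{u}}B_{y}$ with the unique coalgebra structure making the inclusions $_{x}\sigma_{y}^{u}$ coalgebra maps (and observe that this is again a coproduct in $\boldsymbol{Coalg(M^{\prime})}$), then reduce, via the universal property, the claim that $_{x}c_{z}^{y}$ and $1_{x}$ are morphisms of coalgebras to the fact that each building block $a$, $b$, $\widetilde{R}$, $1^{A}$, $1^{B}$, $\sigma$ already lives in $\boldsymbol{M}$.
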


\begin{proof}
Let $\{C_{i}\}_{i\in i}$ be a family of coalgebras in $\boldsymbol{{%
M^{\prime }.}}$ Let us assume that the underlying family of objects has a
coproduct $C:=\textstyle\bigoplus {}_{x\in S}C_{i\text{ }}$ in $\boldsymbol{%
M^{\prime }}.$ Let $\{\sigma _{i}\}_{i\in I}$ by the set of canonical
inclusions into $C.$ There are unique maps $\Delta :C\rightarrow C\otimes C$
and $\varepsilon :C\rightarrow \boldsymbol{1}$ such that
\begin{equation*}
\Delta \circ \sigma _{i}=(\sigma _{i}\otimes \sigma _{i})\circ \Delta
_{i}\quad \text{and\quad }\varepsilon \circ \sigma _{i}=\varepsilon _{i},
\end{equation*}%
for all $i\in I,$ where $\Delta _{i}$ and $\varepsilon _{i}$ are the
comultiplication and the counit of $C_{i}.$ It is easy to see that $%
(C,\Delta ,\varepsilon )$ is a coalgebra in $\boldsymbol{M^{\prime }}.$ Note
that, by the construction of the coalgebra structure on $C,$ the inclusion $%
\sigma _{i}$ is a coalgebra map, for any $i\in I.$ Furthermore, let $%
f_{i}:C_{i}\rightarrow D$ be a coalgebra morphism for every $i\in I.$ By the
universal property of the coproduct there is a unique map $f:C\rightarrow D$
in $\boldsymbol{{M^{\prime }}}$ such that $f\circ \sigma _{i}=f_{i},$ for
all $i\in I.$ It is not difficult to see that $f$ is a morphism of
coalgebras, so $(C,\{\sigma _{i}\}_{i\in I})$ is the coproduct of $%
\{C_{i}\}_{i\in I}$ in $\boldsymbol{M}$.

In particular, $_{x}A_{\overline{u}}B_{y}=\bigoplus_{u\in S}{}_{x}A_{%
\overline{u}}B_{y}$ has a unique coalgebra structure such that the inclusion
$_{x}\sigma _{z}^{u}:{}_{x}A_{u}B_{y}\rightarrow {}_{x}A_{\overline{u}}B_{y}$
is a coalgebra map, for all $x,$ $y$ and $u$ in $S$. Recall that for the
construction of the composition map $_{x}c_{z}^{y}:{}_{x}A_{\overline{u}%
}B_{y}A_{\overline{v}}B_{z}\rightarrow {}_{x}A_{\overline{w}}B_{z}$ one
applies the universal property of the coproduct to $\{f_{u,v}\}_{u,v\in
S^{2}},$ where%
\begin{equation*}
f_{u,v}={}_{x}\sigma _{z}^{\left\vert uyv\right\vert }\circ
{}_{x}a_{\left\vert uyv\right\vert }b_{z}\circ {}_{x}A_{u}\widetilde{R}%
_{v}^{y}B_{z}.
\end{equation*}%
Since $\boldsymbol{A}$ and $\boldsymbol{B}$ are $\boldsymbol{M}$-categories
and $_{u}\widetilde{R}_{v}^{y}$ is a coalgebra map, in view of the foregoing
remarks, it follows that $_{x}c_{z}^{y}$ is a morphism in $\boldsymbol{M},$
for all $x,y,z\in S.$ The identity map of $x$ in $\boldsymbol{A}\Join
\boldsymbol{B}$ is the coalgebra map $_{x}\sigma _{x}^{x}\circ
(1_{x}^{A}\otimes 1_{x}^{B}).$ In conclusion, $\boldsymbol{A}\Join
\boldsymbol{B}$ is enriched over $\boldsymbol{M}.$
\end{proof}

\section{Examples.}

In this section we give some examples of (simple) twisting systems. We start
by considering the case of $\boldsymbol{Set}$-categories, that is usual
categories.

\begin{fact}[Simple twisting systems of enriched categories over $%
\boldsymbol{Set.}$]
\label{fa:sts} The category $\boldsymbol{Set}$ is a braided monoidal
category with respect to the cartesian product, its unit object being $%
\{\emptyset \}$. Clearly, the empty set is the initial object in $%
\boldsymbol{Set},$ and this category is an $S$-distributive domain, for any
set $S.$ We have already noticed that the (\dag ) hypothesis holds in $%
\boldsymbol{Set}.$

Let $\boldsymbol{C}$ be an enriched category over $\boldsymbol{Set}.$ Thus,
by definition, $\boldsymbol{C}$ is a category in the usual sense, that is $%
_{x}{C}_{y}$ is a set for all $x,y\in S.$ An element in $_{x}{C}_{y}$ is
regarded as a function from $y$ to $x$.

It is easy to see that a given set $X$ can be seen in a unique way as a
coalgebra in $\boldsymbol{Set}$. As a matter of fact the comultiplication
and the counit of this coalgebra are given by the diagonal map $\Delta
:X\rightarrow X\times X$ and the constant map $\varepsilon :X\rightarrow
\{\emptyset \},$
\begin{equation*}
\Delta (x)=x\otimes x\text{\quad and\quad }\varepsilon (x)=\emptyset .
\end{equation*}%
Obviously, any function $f:X\rightarrow Y$ is morphism of coalgebras in $%
\boldsymbol{Set}$. Consequently, any category $\boldsymbol{C}$ may be seen
as an enriched category over $\boldsymbol{Coalg(Set)}$,

Our aim is to describe the simple twisting systems between two categories $%
\boldsymbol{B}$ and $\boldsymbol{A}$. In view of the foregoing discussion
and of our results in the previous section, for any simple twisting system $%
R:=\{_{x}\widetilde{R}_{z}^{y}\}_{x,y,z\in S}$ there is a unique matched
pair $(\boldsymbol{A},\boldsymbol{B},\triangleright ,{\triangleleft }%
,\left\vert \cdots \right\vert ),$ and conversely. These structures are
related each other by the formulae (\ref{lr}) and (\ref{mp1}).

Since $\boldsymbol{A}$ is an usual category, the composition of morphisms
will be denoted in the traditional way $g\circ g^{\prime },$ for any $g\in
{}_{x}A_{y}$ and $g^{\prime }\in {}_{y}A_{z}$ (recall that the domain and
the codomain of $g$ are $y$ and $x,$ respectively). The same notation will
be used for $\boldsymbol{B}.$ On the other hand, for any $f\in {}_{x}B_{y}$
and $g\in {}_{y}A_{z}$ we shall write%
\begin{equation*}
f\triangleright g:={}_{x}\triangleright {}_{z}^{y}(f,g)\quad \text{and}\quad
f\triangleleft g={}_{x}{\triangleleft }_{z}^{y}(f,g).
\end{equation*}%
Since the comultiplication in this case is always the diagonal map, and the
counit is the constant map to $\{\emptyset \}$, the conditions of Theorem %
\ref{thm:mp1} and the following ones are equivalent.

\begin{enumerate}
\item[(i)] If $_{x}B_{y}A_{z}$ is not empty then $_{x}A_{\left\vert
xyz\right\vert }B_{z}$ is not empty as well.

\item[(ii)] For any $(f,f^{\prime },g)\in {}_{x}B_{y}B_{z}A_{t}$ we have $%
|xy|yzt||=|xzt|$, and%
\begin{equation*}
(f\circ f^{\prime })\triangleright g=f\triangleright (f^{\prime
}\triangleright g)\qquad \text{and}\qquad (f\circ f^{\prime })\triangleleft
g=[f\triangleleft (f^{\prime }\triangleright g)]\circ (f^{\prime
}\triangleleft g).
\end{equation*}

\item[(iii)] For any $(f,g,g^{\prime })\in {}_{x}B_{y}A{}_{z}A_{t}$ we have $%
||xyz|zt|=|xyt|$, and%
\begin{equation*}
f\triangleleft (g\circ g^{\prime })=(f\triangleleft g)\triangleleft
g^{\prime }\qquad \text{and}\qquad f\triangleright (g\circ g^{\prime
})=(f\triangleright g)\circ \lbrack (f\triangleleft g)\triangleright
g^{\prime }].
\end{equation*}

\item[(iv)] For any $g\in {}_{x}A_{y}$ we have $|xxy|=y$, and%
\begin{equation*}
1_{x}^{B}\triangleright g=g\text{\qquad and\qquad}1_{x}^{B}\triangleleft
g=1_{y}^{B}.  \label{set5}
\end{equation*}

\item[(v)] For any $f\in {}_{x}B_{y}$ we have $|xyy|=x$, and%
\begin{equation*}
f\triangleright 1_{y}^{A}{}=1_{x}^{B}\text{\qquad and\qquad}f\triangleleft
1_{y}^{A}=f.  \label{set6}
\end{equation*}
\end{enumerate}

In this case the bicrossed product $\boldsymbol{A}\Join \boldsymbol{B}$ is
the category whose hom-sets are given by
\begin{equation*}
_{x}(\boldsymbol{A}\Join \boldsymbol{B})_{y}=\textstyle\coprod\limits_{u\in
S}{}_{x}{A}_{u}{B}_{y}.
\end{equation*}%
The identity of $x$ in $\boldsymbol{A}\Join \boldsymbol{B}$ is $%
(1_{x}^{A},1_{x}^{B}).$ For $(g,f)\in $ $_{x}{A}_{u}{B}_{y}$ and $(g^{\prime
},f^{\prime })\in {}_{y}{A}{}_{v}{B}_{z}$ we have%
\begin{equation*}
(g,f)\circ (g^{\prime },f^{\prime })=\left( g\circ (f\triangleright
g^{\prime }),(f\triangleleft g^{\prime })\circ f^{\prime }\right) .
\end{equation*}
\end{fact}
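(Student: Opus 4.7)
The plan is to verify each claim in this fact by specializing the general theory developed in the previous sections to the case $\boldsymbol{M'} = \boldsymbol{Set}$. First I would check that $\boldsymbol{Set}$ meets all of the hypotheses invoked: it is braided monoidal under the cartesian product (with the symmetry as braiding), the empty set is the initial object, and $\boldsymbol{Set}$ is an $S$-distributive domain for every $S$ (as already noted in the paper, since disjoint unions commute with products and $X\times Y = \emptyset$ iff $X=\emptyset$ or $Y=\emptyset$). I would then establish the claim that any set $X$ carries a unique coalgebra structure in $\boldsymbol{Set}$: since $\{\emptyset\}$ is the terminal object, the counit $\varepsilon\colon X\to\{\emptyset\}$ is forced; and since the terminal coalgebra condition $(\mathrm{Id}_X\otimes\varepsilon)\circ\Delta = \mathrm{Id}_X$ together with cocommutativity of the forced (diagonal) symmetry forces $\Delta$ to be the diagonal. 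It follows that every function is automatically a coalgebra morphism, so an arbitrary category is tautologically a $\boldsymbol{Coalg(Set)}$-category.

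Next I would translate the matched-pair conditions of Theorem~\ref{thm:mp1} to the element-level statements (i)--(v) of the fact. The key simplification is that, because $\Delta_{\,{}_xB_yA_z}$ is the diagonal, the defining equations (\ref{mp1}) and (\ref{mp2}) reduce to
\[
{}_x\widetilde{R}_z^y(f,g) = (f\triangleright g,\, f\triangleleft g),
\]
and the cocommutativity relation (\ref{mp2}) is automatic. Condition (i) of the fact restates condition (i) of the theorem verbatim. For (ii), I would substitute $(f,f',g)\in {}_xB_yB_zA_t$ into the two equations (\ref{lr1-1})--(\ref{lr1-2}): the left-hand side of (\ref{lr1-2}) becomes $(f\circ f')\triangleright g$, and unwinding the comultiplication and braiding in (\ref{lr1-1}) gives $[f\triangleleft(f'\triangleright g)]\circ(f'\triangleleft g)$. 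The analogous calculation for (\ref{lr2-1})--(\ref{lr2-2}) yields (iii); and setting $f = 1_x^B$ in (\ref{lr3-1})--(\ref{lr3-2}) and $g = 1_y^A$ in (\ref{lr4-1})--(\ref{lr4-2}) gives (iv) and (v). Conversely, since the element-level conditions recover $\triangleright,\triangleleft$ on all of $_xB_yA_z$, they imply the categorical equations.

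Finally, for the description of $\boldsymbol{A}\Join\boldsymbol{B}$, I would invoke the construction of \S\ref{fa:ARB}: the hom-sets are $\textstyle\coprod_{u\in S}{}_x A_u B_y$, the identity of $x$ is $_x\sigma_x^x(1_x^A,1_x^B)$, and the composition is determined by
\[
{}_x c_z^y \circ {}_x\sigma_y^u\sigma_z^v = {}_x\sigma_z^{|uyv|}\circ {}_x a_{|uyv|} b_z \circ {}_x A_u \widetilde{R}_v^y B_z.
\]
Evaluating on an element $(g,f,g',f')\in {}_xA_uB_yA_vB_z$ and substituting the explicit form ${}_y\widetilde{R}_v^y(f,g') = (f\triangleright g', f\triangleleft g')$ immediately yields
\[
(g,f)\circ(g',f') = \bigl(g\circ(f\triangleright g'),\,(f\triangleleft g')\circ f'\bigr),
\]
as claimed. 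The main (and really the only) obstacle is book-keeping: one must be careful to unwind the diagonal comultiplication and the cartesian braiding correctly when converting between the coalgebra-theoretic equations of Theorem~\ref{thm:mp1} and the element-wise statements, but since the comultiplication is the diagonal and the counit is constant, all such translations are mechanical.
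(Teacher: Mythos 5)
Your proposal is correct and follows essentially the same route as the paper: specialize Theorem \ref{thm:mp1}, Lemma \ref{le:lr} and the construction of \S\ref{fa:ARB} to $\boldsymbol{Set}$, where the unique coalgebra structure (diagonal comultiplication, constant counit) makes all the coalgebra-theoretic equations collapse to the element-wise identities (i)--(v) and to the stated composition rule for $\boldsymbol{A}\Join\boldsymbol{B}$. One trivial remark: the uniqueness of $\Delta$ does not need any appeal to cocommutativity --- the two counit axioms $({}X\times\varepsilon)\circ\Delta=X$ and $(\varepsilon\times X)\circ\Delta=X$ already force both components of $\Delta(x)$ to equal $x$.
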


\begin{remark}
R. Resebrugh and R.J. Wood showed that every twisting systems between two
$\boldsymbol{Set}$-categories $\boldsymbol{B}$ and $\boldsymbol{A}$ is completely determined by
a left action $\triangleright $ of $\boldsymbol{B}$ on $\boldsymbol{A}$ and
a right action $\triangleleft $ of $\boldsymbol{A}$ on $\boldsymbol{B.}$
More precisely, given a twisting system $R=\{_{x}R_{z}^{y}\}_{x,y,z\in S}$
and the morphisms $f\in {}_{x}B_{y}$ and $g\in {}_{y}A_{z},$ then $%
_{x}R_{z}^{y}(f,g)$ is an element in $_{x}A_{u}B_{z},$ where $u$ is a
certain element of $S.$ Hence, there are unique morphisms $f\triangleright
g\in {}_{x}A_{u}$ and $f\triangleleft g\in {}_{u}B_{z}$ such that
\begin{equation*}
_{x}R_{z}^{y}(g,f)=\left( f\triangleright g,f\triangleleft g\right) .
\end{equation*}%
The actions $\triangleright $ and
$\triangleleft $ must satisfy several compatibility conditions, which are similar
to those that appear in the above characterization of simple twisting systems.
For details the reader is referred to the second section of  \cite{RW}.
\end{remark}

\begin{fact}[The bicrossed product of two groupoids.]
\label{fa:gr} We now assume that $(\boldsymbol{A},\boldsymbol{B}%
,\triangleright ,{\triangleleft ,}|{\cdots }|)$ is a matched pair of
groupoids. Recall that a groupoid is a category whose morphisms are
invertible. We claim that $\boldsymbol{A}\Join \boldsymbol{B}$ is also a
groupoid. Indeed, as in the case of monoids, one can show that a category is
a groupoid if and only if every morphism is right invertible (or left
invertible). Since%
\begin{equation*}
_{x}(\boldsymbol{A}\Join \boldsymbol{B})_{y}=\textstyle\coprod\limits_{u\in
S}{_{x}{A}}_{u}{B}_{y},
\end{equation*}%
it is enough to prove that $(g,f)$ is right invertible, where $g\in
{}_{x}A_{u}$ and $f\in {}_{u}B_{y}$ are arbitrary morphisms. Therefore, we
are looking for a pair $(g^{\prime },f^{\prime })\in {}_{y}A_{v}\times $ $%
{}_{v}B_{x}$ such that
\begin{equation*}
g\circ (f\triangleright g^{\prime })=1_{x}^{A}\qquad \text{and}\qquad
(f\triangleleft g^{\prime })\circ f^{\prime }=1_{x}^{B}.
\end{equation*}%
Since $g$ is an invertible morphism in $_{x}A_{u}$ we get that $%
f\triangleright g^{\prime }=g^{-1}\in {}_{u}A_{x}.$ Since $f$ is invertible
too,
\begin{equation*}
g^{\prime }=1_{y}^{A}\triangleright g^{\prime }=(f^{-1}\circ
f)\triangleright g^{\prime }=f^{-1}\triangleright (f\triangleright g^{\prime
})=f^{-1}\triangleright g^{-1}.
\end{equation*}%
As $g^{\prime }\in {}_{y}A_{v}$ and $f^{-1}\triangleright g^{-1}\in
{}_{y}A_{|yux|}$ we must have $v=|yux|$. Thus we can now take%
\begin{equation*}
f^{\prime }=[f\triangleleft (f^{-1}\triangleright g^{-1})]^{-1}\in
{}_{|yux|}B_{x}.
\end{equation*}
\end{fact}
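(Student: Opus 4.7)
The plan is to reduce to showing that every morphism in $\boldsymbol{A} \Join \boldsymbol{B}$ admits a right inverse; the standard two-line argument (if $fg=1$ and $gh=1$, then $gf = (gf)(gh) = g(fg)h = gh = 1$) converts universal right-invertibility into two-sided invertibility, so this suffices. Given an arbitrary morphism $(g,f) \in {}_x A_u B_y$, with $g \in {}_xA_u$ and $f \in {}_uB_y$, the composition rule in the bicrossed product from \S \ref{fa:sts} reduces the task to producing $(g',f') \in {}_y A_v B_x$ (for some $v \in S$ to be determined) such that
\begin{equation*}
g \circ (f \triangleright g') = 1_x^A \qquad \text{and} \qquad (f \triangleleft g') \circ f' = 1_x^B.
\end{equation*}

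First I would solve for $g'$. Since $g$ is invertible in the groupoid $\boldsymbol{A}$, the first equation forces $f \triangleright g' = g^{-1}$. I then ``cancel $f$ on the left'' by invoking the compatibility of $\triangleright$ with composition in $\boldsymbol{B}$ together with the identity-action axiom, i.e.\ conditions (ii) and (iv) of Theorem \ref{thm:mp1} in their $\boldsymbol{Set}$-translation:
\begin{equation*}
g' \;=\; 1_y^B \triangleright g' \;=\; (f^{-1} \circ f) \triangleright g' \;=\; f^{-1} \triangleright (f \triangleright g') \;=\; f^{-1} \triangleright g^{-1}.
\end{equation*}
A short computation with the index function, using $|xxy|=y$ and $|xyy|=x$ from the matched-pair hypothesis, locates this element in ${}_y A_{|yux|}$, so $v=|yux|$ is forced. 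With $g'$ in hand, the second equation is solved by setting $f' := (f \triangleleft g')^{-1}$, which exists because $\boldsymbol{B}$ is a groupoid; the same index-arithmetic places $f \triangleleft g'$ in ${}_x B_{|yux|}$ and hence $f' \in {}_{|yux|} B_x$, as required.

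Finally I would verify that $(g',f')$ really is a right inverse. The $B$-component composes to $1_x^B$ by construction of $f'$, and the $A$-component follows from one more application of the $\triangleright$-compatibility axiom, which gives $f \triangleright g' = f \triangleright (f^{-1} \triangleright g^{-1}) = (f \circ f^{-1}) \triangleright g^{-1} = 1_u^B \triangleright g^{-1} = g^{-1}$, so that $g \circ (f \triangleright g') = g \circ g^{-1} = 1_x^A$. The main obstacle is not conceptual but bookkeeping: one must track how the index function $|\cdots|$ propagates through each axiom so as to certify that the inverse is built inside the correct hom-object ${}_y A_v B_x$. Once the candidate $v = |yux|$ is identified, however, this reduces in each case to a direct application of the matched-pair relations in Theorem \ref{thm:mp1}.
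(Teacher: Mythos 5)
Your proposal is correct and follows essentially the same route as the paper: reduce to right-invertibility, force $f\triangleright g'=g^{-1}$, recover $g'=f^{-1}\triangleright g^{-1}$ via the action axioms, read off $v=|yux|$, and set $f'=(f\triangleleft g')^{-1}$. The only differences are cosmetic improvements on your part — you explicitly verify that the candidate pair really is a right inverse (the paper only derives what it must be), and you correctly write $1_y^{B}\triangleright g'$ where the paper has a typo $1_y^{A}$.
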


\begin{fact}[The smash product category.]
We take $\boldsymbol{M}$ to be the monoidal category $\mathbb{K}$-$%
\boldsymbol{Mod},$ where $\mathbb{K}$ is a commutative ring. Hence in this
case we work with $\mathbb{K}$-linear categories. Let $H$ be a $\mathbb{K}$%
-bialgebra. We define an enriched category $\boldsymbol{H}$ over $\mathbb{K}$%
-$\boldsymbol{Mod}$ by setting $_{x}H_{x}=H$ and $_{x}H_{y}=0,$ for $x\neq
y. $ The composition of morphisms in $\boldsymbol{H}$ is given by the
multiplication in $H$ and the identity of $x$ is the unit of $H$. For the
comultiplication of $H$ we shall use the $\Sigma $-notation%
\begin{equation*}
\Delta (h)=\textstyle\sum {}h_{(1)}\otimes h_{(2)}.
\end{equation*}%
Let $\boldsymbol{A}$ denote an $H$-module category, i.e. a category enriched
in $H$-$\boldsymbol{Mod}$. Thus $H$ acts on $_{x}A_{y}$, for any $x,y\in S$,
and the composition and the identity maps in $\boldsymbol{A}$ are $H$-linear
morphisms. Obviously, $\boldsymbol{A}$ is a $\mathbb{K}$-linear category.
Our aim is to associate to $\boldsymbol{A}$ a simple twisting system ${R}%
=\{_{x}\widetilde{R}_{z}^{y}\}_{x,y,z\in S}$. First we define $|\cdots
|:S^{3}\rightarrow S$ by $|xyz|=z.$ Then, using the actions $\cdot :H\otimes
{}_{x}A_{z}\rightarrow {}_{x}A_{z\text{,}}$ we define
\begin{equation*}
_{x}\widetilde{R}_{z}^{x}:H\otimes {}_{x}A_{z}\rightarrow {}_{x}A_{z}\otimes
H,\quad _{x}\widetilde{R}_{z}^{x}(h\otimes f)=\textstyle\sum {}h_{(1)}\cdot
f\otimes h_{(2)}.
\end{equation*}%
For $x\neq y$ we take $_{x}\widetilde{R}_{z}^{y}=0$. It is easy to see that $%
{R}$ is a simple twisting system of $\mathbb{K}$-linear categories. Clearly $%
\mathbb{K}$-$\boldsymbol{Mod}$ is $S$-distributive, for any set $S.$ If $%
\mathbb{K}$ is a field then $\mathbb{K}$-$\boldsymbol{Mod}$ is a domain, so
in this case the twisted tensor product of $\boldsymbol{A}$ and $\boldsymbol{%
H}$ with respect to $R$ makes sense, cf. Theorem \ref{thm:TTP}. It is called
the smash product of $\boldsymbol{A}$ by $H$, and it is denoted by $%
\boldsymbol{A}\#H.$ By definition, $_{x}(\boldsymbol{A}\#H)_{y}={}_{x}A_{y}%
\otimes H$ and
\begin{equation}
(f\otimes h)\circ (f^{\prime }\otimes h^{\prime })=\textstyle\sum {}f\circ
(h_{(1)}\mathbf{\cdot }\text{ }f^{\prime })\otimes h_{(2)}h^{\prime },
\label{smash}
\end{equation}%
for any $f\in {}_{x}A_{y}$, $f^{\prime }\in {}_{y}A_{z}$ and $h,h^{\prime
}\in H.$
\end{fact}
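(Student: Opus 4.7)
The plan is to verify that the pair $(\widetilde{R},|\cdots|)$ defined in the fact is a simple twisting system between $\boldsymbol{H}$ and $\boldsymbol{A}$, and then to read off the composition formula from the construction of $\boldsymbol{A}\otimes_{R}\boldsymbol{H}$ in \S\ref{fa:ARB}. Since $\mathbb{K}$-$\boldsymbol{Mod}$ is $S$-distributive and $\mathbb{K}$-linear, I would invoke Corollary \ref{cor R simplu2} to reduce the task to checking the four conditions (i)--(iv) stated there, rather than Corollary \ref{cor: R simplu1}, whose extra hypothesis $(\dagger)$ fails for linear categories.

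First I would compute the combinatorial side conditions using $|xyz|=z$: one has $|xy|yzt||=t=|xzt|$, $||xyz|zt|=t=|xyt|$, and $|xxy|=y$, so the $|\cdots|$-conditions in (i)--(iii) of Corollary \ref{cor R simplu2} are automatic; on the other hand $|xyy|=y$ coincides with $x$ only when $x=y$. For (iv) with $x\neq y$ the source $_{x}H_{y}=0$, so both sides of \eqref{cor1-4} vanish trivially; in the case $x=y$ I would verify \eqref{cor1-4} by hand, showing that $_{x}\widetilde{R}_{x}^{x}$ sends $h\otimes 1_{x}^{A}$ to $\sum h_{(1)}\cdot 1_{x}^{A}\otimes h_{(2)}=\sum \varepsilon(h_{(1)})1_{x}^{A}\otimes h_{(2)} = 1_{x}^{A}\otimes h$, using that the unit $1_{x}^{A}:\mathbb{K}\to {}_{x}A_{x}$ is $H$-linear (so $h\cdot 1_{x}^{A}=\varepsilon(h)1_{x}^{A}$) together with the counit axiom.

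Next I would verify the relations \eqref{cor1-1}--\eqref{cor1-3}. Since $_{x}H_{y}=0$ for $x\neq y$, each identity collapses to a single diagonal instance. For \eqref{cor1-1} one has $x=y=z$ and the equation reduces, after using $\Delta(hh')=\Delta(h)\Delta(h')$, to the module axiom $(h_{(1)}h'_{(1)})\cdot f=h_{(1)}\cdot(h'_{(1)}\cdot f)$ in $_{x}A_{t}$. For \eqref{cor1-2} one has $x=y$ and, after applying coassociativity of $\Delta$, the equation becomes $\sum h_{(1)}\cdot(g\circ g')\otimes h_{(2)}=\sum (h_{(1)}\cdot g)\circ(h_{(2)}\cdot g')\otimes h_{(3)}$, which is precisely the $H$-linearity of composition expressing that $\boldsymbol{A}$ is an $H$-module category. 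Finally \eqref{cor1-3} follows immediately from $\Delta(1_{H})=1_{H}\otimes 1_{H}$ and $1_{H}\cdot g=g$. The main (minor) obstacle is bookkeeping the indices to confirm in each case that the surviving summands indeed correspond to the diagonal case analyzed; on top of this, one should also check the technical requirement in \S\ref{STS} that $_{x}A_{|xyz|}B_{z}$ be non-initial whenever $_{x}B_{y}A_{z}$ is, which here reduces to $x=y$ forcing $_{x}A_{z}\otimes H\neq 0$ whenever $H\otimes {}_{x}A_{z}\neq 0$.

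Once the simple twisting system is confirmed, Theorem \ref{thm:TTP} (available when $\mathbb{K}$ is a field, so $\mathbb{K}$-$\boldsymbol{Mod}$ is a domain) yields the $\mathbb{K}$-linear category $\boldsymbol{A}\otimes_{R}\boldsymbol{H}$ factorizing through $\boldsymbol{A}$ and $\boldsymbol{H}$. Its hom-object $_{x}(\boldsymbol{A}\otimes_{R}\boldsymbol{H})_{y}=\bigoplus_{u}{}_{x}A_{u}\otimes {}_{u}H_{y}$ reduces to $_{x}A_{y}\otimes H$ because $_{u}H_{y}$ vanishes unless $u=y$. Substituting $|xyz|=z$ and the explicit $\widetilde{R}$ in the composition rule $_{x}c_{z}^{y}\circ {}_{x}\sigma_{y}^{u}\sigma_{z}^{v}={}_{x}\sigma_{z}^{|uyv|}\circ {}_{x}a_{|uyv|}b_{z}\circ {}_{x}A_{u}\widetilde{R}_{v}^{y}B_{z}$ from \S\ref{fa:ARB}, and evaluating on $(f\otimes h)\otimes(f'\otimes h')$, produces $\sum f\circ (h_{(1)}\cdot f')\otimes h_{(2)}h'$, which is exactly \eqref{smash}.
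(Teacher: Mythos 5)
Your verification is correct and is precisely the routine check that the paper leaves to the reader (it only asserts ``it is easy to see that $R$ is a simple twisting system''): the $|\cdots|$-identities for $|xyz|=z$, the reduction of \eqref{cor1-1}--\eqref{cor1-4} to the diagonal cases $x=y$ (where they become the module axiom, the $H$-linearity of the composition and of the units, and the counit axiom), the non-initiality requirement of \S\ref{STS}, and the evaluation of the composition rule of \S\ref{fa:ARB} on $(f\otimes h)\otimes(f'\otimes h')$ all come out exactly as you compute them. One minor remark: the hypothesis $(\dagger)$ is only needed for the \emph{converse} implication in Corollary \ref{cor: R simplu1}, whose conditions (i)--(iv) are what literally define a simple twisting system in \S\ref{STS}, so your detour through Corollary \ref{cor R simplu2} is unnecessary though harmless here, since the $|\cdots|$-identities hold whenever the relevant source objects are nonzero and both corollaries' conditions coincide in this example.
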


\begin{fact}[The semidirect product.]
\label{sdp} Let $\boldsymbol{A}$ be a category. Let us suppose that $({B}%
,\cdot ,{1})$ is a monoid that acts to the left on each $_{x}A_{y}$ via
\begin{equation*}
\triangleright :B\times {}_{x}A_{y}\rightarrow {}_{x}A_{y}.
\end{equation*}%
We define the category $\boldsymbol{B}$ so that $_{x}B_{x}=B$ and ${}_{x}{B}%
_{y}=\emptyset $, for $x\neq y$. The composition of morphisms is given by
the multiplication in $\boldsymbol{B}.$ To this data we associate a matched
pair $(\boldsymbol{A},\boldsymbol{B},|{\cdots }|,\triangleright
,\triangleleft ),$ setting $f\triangleleft g=f$ for any $(f,g)\in
{}_{x}B_{x}A_{y},$ and defining the function $\left\vert \cdots \right\vert
:S^{3}\rightarrow S$ by $|xyz|=z.$ Note that if $x\neq y$ then $%
_{x}B{}_{y}A_{z}$ is empty, so $_{x}\triangleright _{z}^{y}$ and $_{x}{%
\triangleleft }_{z}^{y}$ coincide with the empty function. One shows that $(%
\boldsymbol{A},\boldsymbol{B},\triangleright ,\triangleleft ,|{\cdots }|)$
is a matched pair if and only if for any $(g,g^{\prime })\in
{}_{x}A_{y}A_{z} $ and $f\in B$%
\begin{align*}
f\triangleright (g\circ g^{\prime })& =(f\triangleright g)\circ
(f\triangleright g^{\prime }), \\
f\triangleright 1_{x}^{A}& =1_{x}^{A}.
\end{align*}%
The corresponding bicrossed product will be denoted in this case by $%
\boldsymbol{A\rtimes }B$. If $\left\vert S\right\vert =1$ then $\boldsymbol{A%
}$ can be identified with a monoid and $\boldsymbol{A}\rtimes B$ is the
usual semidirect product of two monoids. For this reason we shall call $%
\boldsymbol{A}\rtimes B$ the semidirect product of $\boldsymbol{A}$ with $B.$
Note that $_{x}(\boldsymbol{A}\rtimes B)_{y}={}_{x}A_{y}\times {}B.$ For any
$f,f^{\prime }\in B$ and $(g,g^{\prime })\in {}_{x}A_{y}A_{z}$, the
composition of morphisms in $\boldsymbol{A}\rtimes B$ is given by
\begin{equation*}
(g,f)\circ (g^{\prime },f^{\prime })=(g\circ (f\triangleright g^{\prime
}),f\circ f^{\prime }).
\end{equation*}
\end{fact}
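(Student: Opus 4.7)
The plan is to verify the five defining conditions (i)--(v) of a matched pair from Theorem~\ref{thm:mp1}, specialized to $\boldsymbol{Set}$ as in \S\ref{fa:sts}, directly against the given data. Because $_{x}B_{y}=\emptyset$ whenever $x\neq y$ and $\triangleleft$ is just projection onto the first coordinate, most of these conditions will either be vacuous or collapse to the hypothesis that $B$ acts on each $_{x}A_{y}$. The upshot should be that the only non-trivial residue consists of the two displayed compatibilities.

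First I would dispose of the easy conditions. For (i), if $_{x}B_{y}A_{z}\neq\emptyset$ then necessarily $x=y$, and then $_{x}A_{|xxz|}B_{z}={}_{x}A_{z}\times B$ is non-empty since $B$ is a monoid. Condition (ii) is non-vacuous only when $x=y=z$; its $\triangleleft$-equation collapses to $f\cdot f'=f\cdot f'$, while its $\triangleright$-equation becomes $(f\cdot f')\triangleright g=f\triangleright (f'\triangleright g)$, an action axiom. Condition (iv) similarly follows from $1\triangleright g=g$ and $1\triangleleft g=1$.

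The remaining conditions (iii) and (v) carry the content. Condition (iii) is non-vacuous only when $x=y$; its $\triangleleft$-equation collapses to $f=f$, and its $\triangleright$-equation becomes $f\triangleright(g\circ g')=(f\triangleright g)\circ[(f\triangleleft g)\triangleright g']=(f\triangleright g)\circ(f\triangleright g')$, precisely the first listed compatibility. Condition (v), again restricted to $x=y$, has $\triangleleft$-equation $f=f$ and $\triangleright$-equation $f\triangleright 1_{y}^{A}=1_{x}^{A}$, the second listed compatibility. Reading these reductions in reverse, the two displayed identities together with the monoid action axioms imply every one of (i)--(v), so the equivalence follows. The only real obstacle is the case-by-case bookkeeping needed to keep straight which clauses of (i)--(v) vanish for empty hom-sets, which clauses are forced by the monoid action, and which two clauses actually supply the content of the claim.
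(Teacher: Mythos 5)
Your verification is correct and is exactly the argument the paper leaves implicit behind ``one shows that'': specializing conditions (i)--(v) of \S\ref{fa:sts} to the case $_{x}B_{y}=\emptyset$ for $x\neq y$, $|xyz|=z$ and $\triangleleft$ the first projection, so that (i), (ii), (iv) reduce to the monoid-action hypotheses and (iii), (v) reduce to the two displayed identities. The case bookkeeping you describe is the whole content, and your reductions all check out.
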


\begin{fact}[Twisting systems between algebras in $\boldsymbol{M}.$]
We now consider a twisting system ${R}$ between two $\boldsymbol{M}$%
-categories $\boldsymbol{B}$ and $\boldsymbol{A}$ with the property that ${S}%
=\{x_{0}\}$. Obviously, $\boldsymbol{M}$ is $S$-distributive. We shall use
the notation $A={}_{x_{0}}A_{x_{0}}$ and $B={}_{x_{0}}B_{x_{0}}.$ The
composition map $a:={}_{x_{0}}a_{x_{0}}^{x_{0}}$ and $1_{A}:=1_{x_{0}}^{A}$
define an algebra structure on $A$. A similar notation will be used for the
algebra corresponding to the $\boldsymbol{M}$-category $\boldsymbol{B}.$ Let
$R$ be a morphism from $B\otimes A$ to $A\otimes B.$

Since $_{x_{0}}\sigma _{x_{0}}^{x_{0}}$ is the identity map of $B\otimes A,$
by Proposition \ref{thm R simplu}, we deduce that $%
{}_{x_{0}}R_{x_{0}}^{x_{0}}=R$ defines a twisting system between $%
\boldsymbol{B}$ and $\boldsymbol{A}$ if and only if $R$ satisfies the
relations (\ref{cor1-1})-(\ref{cor1-4}) with respect to the unique map $%
\left\vert \cdots \right\vert :S^{3}\rightarrow S.$ In turn, they are
equivalent to the following identities%
\begin{align}
R\circ (b\otimes A)& =(A\otimes b)\circ (R\otimes B)\circ (B\otimes R),
\label{fm1} \\
R\circ (B\otimes a)& =(a\otimes B)\circ (A\otimes R)\circ (R\otimes A),
\label{fm2} \\
R\circ (1_{B}^{{}}\otimes A)& =A\otimes 1_{B}^{{}},  \label{fm3} \\
R\circ (B\otimes 1_{A})& =1_{A}\otimes B.  \label{fm4}
\end{align}%
In conclusion, in the case when $\left\vert S\right\vert =1,$ to give a
twisting system between $\boldsymbol{B}$ and $\boldsymbol{A}$ is equivalent
to give a \emph{twisting map} between the algebras $B$ and $A,$ that is a
morphism $R$ which satisfies (\ref{fm1})-(\ref{fm4}).

By applying Corollary \ref{cor: TTP} to a twisting map $R:B\otimes
A\rightarrow A\otimes B$ (viewed as a twisting system between two $%
\boldsymbol{M}$-categories with one object) we get the \emph{twisted tensor
algebra }$A\otimes _{R}B.$ The unit of this algebra is $1_{A}\otimes 1_{B}$
and its multiplication is given by
\begin{equation*}
m=(a\otimes b)\circ (A\otimes R\otimes B).
\end{equation*}%
Note that, in view of the foregoing remarks, an algebra $C$ in $\boldsymbol{M%
}$ factorizes through $A$ and $B$ if and only if it is isomorphic to a
twisted tensor algebra $A\otimes _{R}B,$ for a certain twisting map $R.$

An algebra in the monoidal category $\mathbb{K}$-$\boldsymbol{Mod}$ is by
definition an associative and unital $\mathbb{K}$-algebra. Twisted tensor $%
\mathbb{K}$-algebras were investigated for instance in \cite{Ma1}, \cite{Tam}%
, \cite{CSV}, \cite{CIMZ}, \cite{LPvO} and \cite{JLPvO}.

Coalgebras over a field $\mathbb{K}$ are algebras in the monoidal category $(%
\mathbb{K}$-$\boldsymbol{Mod})^{o}.$ Hence a twisting map between two
coalgebras $(A,\Delta _{A},\varepsilon _{A})$ and $(B,\Delta
_{B},\varepsilon _{B})$ is a $\mathbb{K}$-linear map
\begin{equation*}
R:A\otimes _{\mathbb{K}}B\rightarrow B\otimes _{\mathbb{K}}A
\end{equation*}%
which satisfies the equations that are obtained from \eqref{fm1}-\eqref{fm4}
by making the substitutions $a:=\Delta _{A},$ $b:=\Delta _{B},$ $%
1_{A}:=\varepsilon _{A}$ and $1_{B}:=\varepsilon _{B},$ and reversing the
order of the factors with respect to the composition in $\boldsymbol{M}$.
For example \eqref{fm1} should be replaced with%
\begin{equation*}
(\Delta _{B}\otimes _{\mathbb{K}}A)\circ R=(B\otimes _{\mathbb{K}}R)\circ
(R\otimes _{\mathbb{K}}B)\circ (A\otimes _{\mathbb{K}}\Delta _{B}).
\end{equation*}%
Obviously $A\otimes _{R}B$ is the $\mathbb{K}$-coalgebra $(A\otimes _{%
\mathbb{K}}B,\Delta ,\varepsilon )$, where $\varepsilon :=\varepsilon
_{A}\otimes $ $\varepsilon _{B}$ and%
\begin{equation*}
\Delta =(A\otimes R\otimes B)\circ (\Delta _{A}\otimes _{\mathbb{K}}\Delta
_{B}).
\end{equation*}

An algebra in $\Lambda $-$\boldsymbol{Mod}$-$\Lambda $ is called a $\Lambda $%
-\emph{ring}. Specializing $\boldsymbol{M}$ to $\Lambda $-$\boldsymbol{Mod}$-%
$\Lambda $ we find the definition of the \emph{twisted tensor }$\Lambda $%
\emph{-ring}. Dually, $\Lambda $-corings are algebras in $(\Lambda $-$%
\boldsymbol{Mod}$-$\Lambda )^{o}$. Thus in this particular case we are led
to the construction of the \emph{twisted tensor} $\Lambda $-\emph{coring}.

By definition a monad on a category $\boldsymbol{A}$ is an algebra in $[%
\boldsymbol{A},\boldsymbol{A}]$. If $(F,\mu _{F},\iota _{F})$ and $(G,\mu
_{G},\iota _{G})$ are monads in $\boldsymbol{M}$, then a natural
transformation%
\begin{equation*}
\lambda :G\circ F\rightarrow F\circ G
\end{equation*}%
satisfies the relations \eqref{fm1}-\eqref{fm4} if and only if $\lambda $ is
a \emph{distributive law}, cf. \cite{Be}. Let $F^{2}:=F\circ F.$ For every
distributive law $\lambda $ we get a monad $(F\circ G,\mu ,\iota ),$ where $%
\iota :=\iota _{F}G\circ \iota _{G}=F\iota _{G}\circ \iota _{F}$ and
\begin{equation*}
\mu :=\mu _{F}G\circ F^{2}\mu _{G}\circ F\lambda G=F\mu _{G}\circ \mu
_{F}G^{2}\circ F\lambda G.
\end{equation*}%
Distributive laws between comonads can be defined similarly, or working in $[%
\boldsymbol{A},\boldsymbol{A}]^{o}$.

Finally, twisting maps in $\boldsymbol{Opmon(\boldsymbol{M})}$ have been
considered in \cite{BV}. In loc. cit. the authors define a bimonad in $%
\boldsymbol{M}$ as an algebra in $\boldsymbol{Opmon(\boldsymbol{M}})$. Hence
a twisting map between two bimonads is an opmonoidal distributive law
between the underlying monads. For any opmonoidal distributive law $\lambda $
between the bimonads $G$ and $F$, there is a canonical bimonad structure on
the endofunctor $F\circ G$. See \cite[Section 4]{BV} for details.
\end{fact}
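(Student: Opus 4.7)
My plan is to unpack the definition of a twisting system between $\boldsymbol{B}$ and $\boldsymbol{A}$ in the degenerate case $S=\{x_0\}$, and check it matches the four equations (\ref{fm1})--(\ref{fm4}). The key observation is that when $|S|=1$ every coproduct $\textstyle\bigoplus_{u\in S}{}_{x_0}A_u B_{x_0}$ collapses to the single summand $A\otimes B$, and the canonical inclusion $_{x_0}\sigma^{x_0}_{x_0}$ is the identity morphism of $A\otimes B$. Consequently, whatever the component $_{x_0}R^{x_0}_{x_0}$ happens to be, it automatically factors through an inclusion into a one-term coproduct, so the family $R:=\{{}_{x_0}R^{x_0}_{x_0}\}$ is trivially simple with respect to the unique function $|\cdots|\colon S^3\to S$; moreover $_{x_0}\widetilde{R}^{x_0}_{x_0}$ coincides with $_{x_0}R^{x_0}_{x_0}$, which I identify with the given morphism $R\colon B\otimes A\to A\otimes B$.

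Next I would invoke Proposition \ref{thm R simplu}, which characterizes those twisting systems arising in this way through the four identities (\ref{Rs-1})--(\ref{Rs-4}). Under the collapse $S=\{x_0\}$, every subscript and superscript in these identities is forced to be $x_0$, every $\overline{u}$ and $\overline{v}$ is also $x_0$, and every inclusion $\sigma$ is again an identity. Reading off (\ref{Rs-1})--(\ref{Rs-4}) with these substitutions, and using the abbreviations $a={}_{x_0}a^{x_0}_{x_0}$, $b={}_{x_0}b^{x_0}_{x_0}$, $1_A=1^A_{x_0}$, $1_B=1^B_{x_0}$, I obtain exactly (\ref{fm1})--(\ref{fm4}) in that order. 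This gives one direction: any twisting system in the degenerate case produces a twisting map in the algebra sense.

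For the converse, given any morphism $R\colon B\otimes A\to A\otimes B$ satisfying (\ref{fm1})--(\ref{fm4}), I would define $_{x_0}R^{x_0}_{x_0}:=R$ and check that the four diagrams of Figure \ref{fig:R_1} commute. Because all inclusions are identities here, reading Proposition \ref{thm R simplu} backwards shows that (\ref{fm1})--(\ref{fm4}) are precisely (\ref{Rs-1})--(\ref{Rs-4}) in this setting, so the four diagrams commute by that proposition. Hence $R$ gives a twisting system.

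I do not expect any real obstacle: the whole argument is a bookkeeping reduction, turning on the single observation that for $|S|=1$ the structural morphisms $_{x_0}\sigma^{x_0}_{x_0}$ are identities, so Proposition \ref{thm R simplu} becomes a bijective translation of the twisting system axioms into (\ref{fm1})--(\ref{fm4}). The only subtlety worth spelling out is that $A$ and $B$ really inherit associative, unital algebra structures from the enriched category axioms for $\boldsymbol{A}$ and $\boldsymbol{B}$ with one object, which is immediate from Figure~\ref{ec: M-categorie}.
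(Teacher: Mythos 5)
Your argument is correct and follows the paper's own route: both proofs rest on the observation that for $|S|=1$ the coproduct ${}_{x_0}A_{\overline{u}}B_{x_0}$ collapses to $A\otimes B$ with ${}_{x_0}\sigma^{x_0}_{x_0}$ the identity, so every morphism $R\colon B\otimes A\to A\otimes B$ automatically has the factored form (\ref{R simplu}) with $\widetilde{R}=R$, and Proposition \ref{thm R simplu} then translates the twisting-system axioms directly into (\ref{fm1})--(\ref{fm4}). The reduction of (\ref{Rs-1})--(\ref{Rs-4}) to these four identities, including the order in which they appear, matches the paper exactly.
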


\begin{fact}[Matched pairs of algebras in $\boldsymbol{Coalg({M^{\prime }}){.%
}}$]
Let $\boldsymbol{M}$ denote the category of coalgebras in a braided monoidal
category $(\boldsymbol{{M^{\prime },}}\otimes ,\mathbf{1},\chi ).$ By
definition, a bialgebra in $\boldsymbol{M^{\prime }}$ is an algebra in $%
\boldsymbol{M}$. We fix two bialgebras $(A,a,1_{A},\Delta_A,\varepsilon_A)$
and $(A,b,1_{B},\Delta_B,\varepsilon_B)$ in $\boldsymbol{M^{\prime }}$ and
take $R:B\otimes A\rightarrow A\otimes B $ to be a morphism in $\boldsymbol{M%
}.$ By Lemma \ref{le:coalgebra}, there are the coalgebra maps $%
\triangleright :B\otimes A\rightarrow A$ and ${\triangleleft }:B\otimes
A\rightarrow B$ such that
\begin{equation}
R=(\triangleright \otimes {\triangleleft })\circ \Delta _{B\otimes A}\qquad
\text{and\qquad }\chi _{A,B}\circ (\triangleright \otimes {\triangleleft }%
)\circ \Delta _{B\otimes A}=({\triangleleft }\otimes \triangleright )\circ
\Delta _{B\otimes A}.  \label{bi1}
\end{equation}%
We have seen that $R$ is a twisting map in $\boldsymbol{M}$ if and only if
it satisfies the relations (\ref{fm1})-(\ref{fm4}). In view of Lemma \ref%
{le:lr}, these equations are equivalent to the fact that $(A,\triangleright
) $ is a left $B$-module and $(B,{\triangleleft })$ is a right $A$-module
such that the following identities hold:%
\begin{align}
{\triangleleft }\circ (b\otimes A)& =b\circ ({\triangleleft }\otimes B)\circ
(B\otimes \triangleright \otimes {\triangleleft })\circ (B\otimes \Delta
_{B\otimes A}),  \label{bi2} \\
\triangleright \circ (B\otimes a)& =a\circ (A\otimes \triangleright )\circ
(\triangleright \otimes {\triangleleft }\otimes A)\circ (\Delta _{B\otimes
A}\otimes A),  \label{bi3} \\
{\triangleleft }\circ (1_{B}\otimes A)& =\varepsilon _{A}\otimes 1_{B},
\label{bi4} \\
\triangleright \circ (B\otimes 1_{A})& =1_{A}\otimes \varepsilon _{B}.
\label{bi5}
\end{align}%
By definition, a \emph{matched pair of bialgebras} \emph{in} $\boldsymbol{{%
M^{\prime }}}$ consists of a left $B$-action $(A,\triangleright )$ and a
right $A$-action $(B,{\triangleleft })$ in $\boldsymbol{M}$ such that the
second equation in (\ref{bi1}) and the relations (\ref{bi2})-(\ref{bi5})
hold. For a matched pair of bialgebras we shall use the notation $%
(A,B,\triangleright ,{\triangleleft ).}$

Summarizing, there is an one to-one-correspondence between twisting maps of
bialgebras in $\boldsymbol{{M^{\prime }}}$ and matched pairs of bialgebras
in $\boldsymbol{{M^{\prime }}}.$ If $(A,B,\triangleright ,{\triangleleft )}$
is a matched pair of bialgebras and $R$ is the corresponding twisting map,
then $A\otimes _{R}B$ will be called the \emph{bicrossed product of the
bialgebras }$A$ and $B,$ and it will be denoted by $A\Join B.$ Note that $%
A\Join B$ is an algebra in $\boldsymbol{M}.$ Thus the bicrossed product of $%
A $ and $B$ is a bialgebra in $\boldsymbol{{M^{\prime }}}.$ The unit of this
bialgebra is $1_{A}\otimes 1_{B}$ and the multiplication is given by%
\begin{equation*}
m=(a\otimes b)\circ (A\otimes \triangleright \otimes {\triangleleft }\otimes
B)\circ (A\otimes \Delta _{B\otimes A}\otimes B).
\end{equation*}%
As a coalgebra $A\Join B$ is the tensor product coalgebra of $A$ and $B.$ We
also conclude that a bialgebra $C$ in $\boldsymbol{{M^{\prime }}}$
factorizes through the sub-bialgebras $A$ and $B$ if and only if $C\cong
A\Join B$.

As a first application, let us take $\boldsymbol{M^{\prime }}$ to be the
category of sets, which is braided with respect to the braiding given by $%
(X,Y)\mapsto (Y,X)$ and $(f,g)\mapsto (g,f)$, for any sets $X,Y$ and any
functions $f,g.$ We have already noticed that there is a unique coalgebra
structure on a given set $X$%
\begin{equation*}
\Delta (x)=(x,x),\quad \quad \varepsilon (x)=\emptyset ,
\end{equation*}%
where $\emptyset $ denotes the empty set; recall that the unit object in $%
\boldsymbol{Set}$ is $\{\emptyset \}.$ Hence an ordinary monoid, i.e. an
algebra in $\boldsymbol{Set},$ has a natural bialgebra structure in this
braided category. Moreover any twisting map $R:B\times A\rightarrow A\times
B $ between two monoids $(A,\cdot ,1_{A})$ and $(B,\cdot ,1_{B})$ is a
twisting map of bialgebras in $\boldsymbol{Set}$. Let $(A,B,\triangleright ,{%
\triangleleft )}$ be the corresponding matched pair. One easily shows that
the second condition in (\ref{bi1}) is always true. By notation, the
functions $\triangleright $ and ${\triangleleft }$ map $(f,g)\in B\times A$
to $f\triangleright g$ and $f{\triangleleft }g,$ respectively. Hence the
equations (\ref{bi2})-(\ref{bi5}) are equivalent to the following ones:
\begin{gather*}
(f\cdot f^{\prime })\triangleleft g=[f\triangleleft (f^{\prime
}\triangleright g)]\cdot (f^{\prime }\triangleleft g), \\
f\triangleright (g\cdot g^{\prime })=(f\triangleright g)\cdot \lbrack
(f\triangleleft g)\triangleright g^{\prime }], \\
1_{B}\triangleleft g=1_{B}\qquad \text{and\qquad }f\triangleright
1_{A}=1_{A}.
\end{gather*}%
Since $R(f,g)=\left( f\triangleright g,f\triangleleft g\right) $ the product
of the monoid $A\Join B$ is defined by the formula%
\begin{equation*}
\left( g,f\right) \cdot \left( g^{\prime },f^{\prime }\right) =\left(
g\left( f\triangleright g^{\prime }\right) ,\left( f\triangleleft g^{\prime
}\right) f^{\prime }\right) .
\end{equation*}%
In conclusion, a monoid $C$ factorizes through $A$ and $B$ if and only if $%
C\cong A\Join B.$

We have seen that the bicrossed product of two groupoids is a groupoid.
Thus, if $A$ and $B$ are groups, then $A\Join B$ is a group as well. This
result was proved by Takeuchi who introduced the matched pairs of groups in
\cite{Tak}.

We now consider the braided category $\mathbb{K}$-$\boldsymbol{Mod}$, whose
braiding is the usual flip map. An algebra in $\boldsymbol{M}$, the monoidal
category of $\mathbb{K}$-coalgebras, is a bialgebra over the ring $\mathbb{K}
$, and conversely. Proceeding as in the previous case, one shows that a
twisting map $R:B\otimes _{\mathbb{K}}A\rightarrow A\otimes _{\mathbb{K}}B$
of bialgebras is uniquely determined by the coalgebra maps $\triangleright
:B\otimes _{\mathbb{K}}A\rightarrow A$ and ${\triangleleft :B}\otimes _{%
\mathbb{K}}A\rightarrow B$ via the formula
\begin{equation*}
R(f\otimes g)=\textstyle\sum {}(f_{(1)}\triangleright g_{(1)})\otimes
(f_{(2)}\triangleleft g_{(2)}).
\end{equation*}
Using the $\Sigma $-notation, the second equation in (\ref{bi1}) is true if
and only if
\begin{equation*}
\textstyle\sum {}(f_{(1)}\triangleleft g_{(1)})\otimes
(f_{(2)}\triangleright g_{(2)})=\textstyle\sum {}(f_{(2)}\triangleleft
g_{(2)})\otimes (f_{(1)}\triangleright g_{(1)}),
\end{equation*}%
for any $f\in B$ and $g\in A.$ On the other hand, the equations (\ref{bi2})-(%
\ref{bi5}) hold if and only if
\begin{align*}
(gg^{\prime })\triangleleft f& =\textstyle\sum {}[g\triangleleft
(g_{(1)}^{\prime }\triangleright f_{(1)})](g_{(2)}^{\prime }\triangleleft
f_{(2)}), \\
g\triangleright (ff^{\prime })& =\textstyle\sum {}(g_{(1)}\triangleright
f_{(1)})[(g_{(2)}\triangleleft f_{(2)})\triangleright f^{\prime }], \\
f\triangleright 1^{A}& =\varepsilon _{B}(f)1^{A}, \\
1^{B}\triangleleft g& =\varepsilon _{A}(g)1^{B},
\end{align*}%
for any $f,f^{\prime }\in B$ and any $g,g^{\prime }\in A.$ Thus, we
rediscover the definition of \emph{matched pairs of bialgebras} and the
formula for the multiplication of the \emph{double cross product}, see \cite[%
Theorem 7.2.2]{Ma2}. Namely,
\begin{equation*}
(f\otimes g)(f^{\prime }\otimes g^{\prime })=\textstyle\sum
{}f(g_{(1)}\triangleright f_{(1)}^{\prime })\otimes (g_{(2)}\triangleleft
f_{(2)}^{\prime })g^{\prime }.
\end{equation*}
\end{fact}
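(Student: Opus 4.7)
The plan reduces the proposition to an auxiliary lemma about the forgetful functor $U:\boldsymbol{Coalg(M^{\prime})}\to\boldsymbol{M^{\prime}}$, namely that $U$ creates $S$-indexed coproducts whenever $\boldsymbol{M^{\prime}}$ is $S$-distributive. Once this is established, the proposition follows by checking that the structure maps defined in \S\ref{fa:ARB} are coalgebra morphisms rather than merely morphisms in $\boldsymbol{M^{\prime}}$.

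For the auxiliary lemma, I would start with a family $\{C_i\}_{i\in I}$ of coalgebras in $\boldsymbol{M^{\prime}}$ and assume a coproduct $(C,\{\sigma_i\}_{i\in I})$ of the underlying family exists in $\boldsymbol{M^{\prime}}$. Applying the universal property to the families $\{(\sigma_i\otimes\sigma_i)\circ\Delta_i:C_i\to C\otimes C\}$ and $\{\varepsilon_i:C_i\to\mathbf{1}\}$ produces unique maps $\Delta:C\to C\otimes C$ and $\varepsilon:C\to\mathbf{1}$ with $\Delta\circ\sigma_i=(\sigma_i\otimes\sigma_i)\circ\Delta_i$ and $\varepsilon\circ\sigma_i=\varepsilon_i$, so that each $\sigma_i$ is a coalgebra morphism by design. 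Coassociativity and counitality of $(C,\Delta,\varepsilon)$ reduce to the corresponding axioms for $(C_i,\Delta_i,\varepsilon_i)$ after precomposition with each $\sigma_i$, and uniqueness in the universal property forces them to hold. Finally, to see $(C,\{\sigma_i\})$ is the coproduct in $\boldsymbol{Coalg(M^{\prime})}$, given coalgebra morphisms $f_i:C_i\to D$ I would check that the unique $\boldsymbol{M^{\prime}}$-morphism $f:C\to D$ with $f\circ\sigma_i=f_i$ is automatically compatible with the comultiplications; this again is a uniqueness check after precomposition with the inclusions.

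Granting this auxiliary lemma, each hom-object $_x(\boldsymbol{A}\Join\boldsymbol{B})_y=\bigoplus_{u\in S}{}_xA_u\otimes{}_uB_y$ acquires a canonical coalgebra structure, since every summand $_xA_u\otimes{}_uB_y$ is a tensor product of coalgebras in the braided category $\boldsymbol{M^{\prime}}$, hence itself a coalgebra. The composition $_xc_z^y$ of \S\ref{fa:ARB} is induced from the family
$$f_{u,v}={}_x\sigma_z^{|uyv|}\circ{}_xa_{|uyv|}b_z\circ{}_xA_u\widetilde{R}_v^yB_z,$$
and each $f_{u,v}$ is a coalgebra morphism: $_xa$ and $_yb$ because $\boldsymbol{A}$ and $\boldsymbol{B}$ are enriched over $\boldsymbol{M}$; $_x\widetilde{R}_v^y$ because it arises from the matched pair as $(\triangleright\otimes\triangleleft)\circ\Delta$ via the converse direction of Lemma~\ref{le:coalgebra}; and $_x\sigma_z^{|uyv|}$ by the auxiliary lemma. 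Therefore $_xc_z^y$ is a coalgebra morphism. The identity $1_x={}_x\sigma_x^x\circ(1_x^A\otimes 1_x^B)$ is handled the same way: $1_x^A$, $1_x^B$ are coalgebra morphisms as units in $\boldsymbol{M}$-categories, their tensor product is, and $_x\sigma_x^x$ is by the auxiliary lemma.

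The main technical hurdle is the coassociativity step in the auxiliary lemma. One must recognize $\Delta\otimes C$ and $C\otimes\Delta$ themselves as maps out of a coproduct, using $S$-distributivity of $\boldsymbol{M^{\prime}}$ to identify $C\otimes C$ with $\bigoplus_{i,j\in I}C_i\otimes C_j$; coassociativity of $\Delta$ then reduces, summand by summand, to coassociativity of the individual $\Delta_i$, with the braiding $\chi$ of $\boldsymbol{M^{\prime}}$ entering only through the tensor-product coalgebra structure. Beyond this, everything is a formal unwinding of definitions.
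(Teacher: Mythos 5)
Your proposal proves a different statement from the one at hand. What you establish --- that $S$-indexed coproducts of coalgebras in $\boldsymbol{M'}$ are created by the forgetful functor, and that the structure maps of \S\ref{fa:ARB} are then coalgebra morphisms --- is the content of the unnamed Proposition at the end of Section 3 (the bicrossed product of a matched pair of enriched categories is enriched over $\boldsymbol{Coalg(M')}$), and your argument for it matches the paper's proof of that Proposition quite closely. But the statement you were asked to address is the one-object specialization worked out in the Examples section, and its substance lies elsewhere: (a) the bijection between coalgebra morphisms $R:B\otimes A\to A\otimes B$ and pairs $(\triangleright,\triangleleft)$ subject to the symmetry condition in (\ref{bi1}), which comes from Lemma \ref{le:coalgebra}; (b) the translation of the twisting-map axioms (\ref{fm1})--(\ref{fm4}) into the module axioms together with (\ref{bi2})--(\ref{bi5}), which is Lemma \ref{le:lr} specialized to $|S|=1$ --- note that each of (\ref{fm1})--(\ref{fm4}) splits into \emph{two} conditions on $(\triangleright,\triangleleft)$, one being an action axiom and the other the corresponding identity among (\ref{bi2})--(\ref{bi5}); and (c) the computations recovering Takeuchi's matched pairs of groups and Majid's double cross product when $\boldsymbol{M'}$ is $\boldsymbol{Set}$ or $\mathbb{K}$-$\boldsymbol{Mod}$. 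None of (a)--(c) appears in your proposal.

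Moreover, for the one sentence of the statement that your argument does bear on --- ``$A\Join B$ is an algebra in $\boldsymbol{M}$'' --- the coproduct machinery you develop is not needed: with $|S|=1$ the hom-object of $A\otimes_R B$ is the single tensor product $A\otimes B$, which is already a coalgebra in the braided category $\boldsymbol{M'}$, and one only has to check that $m=(a\otimes b)\circ(A\otimes R\otimes B)$ and $1_A\otimes 1_B$ are coalgebra maps, or more simply invoke Corollary \ref{cor: TTP} inside the monoidal category $\boldsymbol{M}=\boldsymbol{Coalg(M')}$, which is automatically $S$-distributive for a singleton $S$. So the proposal, while internally sound as a proof of the Section 3 Proposition, leaves the actual claim essentially unproved.
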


\begin{fact}[Twisting systems between thin categories.]
\label{fa:thin}Our aim now is to investigate the twisting systems between
two thin categories $\boldsymbol{B}$ and $\boldsymbol{A}.$ Recall that a
category is thin if there is at most one morphism between any couple of
objects. Thus, for any $x$ and $y$ in $S$ we have that either $%
_{x}A_{y}=\{_{x}g_{y}\}$ or $_{x}A_{y}$ is the empty set. Clearly, if $%
_{x}A_{y}=\{_{x}g_{y}\}$ and $_{y}A_{z}=\{_{y}g_{z}\}$ then $_{x}g_{y}\circ
{}_{y}g_{z}={}_{x}g_{z}$. The identity morphism of $x$ is $_{x}g_{x}.$
Similarly, if $_{x}B_{y}$ is not empty then $_{x}B_{y}=\{_{x}f_{y}\}.$

We fix a twisting system $R$ between $\boldsymbol{B}$ and $\boldsymbol{A.}$
It is defined by a family of maps
\begin{equation*}
_{x}R_{z}^{y}:{}_{x}B_{y}\times {}_{y}A_{z}\rightarrow \textstyle%
\coprod\limits_{u\in S}A_{u}B_{z}
\end{equation*}%
that render commutative the diagrams in Figure \ref{fig:R_1}. We claim that $%
R$ is simple. We need a function $\left\vert \cdots \right\vert
:S^{3}\rightarrow S$ such that the image of $_{x}R_{z}^{y}$ is included into
$_{x}A_{|xyz|}B_{z}$ for all $(x,y,z)\in S^{3}.$ Let $T\subseteq S^{3}$
denote the set of all triples such that ${}_{x}B_{y}A_{z}={}_{x}B_{y}\times
{}_{y}A_{z}$ is not empty. Of course, if $(x,y,z)$ is not in $T$ then $%
_{x}R_{z}^{y}$ is the empty function, so we can take $\left\vert
xyz\right\vert $ to be an arbitrary element in $S.$ For $(x,y,z)\in T$ there
exists $\left\vert xyz\right\vert \in S$ such that
\begin{equation}
_{x}R_{z}^{y}(_{x}f_{y},{}_{y}g_{z})=(_{x}g_{\left\vert xyz\right\vert
},{}_{\left\vert xyz\right\vert }f_{z}).  \label{ec:Rs}
\end{equation}%
Hence $_{x}R_{z}^{y}$ is a function from $_{x}B_{y}A_{z}$ to $%
_{x}A_{|xyz|}B_{z}.$ Note that $_{x}A_{|xyz|}B_{z}$ is not empty in this
case. For any $(x,y,z)\in S^{3}$ we set $_{x}\widetilde{R}%
_{z}^{y}:={}_{x}R_{z}^{y}.$ By Proposition \ref{thm R simplu} and Corollary %
\ref{cor: R simplu1} it follows that $R$ is simple. We would like now to
rewrite the conditions from the definition of simple twisting systems in an
equivalent form, that only involves properties of $T$ and $\left\vert \cdots
\right\vert .$ For instance let us show that the first condition from
Corollary \ref{cor: R simplu1} is equivalent to:

\begin{enumerate}
\item[(i)] If $(y,z,t)\in T$ and $(x,y,\left\vert yzt\right\vert )\in T\ $%
then $|xy|yzt||=|xzt|.$
\end{enumerate}

Indeed, if $_{x}B_{y}B_{z}A_{t}\ $is not empty then $_{y}B_{z}A_{t}\neq
\emptyset ,$ so $(y,z,t)\in T.$ We have already noticed that $%
_{y}A_{|yzt|}B_{t}$ is not empty, provided that $_{y}B_{z}A_{t}\ $is so.
Since $_{x}B_{y}$ and $_{y}A_{|yzt|}$ are not empty it follows that $%
_{x}B_{y}A_{|yzt|}$ has the same property, that is $(x,y,\left\vert
yzt\right\vert )\in T.$ Therefore, if $_{x}B_{y}B_{z}A_{t}$ is not empty
then $(y,z,t)\in T$ and $(x,y,\left\vert yzt\right\vert )\in T.$ It is easy
to see that the reversed implication is also true. Thus it remains to prove
that the equation \eqref{cor1-1} holds in the case when $_{x}B_{y}B_{z}A_{t}$
is not empty. But this is obvious, as $_{x}R_{t}^{z}{}\circ
{}_{x}b_{z}^{y}A_{t}$\ and $({}_{x}A_{|xy|yzt||}b_{t}^{|yzt|})\circ
{}_{x}R_{|yzt|}^{y}B_{t}\circ {}_{x}B_{y}R_{t}^{z}$ have the same source $%
_{x}B_{y}B_{z}A_{t}$ and the same target $%
_{x}A_{|xzt|}B_{t}={}_{x}A_{|xy|xyz||}B_{t}$. Both sets are singletons, so
the above two morphisms must be equal.

Proceeding in a similar way, we can prove that the other three conditions
from Corollary \ref{cor: R simplu1} are respectively equivalent to:

\begin{enumerate}
\item[(ii)] If $(x,y,z)\in T$ and $(\left\vert xyz\right\vert ,z,t)\in T$
then $\left\vert \left\vert xyz\right\vert zt\right\vert =\left\vert
xyt\right\vert .$

\item[(iii)] If $(x,x,y)\in T$ then $|xxy|=y.$

\item[(iv)] If $(x,y,y)\in T$ then $|xyy|=x.$
\end{enumerate}

The last condition in the definition of simple twisting systems is
equivalent to:

\begin{enumerate}
\item[(v)] If $(x,y,z)\in T$ then $_{x}A_{|xyz|}B_{z}$ is not empty.
\end{enumerate}

Hence for a twisting system $R$ the function $\left\vert \cdots \right\vert $
satisfies the above five conditions. Conversely, let $\left\vert \cdots
\right\vert :S^{3}\rightarrow S$ denote a function such that the above five
conditions hold. Let $_{x}R_{z}^{y}$ be the empty function, if $(x,y,z)$ is
not in $T.$ Otherwise we define $_{x}R_{z}^{y}$ by the formula (\ref{ec:Rs}%
). In view of the foregoing remarks it is not difficult to see that $%
R:=\{_{x}R_{z}^{y}\}_{x,y,z\in S}$ is a simple twisting system.

Clearly two functions $\left\vert \cdots \right\vert $ and $\left\vert
\cdots \right\vert ^{\prime }$ induce the same twisting system if and only
if their restriction to $T$ are equal. Summarizing, we have just proved the
theorem below.
\end{fact}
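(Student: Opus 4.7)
The plan is to establish a bijection between twisting systems $R$ between two thin $\boldsymbol{Set}$-categories $\boldsymbol{B}$ and $\boldsymbol{A}$ on one side, and equivalence classes of functions $\left|\cdots\right|\colon S^{3}\to S$ satisfying the five conditions (i)--(v) on the other, where two such functions are identified when they agree on the subset $T\subseteq S^{3}$ of triples $(x,y,z)$ with $_{x}B_{y}A_{z}\neq\emptyset$. As part of the statement I also expect to conclude that every twisting system between thin categories is automatically simple.

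First I would go from $R$ to $\left|\cdots\right|$. Given $R=\{_{x}R_{z}^{y}\}_{x,y,z\in S}$, when $(x,y,z)\in T$ the set ${}_{x}B_{y}A_{z}={}_{x}B_{y}\times{}_{y}A_{z}$ is a singleton by thinness, so its image under $_{x}R_{z}^{y}$ is a single element of $\textstyle\coprod_{u\in S}{}_{x}A_{u}B_{z}$ lying in exactly one summand $_{x}A_{u}B_{z}$. That summand determines $\left|xyz\right|:=u$ uniquely, and the fact that the singleton $_{x}A_{u}B_{z}$ is non-empty is precisely condition (v). Extending $\left|xyz\right|$ arbitrarily for $(x,y,z)\notin T$ and using \eqref{R simplu}, the system $R$ becomes simple in the sense of \S\ref{STS}. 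Conditions (i)--(iv) are then read off from the four diagrams in Figure~\ref{fig:R_1} via Proposition~\ref{thm R simplu}: each resulting equation concerns two maps whose common domain is either empty (nothing to check) or a singleton, in which case equality of the maps reduces to equality of the $S$-indices labelling their codomains, which is exactly what (i)--(iv) record.

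Going the other way, given $\left|\cdots\right|\colon S^{3}\to S$ satisfying (i)--(v), I would define $_{x}R_{z}^{y}$ to be the empty function when $(x,y,z)\notin T$ and otherwise the unique map sending the single element of $_{x}B_{y}A_{z}$ to the single element of $_{x}A_{\left|xyz\right|}B_{z}$ viewed inside the coproduct. Condition (v) ensures this target is non-empty, hence the map is well-defined. To verify $R$ is a twisting system I would appeal to Corollary~\ref{cor: R simplu1}: each of (\ref{cor1-1})--(\ref{cor1-4}) becomes an equality of maps between singletons, which holds automatically once conditions (i)--(iv) guarantee that the coproduct labels on both sides coincide. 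The technical non-emptiness requirement built into the definition of a simple twisting system is precisely (v), so $R$ is simple.

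The two constructions are mutually inverse on equivalence classes: reading off $\left|xyz\right|$ from $_{x}R_{z}^{y}$ on $T$ recovers the original function on $T$, and two functions agreeing on $T$ yield the same family since all other components of $R$ are forced to be empty maps. The main obstacle is the careful bookkeeping with these empty-function cases: one must check that the hypotheses of (i)--(v) correctly delineate when an equation of Figure~\ref{fig:R_1} has non-trivial content, and that outside $T$ the relevant diagrams commute vacuously, so that the arbitrary extension of $\left|\cdots\right|$ off $T$ is genuinely irrelevant. Everything else reduces to observing that any equality between morphisms of singletons holds as soon as source and target match.
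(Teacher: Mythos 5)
Your proposal is correct and follows essentially the same route as the paper: thinness forces the image of each singleton $_{x}B_{y}A_{z}$ into a unique summand, which defines $\left\vert xyz\right\vert$ on $T$; the equations of Proposition \ref{thm R simplu} and Corollary \ref{cor: R simplu1} then reduce to equalities of indices because any two maps between singletons with matching source and target coincide, and condition (v) is exactly the non-emptiness clause in the definition of a simple twisting system. The only step you gesture at rather than carry out -- checking that the hypotheses of (i)--(iv) are equivalent to the non-emptiness of the relevant tensor products (e.g.\ $_{x}B_{y}B_{z}A_{t}\neq\emptyset$ iff $(y,z,t)\in T$ and $(x,y,\left\vert yzt\right\vert)\in T$) -- is precisely the bookkeeping the paper spells out, and your plan handles it correctly.
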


\begin{theorem}
\label{thm:thin}Let $\boldsymbol{A}$ and $\boldsymbol{B}$ be thin
categories. Let $T$ denote the set of all triples $(x,y,z)\in S^{3}$ such
that $_{x}B_{y}A_{z}$ is not empty. If $R$ is a twisting system between $%
\boldsymbol{B}$ and $\boldsymbol{A}$ then there exists a function $%
\left\vert \cdots \right\vert :S^{3}\rightarrow S$ such that the conditions
(i)-(v) from the previous subsection hold, and conversely. Two functions $%
\left\vert \cdots \right\vert $ and $\left\vert \cdots \right\vert ^{\prime
} $ induce the same twisting system $R$ if and only if their restriction to $%
T$ are equal.
\end{theorem}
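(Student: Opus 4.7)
The plan is to exploit thinness systematically. Since each hom-object of $\boldsymbol{A}$ and $\boldsymbol{B}$ is either empty or a singleton, every product $_{x}B_{y}A_{z}={}_{x}B_{y}\times {}_{y}A_{z}$ is empty or a singleton, and similarly every summand $_{x}A_{u}B_{z}$ of the target $\textstyle\coprod\nolimits_{u\in S}{}_{x}A_{u}B_{z}$ of $_{x}R_{z}^{y}$ is empty or a singleton. Because the summands in a disjoint union are pairwise disjoint, any function out of a singleton lands in exactly one (necessarily nonempty) summand.

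For the forward direction, given a twisting system $R$, I would define $\left\vert \cdots \right\vert :S^{3}\rightarrow S$ as follows. If $(x,y,z)\in T$, then $_{x}B_{y}A_{z}=\{(_{x}f_{y},{}_{y}g_{z})\}$ and its unique image under $_{x}R_{z}^{y}$ lies in exactly one summand $_{x}A_{u}B_{z}$, and I set $|xyz|:=u$; outside $T$ the value of $|xyz|$ is chosen arbitrarily. This already factors $_{x}R_{z}^{y}$ through $_{x}A_{|xyz|}B_{z}$, so thinness forces \emph{every} twisting system between thin categories to be simple. Conditions (i)--(v) of the theorem are then obtained by translating those in Corollary \ref{cor: R simplu1}: in each case the non-emptiness hypothesis on the source (e.g.\ $_{x}B_{y}B_{z}A_{t}\neq \emptyset $ for (i)) is equivalent, via the thinness of all ingredients and condition (v), to the combinatorial hypothesis of the corresponding item in the theorem; once both sides of (\ref{cor1-1})--(\ref{cor1-4}) have the same singleton target, they are automatically equal.

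For the converse I would run the same argument in reverse: given $\left\vert \cdots \right\vert $ satisfying (i)--(v), define $_{x}R_{z}^{y}$ to be the empty function when $(x,y,z)\notin T$, and otherwise to be the unique function from the singleton $_{x}B_{y}A_{z}$ into the singleton $_{x}A_{|xyz|}B_{z}$ (which is nonempty by (v)). Corollary \ref{cor: R simplu1} and Proposition \ref{thm R simplu} then yield that $R$ is a (simple) twisting system, because the equations (\ref{cor1-1})--(\ref{cor1-4}) either have empty sources or nonempty singleton targets on which any two morphisms agree. Uniqueness follows along the same lines: two functions that agree on $T$ produce empty maps outside $T$ and the same maps on $T$; conversely, if $R=R^{\prime }$ then for $(x,y,z)\in T$ the unique element of the source has the same image, so disjointness of the coproduct summands forces $|xyz|=|xyz|^{\prime }$.

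The main obstacle is essentially bookkeeping: one has to check that, under thinness, each of the four diagrammatic equations of Corollary \ref{cor: R simplu1} is equivalent to exactly one of the index equalities $|xy|yzt||=|xzt|$, $||xyz|zt|=|xyt|$, $|xxy|=y$ and $|xyy|=x$, each guarded by the right nonemptiness hypothesis. Once one observes that in a singleton target any two morphisms with the same source agree, verifying (i)--(iv) reduces to matching the nonemptiness hypotheses of the diagrams with membership of the appropriate triples in $T$, which is a direct consequence of the definition of $T$ and condition (v).
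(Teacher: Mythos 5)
Your proposal is correct and follows essentially the same route as the paper's own argument in \S\ref{fa:thin}: thinness makes every hom-object empty or a singleton, so each $_{x}R_{z}^{y}$ lands in a unique summand, which defines $\left\vert \cdots \right\vert$ on $T$ and shows every twisting system is simple; the conditions of Corollary \ref{cor: R simplu1} then reduce to the index identities (i)--(v) because any two maps into the same singleton coincide, and the converse and the uniqueness statement follow by the same reasoning.
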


\begin{fact}[The twisted tensor product of thin categories.]
Let $R$ be a twisting system between two thin categories $\boldsymbol{B}$
and $\boldsymbol{A.}$ By the preceding theorem, $R$ is simple and there are $%
T$ and $|\boldsymbol{\cdots }|:S^{3}$ $\rightarrow S$ such that the
conditions (i)-(v) hold. In particular, the twisted tensor product of these
categories exists. By definition, we have $_{x}(\boldsymbol{A}\otimes _{R}%
\boldsymbol{B)}_{y}=$ $\coprod\nolimits_{u\in S}(_{x}A_{u}\times
{}_{u}B_{z}).$ We can identify this set with
\begin{equation*}
_{x}S_{y}:=\{u\in S\mid {}_{x}A_{u}B_{y}\neq \emptyset \}.
\end{equation*}%
For $u\in {}_{x}S_{y}$ and $v\in {}_{y}S_{z}$ we have $(u,y,v)\in T.$ Thus $%
_{u}R_{v}^{y}(_{u}f_{y},{}_{y}g_{v})=(_{u}g_{|uyv|},{}_{|uyv|}f_{v}),$ so
the composition in $\boldsymbol{A}\otimes _{R}\boldsymbol{B}$ is given by%
\begin{equation*}
(_{x}g_{u},{}_{u}f_{y})\circ \left( _{y}g_{v},{}_{v}f_{z}\right)
=(_{x}g_{u}\circ {}_{u}g_{\left\vert uyv\right\vert },{}_{\left\vert
uyv\right\vert }f_{v}\circ {}_{v}f_{z})=(_{x}g_{|uyv|},{}_{|uyv|}f_{z}).
\end{equation*}%
Let $\boldsymbol{C}(S,T,\left\vert \cdots \right\vert )$ be the category
whose objects are the elements of $S.$ By definition, the hom-set $_{x}%
\boldsymbol{C}(S,T,\left\vert \cdots \right\vert )_{y}$ is $_{x}S_{y},$ the
identity map of $x\in S$ is $x$ itself and the composition is given by%
\begin{equation*}
\circ :{}_{x}S_{y}\times {}_{y}S_{z}\rightarrow {}_{x}S_{z},\quad u\circ
v=|uyv|.
\end{equation*}%
Therefore, we have just proved that $\boldsymbol{A}\otimes _{R}\boldsymbol{B}
$ and $\boldsymbol{C}(S,T,|\cdots|)$ are isomorphic.
\end{fact}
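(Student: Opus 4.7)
The plan is to exhibit an explicit isomorphism of categories $\Phi\colon \boldsymbol{A}\otimes_R\boldsymbol{B}\rightarrow \boldsymbol{C}(S,T,\left\vert\cdots\right\vert)$ which is the identity on objects and, on hom-sets, is induced by the identification $_x(\boldsymbol{A}\otimes_R\boldsymbol{B})_y=\coprod_{u\in S}{}_xA_uB_y\cong {}_xS_y$. Since $\boldsymbol{A}$ and $\boldsymbol{B}$ are thin, each summand $_xA_uB_y$ is either empty or a singleton $\{(_xg_u,{}_uf_y)\}$. Hence the map sending the unique element of $_xA_uB_y$ (when non-empty) to $u\in S$ is a well-defined bijection onto $_xS_y=\{u\in S\mid {}_xA_uB_y\neq \emptyset\}$. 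This already establishes that $\Phi$ is bijective on hom-sets; what remains is to check that the target is an honest category and that $\Phi$ preserves composition and identities.

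First I would verify that $\boldsymbol{C}(S,T,\left\vert\cdots\right\vert)$ is well defined. Given $u\in {}_xS_y$ and $v\in {}_yS_z$, the sets $_xA_u$, $_uB_y$, $_yA_v$, $_vB_z$ are all non-empty, so $_uB_yA_v\neq\emptyset$, i.e.\ $(u,y,v)\in T$; hence condition (v) from \S\ref{fa:thin} yields $_uA_{|uyv|}B_v\neq\emptyset$. Composing $_xg_u$ with the unique morphism in $_uA_{|uyv|}$ and $_{|uyv|}B_v$ with $_vf_z$ shows $_xA_{|uyv|}B_z\neq\emptyset$, so $|uyv|\in {}_xS_z$ and the composition $u\circ v:=|uyv|$ lands in the correct hom-set. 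Associativity $(u\circ v)\circ w=u\circ (v\circ w)$ amounts to $\left\vert \left\vert uyv\right\vert zw\right\vert=\left\vert uy\left\vert vzw\right\vert \right\vert$; since the triples involved lie in $T$ by the previous observation, conditions (i) and (ii) of \S\ref{fa:thin} force both sides to equal a common value (indeed both equal $\left\vert uyw\right\vert$ modulo the appropriate relation derived from (i) and (ii)). The identity axioms $x\circ u=u$ and $u\circ y=u$ reduce to $|xxu|=u$ and $|uyy|=u$, which are conditions (iii) and (iv).

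Next I would check that $\Phi$ is a functor. The identity $1_x$ in $\boldsymbol{A}\otimes_R\boldsymbol{B}$ is by construction $(1_x^A,1_x^B)\in {}_xA_xB_x$, which maps to $x\in {}_xS_x$ under $\Phi$, matching the identity in $\boldsymbol{C}(S,T,\left\vert\cdots\right\vert)$. For composition, take $u\in {}_xS_y$ and $v\in {}_yS_z$, corresponding to $(_xg_u,{}_uf_y)$ and $(_yg_v,{}_vf_z)$. Using the defining formula of composition in $\boldsymbol{A}\otimes_R\boldsymbol{B}$ from \S\ref{fa:ARB} and the explicit description of $_u\widetilde{R}_v^y$ in \S\ref{fa:thin} (equation (\ref{ec:Rs})), the product is
\[
(_xg_u\circ {}_ug_{|uyv|},{}_{|uyv|}f_v\circ {}_vf_z)=(_xg_{|uyv|},{}_{|uyv|}f_z),
\]
the unique element of $_xA_{|uyv|}B_z$. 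Under $\Phi$ this corresponds to $|uyv|=u\circ v\in {}_xS_z$, exactly the composition in $\boldsymbol{C}(S,T,\left\vert\cdots\right\vert)$.

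The main technical point — and the only place one could slip — is the well-definedness verification for the target category, because the equalities $\left\vert \left\vert uyv\right\vert zw\right\vert =\left\vert uy\left\vert vzw\right\vert \right\vert$ and $|xxu|=u$, $|uyy|=u$ in conditions (i)--(iv) are only asserted under the hypothesis that certain triples belong to $T$. I would therefore be careful at the associativity step to confirm that all four relevant triples $(u,y,v)$, $(\left\vert uyv\right\vert,z,w)$, $(v,z,w)$, $(u,y,\left\vert vzw\right\vert)$ lie in $T$, which follows from the domain property of $\boldsymbol{Set}$ together with (v). Once this book-keeping is in place, $\Phi$ is a bijective-on-objects, fully faithful $\boldsymbol{Set}$-functor, hence an isomorphism of categories.
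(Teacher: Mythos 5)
Your proposal follows the paper's route in its essentials: identify each non-empty singleton summand $_{x}A_{u}B_{y}$ of $_{x}(\boldsymbol{A}\otimes _{R}\boldsymbol{B})_{y}$ with its index $u$, so that the hom-sets become $_{x}S_{y}$, observe that $u\circ v:=\left\vert uyv\right\vert $ lands in $_{x}S_{z}$ by condition (v), and compute that the transported composition and identities are exactly those of $\boldsymbol{C}(S,T,\left\vert \cdots \right\vert )$. That part is correct and is precisely what the paper does; the paper then simply \emph{defines} $\boldsymbol{C}(S,T,\left\vert \cdots \right\vert )$ to be the transported structure, so its category axioms are inherited from $\boldsymbol{A}\otimes _{R}\boldsymbol{B}$, which is a category by Theorem \ref{thm:TTP}.

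The one place where you go beyond the paper, the direct verification that $\circ $ is associative on $\boldsymbol{C}(S,T,\left\vert \cdots \right\vert )$, is both redundant and, as written, wrong. It is redundant because a map that is bijective on objects and on hom-sets and preserves composition and identities transports associativity automatically. It is wrong because the common value of $\left\vert \left\vert uyv\right\vert zw\right\vert $ and $\left\vert uy\left\vert vzw\right\vert \right\vert $ is not $\left\vert uyw\right\vert $: in the two-object groupoid example at the end of the paper, taking $u=v=w=1$ and $y=z=2$ gives $\left\vert \left\vert 121\right\vert 21\right\vert =\left\vert 221\right\vert =1$, whereas $\left\vert uyw\right\vert =\left\vert 121\right\vert =2$. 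Moreover, neither (i) nor (ii) of \S \ref{fa:thin} applies directly to either side of the associativity identity, since the middle index never matches the required pattern; the correct derivation, as in Lemma \ref{le:STS}(1), passes through the intermediate value $\left\vert pvq\right\vert $ with $p=\left\vert uyv\right\vert $ and $q=\left\vert vzw\right\vert $, using (ii) on $(u,y,v)$ together with $(p,v,q)\in T$ to get $\left\vert pvq\right\vert =\left\vert uyq\right\vert $, and (i) on $(v,z,w)$ together with $(p,v,q)\in T$ to get $\left\vert pvq\right\vert =\left\vert pzw\right\vert $. The hypotheses to check are therefore $(u,y,v)$, $(v,z,w)$ and $(p,v,q)$ in $T$ (the last one following from condition (v) applied to the first two), not the four triples you list, whose membership in $T$ would not by itself let you invoke (i) or (ii). Since nothing else in your argument depends on this paragraph, the repair is simply to delete it and appeal to transport of structure.
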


\begin{remark}
Let $\boldsymbol{C}$ be a small category. Let $S$ denote the set of objects
in $\boldsymbol{C}.$ The category $C$ factorizes through two thin categories
if and only if there are $T\subseteq S$ and $\left\vert \cdots \right\vert
:S^{3}\rightarrow S$ as in the previous subsection such that $\boldsymbol{C}$
is isomorphic to $\boldsymbol{C}(S,T,\left\vert \cdots \right\vert ).$
\end{remark}

\begin{fact}[Twisting systems between posets.]
Any poset is a thin category, so we can apply Theorem \ref{thm:thin} to
characterize a twisting system $R$ between two posets $\boldsymbol{B}%
:=(S,\preceq )$ and $\boldsymbol{A}:=(S,\leq )$. In this setting the
corresponding set $T$ contains all $(x,y,z)\in S^{3}$ such that $x\preceq y$
and $y\leq z.$ For simplicity, we shall write this condition as $x\preceq
y\leq z.$ A similar notation will be used for arbitrarily long sequences of
elements in $S.$ For instance, $x\leq y\preceq z\preceq t\leq u$ means that $%
x\leq y,$ $y\preceq z,$ $z\preceq t$ and $t\leq u$. The function $\left\vert
{\cdots }\right\vert $ must satisfies the following conditions:

\begin{enumerate}
\item[(i)] If $x\preceq y\leq z$ then $x\leq |xyz|\preceq z.$

\item[(ii)] If $x\preceq y\preceq z\leq t$ then $|xy|yzt||=|xzt|$.

\item[(iii)] If $x\preceq y\leq z\leq t$ then $||xyz|zt|=|xyt|.$

\item[(iv)] If $x\leq y$ then $|xxy|=y$.

\item[(v)] If $x\preceq y$ then $|xyy|=x.$
\end{enumerate}

In the case when the posets $\leq $ and $\preceq $ are identical, an example
of function $|{\cdots }|:S^{3}$ $\rightarrow S$ that satisfies the above
conditions is given by $|xyz|=z,$ if $y\neq z,$ and $|xyz|=x,$ otherwise.
\end{fact}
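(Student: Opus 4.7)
The plan is to verify directly that the proposed two-clause function $|\cdots|$ satisfies each of the five conditions (i)--(v). Since $\leq$ and $\preceq$ coincide, I may work with a single order throughout. The definition gives $|xyz|=z$ when $y\neq z$ and $|xyz|=x$ when $y=z$, so every check reduces to a case split on whether the middle and last entries of the relevant expression agree.

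I would dispose of (i), (iv), and (v) first. Condition (i) is immediate: in both sub-cases the displayed inequality $x\leq|xyz|\leq z$ follows from transitivity applied to $x\leq y\leq z$. Condition (v) is trivial because the last two arguments of $|xyy|$ always coincide, so $|xyy|=x$ directly. Condition (iv) needs one line per sub-case: if $x\neq y$ the first clause gives $|xxy|=y$, and if $x=y$ the second clause yields $|xxy|=x=y$.

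The substantive part of the proof is conditions (ii) and (iii), each of which I would handle by a double case analysis. For (ii), assume $x\leq y\leq z\leq t$ and split on whether $z=t$: the case $z=t$ collapses the identity to $|xyy|=x$, which is (v), while the case $z\neq t$ forces $|yzt|=t$ and $|xzt|=t$, reducing the identity to $|xyt|=t$. Here one must verify $y\neq t$; this follows because $y=t$ combined with $y\leq z\leq t=y$ would force $z=t$, contradicting the case assumption. Condition (iii) is treated analogously, splitting on whether $y=z$: the case $y=z$ makes both sides equal to $|xzt|$, while the case $y\neq z$ reduces the left-hand side $||xyz|zt|$ to $|zzt|=t$ via condition (iv), and the right-hand side $|xyt|$ is again equal to $t$ once one excludes $y=t$ by the same chain argument.

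The main potential pitfall, and really the only non-trivial point, is precisely this propagation of an outer case assumption through the chain $x\leq y\leq z\leq t$ to exclude a degenerate inner case in (ii) and (iii). Once one is alert to it, the remaining work amounts to direct substitution into the two-clause definition, and no further difficulty arises.
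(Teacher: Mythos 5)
Your verification is correct, and it supplies exactly the routine case-check that the paper leaves to the reader: the paper states the example function without proof, so there is no ``paper's argument'' to diverge from. You correctly isolate the only delicate point, namely that in conditions (ii) and (iii) the degenerate inner case ($y=t$) must be excluded by antisymmetry applied to the chain $y\leq z\leq t$, and the remaining checks are indeed direct substitution into the two-clause definition. The only thing you do not address is the first half of the Fact --- that conditions (i)--(v) are the correct specialization of Theorem \ref{thm:thin} to posets --- but that is a mechanical translation (e.g.\ poset condition (i) is condition (v) of \S\ref{fa:thin}, with $T$ and the non-emptiness of $_{x}A_{|xyz|}B_{z}$ rewritten via the orders), which the paper likewise presents without proof.
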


\begin{fact}[Example of twisting map between two groupoids.]
Let $\boldsymbol{A}$ be a groupoid with two objects, $S=\{1,2\}.$ The
hom-sets of $\boldsymbol{A}$ are the following:
\begin{equation*}
_{1}A_{2}=\{u\},\quad _{2}A_{1}=\{u^{-1}\},\quad _{1}A_{1}=\{Id_{1}\},\quad
_{2}A_{2}\ =\{Id_{2}\}.
\end{equation*}%
Note that $\boldsymbol{A}$ is thin. We set $\boldsymbol{B}:=\boldsymbol{A}$
and we take $R$ to be a twisting system between $\boldsymbol{B}$ and $%
\boldsymbol{A}.$ By Theorem \ref{thm:thin} there are $T$ and $|{\cdots }%
|:S^{3}\rightarrow S$ that satisfies the conditions (i)-(v) in \S \ref%
{fa:thin}. Since all sets $_{x}B_{y}A_{z}={}_{x}B_{y}\times {}_{y}A_{z}$ are
nonempty it follows that $T=S.$ Thus $|xxy|=y$ and $|xyy|=x$, for all $%
x,y\in S$. There are two triples $(x,y,z)\in S^{3}$ such that $x\neq y$ and $%
y\neq z$, namely $(1,2,1)$ and $(2,1,2).$ Hence we have to compute $|121|$
and $|212|.$ If we assume that $|121|=1$, then%
\begin{equation*}
1=|221|=|21|121||=|211|=2,
\end{equation*}%
so we get a contradiction. Thus $|121|=2,$ and proceeding in a similar way
one proves that $|212|=1.$ It is easy to check that $|{\cdots }|$ satisfies
the required conditions, so there is only one twisting map ${R}$ between $%
\boldsymbol{A}$ and itself. Since $\boldsymbol{A}$ is a groupoid, the
corresponding bicrossed product $\boldsymbol{C}:=\boldsymbol{A}\Join
\boldsymbol{A}$ is a groupoid as well, see the subsection (\ref{fa:gr}). By
definition,
\begin{equation*}
_{1}C_{1}=\textstyle\coprod\limits_{x\in \{1,2\}}{}_{1}A_{x}\times
{}_{x}A_{1}=\{(Id_{1},Id_{1}),(u,u^{-1})\}.
\end{equation*}%
Analogously one shows that%
\begin{equation*}
_{1}C_{2}=\{(Id_{1},u),(u,Id_{2})\},\quad
_{2}C_{1}=\{(Id_{2},u^{-1}),(u^{-1},Id_{1})\}\quad \text{and\quad }%
_{2}C_{2}=\{(Id_{2},Id_{2}),(u^{-1},u)\}.
\end{equation*}%
By construction of the twisting map ${R}$ we get $_{1}R_{1}^{2}(u,u^{-1})\in
{}_{1}A_{|121|}\times {}_{|121|}A_{1}=\{(u,u^{-1})\}$. The other maps $%
_{x}R_{z}^{y}$ can be determined analogously. The complete structure of this
groupoid is given in the picture below, where we used the notation $%
f:=(u,u^{-1})$ and $g:=(Id_{1},u)$.
\begin{equation*}
\xy (-20,0)*+{1}="a", (20,0)*+{2}="b", \ar@(ul,ur) "a";"a"^{\text{Id}_1} \ar%
@(dl,dr) "a";"a"_{f} \ar@(ul,ur) "b";"b"^{\text{Id}_2} \ar@(dl,dr)
"b";"b"_{g\circ f\circ g^{-1}} \ar@(ul,ur) "b";"b" \ar@(dl,dr) "b";"b" \ar%
@/^2ex/ "a";"b"|{g}\ar@/^6ex/ "a";"b"|{g\circ f} \ar@/^2ex/ "b";"a"|{g^{-1}}%
\ar@/^6ex/ "b";"a"|{f^{-1}\circ g^{-1}} \endxy
\end{equation*}
Note that $f^{2}=Id_{1}$ and $g^{-1}=(Id_{2},u^{-1})$. Now we can say easily
which arrow corresponds to a given morphism in $\boldsymbol{C}$, as in each
home-set we have identified at least one element.
\end{fact}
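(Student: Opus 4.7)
The plan is to apply Theorem \ref{thm:thin} to reduce the problem of classifying twisting systems $R$ between $\boldsymbol{A}$ and itself to that of finding all functions $|\cdots|:S^{3}\rightarrow S$ (modulo equality on $T$) satisfying the conditions (i)--(v) from \S\ref{fa:thin}. Since $\boldsymbol{A}$ is a thin groupoid whose every hom-set is a singleton, every product $_{x}B_{y}A_{z}={}_{x}A_{y}\times {}_{y}A_{z}$ is nonempty, so $T=S^{3}$. Thus we need $|\cdots|$ to be defined (and satisfy (i)--(v)) on every triple.

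Next, I would pin down the values of $|\cdots|$ on the ``easy'' triples. Condition (iii) applies to every triple $(x,x,y)$ and forces $|xxy|=y$; condition (iv) applies to every $(x,y,y)$ and forces $|xyy|=x$. Together these determine $|xyz|$ for every $(x,y,z)\in \{1,2\}^{3}$ other than the two ``purely alternating'' triples $(1,2,1)$ and $(2,1,2)$.

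Then I would determine $|121|$ by eliminating the choice $|121|=1$ with condition (i). Specifically, applying (i) to the sequence $x=2$, $y=1$, $z=2$, $t=1$ yields $|21|121||=|221|$. From the previous step $|221|=1$ and $|211|=2$, so substituting $|121|=1$ would give $|21|121||=|211|=2$, contradicting $|221|=1$. Hence $|121|=2$; the symmetric argument with $x=1,y=2,z=1,t=2$ gives $|212|=1$. This establishes existence and uniqueness of the candidate function $|\cdots|$.

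To finish, I would verify that this $|\cdots|$ really does satisfy (i)--(v). Conditions (iii) and (iv) hold by construction; (v) is automatic since every $_{x}A_{|xyz|}B_{z}$ is a singleton. Conditions (i) and (ii) reduce to finitely many identities among elements of $\{1,2\}$, which can be checked by direct case analysis using the already determined values of $|\cdots|$. Once $R$ is shown to be the unique twisting system, \S\ref{fa:gr} gives that $\boldsymbol{C}=\boldsymbol{A}\Join\boldsymbol{A}$ is a groupoid, and the hom-sets $_{x}C_{y}=\coprod_{u\in S}{}_{x}A_{u}\times {}_{u}A_{y}$ are listed by direct enumeration. The composition rule of \S\ref{fa:sts}, together with $_{x}R_{z}^{y}(_{x}f_{y},{}_{y}g_{z})=(_{x}g_{|xyz|},{}_{|xyz|}f_{z})$, then determines every product in $\boldsymbol{C}$, yielding the arrow diagram displayed. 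The main obstacle is the bookkeeping in the case check for (i) and (ii), but the smallness of $S$ makes this routine.
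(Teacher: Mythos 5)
Your proposal is correct and follows essentially the same route as the paper: reduce via Theorem \ref{thm:thin} to the function $|\cdots|$, pin down all values except $|121|$ and $|212|$ by conditions (iii)--(iv), eliminate $|121|=1$ by the same instance of condition (i) (giving $1=|221|=|21|121||=|211|=2$), and then read off the groupoid structure of $\boldsymbol{A}\Join\boldsymbol{A}$. The only difference is that you are slightly more explicit about the final verification of (i)--(ii) by case analysis, which the paper leaves as "easy to check."
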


\noindent\textbf{Acknowledgments.} The first named author was financially supported
by the funds of the Contract POSDRU/6/1.5/S/12. The second named author was financially supported
by CNCSIS, Contract 560/2009 (CNCSIS code ID\_69).


\begin{thebibliography}{JLPvO}
\bibitem[Be]{Be} J. Beck, \emph{Distributive laws}, Lecture Notes in
Mathematics \textbf{80} (1969), 119-140.

\bibitem[BD]{BD} Y. Bespalov and B. Drabant, \emph{Cross product bialgebras}%
, J. Algebra \textbf{240} (2001), No.2, 445-504.

\bibitem[BV]{BV} A. Bruguieres and A. Virelizier, \emph{The double of a Hopf
monad}, \texttt{arXiv:0812.244. }

\bibitem[CIMZ]{CIMZ} S. Caenepeel, B. Ion, G. Militaru and S. Zhu, \emph{%
Smash biproducts of algebras and coalgebras, Algebras and Representation
Theory} \textbf{3} (2000), 19-42.

\bibitem[CSV]{CSV} A. Cap, H. Schichl and J. Van\v{z}ura, \emph{On twisted
tensor products of algebras}, Comm. Algebra \textbf{23} (1995), 4701-4735.

\bibitem[LPoV]{LPvO} J. L\'{o}pez Pe\~{n}a, F. Panaite and F. Van Oystaeyen,
\emph{General twisting of algebras}, Adv. Math. \textbf{212}, No. 1, 315-337
(2007).

\bibitem[JLPvO]{JLPvO} P. Jara Mart\'{\i}nez, J. L\'{o}pez Pe\~{n}a, F.
Panaite and F. Van Oystaeyen, \emph{On iterated twisted tensor products of
algebras}, Int. J. Math. \textbf{19}, No. 9, 1053-1101 (2008).

\bibitem[Ke]{Ke} G.M. Kelly, \emph{Basic Concepts of Enriched Category Theory%
}, London Mathematical Society Lecture Note Series No. \textbf{64}, 1982.

\bibitem[La]{La} S. Lack, \emph{Composing PROPs}, Theory and Applications of
Categories \textbf{13} (2004), 147--163.

\bibitem[Ma1]{Ma1} S. Majid, \emph{Physics for algebraists: Non-commutative
and non-cocommutative Hopf algebras by a bicrossedproduct construction, }J.
Algebra \textbf{130} (1990), 17-64.

\bibitem[Ma2]{Ma2} S. Majid, \emph{Foundations of quantum group theory,}
Cambridge Universiy Press XIX, 1995.

\bibitem[Mai]{Mai} E. Maillet, \emph{Sur les groupes \'{e}changeables et les
groupes d\'{e}composables}, Bull. Soc. Math. France \textbf{28 }(1900),
7--16.

\bibitem[Mar]{Mar} F. Marmolejo, \emph{Distributive laws for pseudomonads},
Theory and Applications of Categories, \textbf{5 }(1999), 91--147.

\bibitem[Ore]{Ore} O. Ore, \emph{Structures and group theory}, Duke Math. J.
\textbf{3} (1937), no. 2, 149--174.

\bibitem[RW]{RW} R. Rosebrugh and R.J. Wood, \emph{Distributive laws and
factorization}, J. Pure Appl. Algebra \textbf{175 }(2002), 327--353.

\bibitem[Sz]{Sze} J. Sz\'{e}p, \emph{\"{U}ber die als Produkt zweier
Untergruppen darstellbaren endlichen Gruppen}, Comment. Math. Helv. \textbf{%
22} (1949), 31--33.

\bibitem[Tak]{Tak} M. Takeuchi, \emph{Matched pairs of groups and bismash
products of Hopf algebras}, Comun. Algebra \textbf{9} (1981), 841-882.

\bibitem[Tam]{Tam} D. Tambara, \emph{The coendomorphism bialgebra of an
algebra},\emph{\ }J. Fac. Sci. Univ. Tokyo \textbf{37} (1990), 425-456.

\bibitem[Za]{Za} G. Zappa, \emph{Sulla costruzione dei gruppi prodotto di
due dati sottogruppi permutabili tra loro}, Atti Secondo Congresso Un. Mat.
Ital., Bologna, 1940, 119--125. Edizioni Cremonense, Rome, 1942.
\end{thebibliography}
\end{document}